\documentclass[a4paper,11pt,reqno,twoside]{amsart}

\usepackage{amsmath}
\usepackage{amsfonts}
\usepackage{amssymb}
\usepackage{amsthm}
\usepackage{color}
\usepackage{ifpdf}
\usepackage{array}
\usepackage{url}
\usepackage{multirow,bigdelim}
\usepackage{ascmac}
\usepackage{mathrsfs}
\usepackage[all]{xy} 
\usepackage{graphicx}
\usepackage{ulem}

\theoremstyle{oupplain}
\newtheorem{theorem}{Theorem}[section]
\newtheorem{proposition}[theorem]{Proposition}
\newtheorem{lemma}[theorem]{Lemma}
\newtheorem{corollary}[theorem]{Corollary}
\theoremstyle{oupdefinition}

\theoremstyle{oupremark}

\numberwithin{equation}{section}

\newcommand*{\C}{\mathbb{C}}
\newcommand*{\R}{\mathbb{R}}
\newcommand*{\Z}{\mathbb{Z}}

\newcommand*{\D}{\mathscr{D}}

\newcommand{\comment}[1]{}
\title[Weil's quadratic form via the screw function]%
      {Weil's quadratic form \\ via the screw function} 
\author[M. Suzuki]{Masatoshi Suzuki}
\date{Version of \today}
\subjclass[]{
11M26 
42A82 
46E22 
47B25 
}
\keywords{
Riemann Hypothesis;
Weil quadratic form;
screw function;
de Branges spaces;
Hilbert--P\'olya operator
}
\AtBeginDocument{%
\begin{abstract}
We establish a unified operator-theoretic framework 
for understanding the results on the Weil quadratic form obtained by 
Yoshida (1992), Bombieri (2001, 2003), Connes--Consani (2023), and 
Connes--Consani--Moscovici (2025+) from 
the perspective of the screw function introduced in Suzuki (2023). 
An advantage of the approach via the screw function is that 
it provides a method to study the Weil quadratic form, 
which is originally defined in terms of distributions, 
by means of continuous functions and
brings the theory of de Branges spaces into the analysis of the Weil quadratic form 
via the Fourier transform.
Based on this framework, we formulate a conjecture stating that a self-adjoint operator whose 
eigenvalues are the imaginary parts of the nontrivial zeros of the Riemann zeta function 
can be obtained as the limit, 
as $a \to \infty$, of self-adjoint operators arising from nonlocal realizations of 
the first-order differential operator on the finite interval $[-a,a]$. 
All these results are obtained without assuming the Riemann Hypothesis. 
This conjecture may be compared with the limit formula for the Riemann zeta function expressed 
in terms of zeta-regularized products proposed by Connes, Consani, and Moscovici, and 
it sheds new light on the spectral-theoretic interpretation of the nontrivial zeros of the Riemann zeta function.
\end{abstract}
\maketitle
}
\begin{document}

%
\section{Introduction} \label{section_1}
%

\subsection{Historical Background and Motivation} 

For test functions $f$, the Weil functional $f \mapsto W(f)$ is defined by
\begin{equation*} 
\aligned 
W(f)
& := \int_{-\infty}^{\infty} f(x) (e^{x/2}+e^{-x/2}) dx 
 - \sum_{n=1}^{\infty} \frac{\Lambda(n)}{\sqrt{n}} f(\log n) 
- \sum_{n=1}^{\infty} \frac{\Lambda(n)}{\sqrt{n}} f(-\log n)  \\
& \quad 
- (\log 4\pi + C_0) f(0)
- \int_{0}^{\infty} \left\{ f(x) + f(-x)-2e^{-x/2} f(0)\right\} \frac{e^{x/2}dx}{e^{x}-e^{-x}},
\endaligned 
\end{equation*}
where $\Lambda(n)$ denotes the von Mangoldt function, 
defined as $\Lambda(n)=\log p$ if $n=p^k$ with $k \in \Z_{>0}$,  
and $\Lambda(n)=0$ otherwise,  
and $C_0$ is the Euler--Mascheroni constant. 
Although the precise class of test functions will be specified as needed later, 
our primary object of interest is the symmetric (or Hermitian) quadratic form
\begin{equation*} 
Q_W(v_1,v_2):=W(v_1 \ast \widetilde{v_2}), \quad Q_W(v):=Q_W(v,v),
\end{equation*}
where 
\begin{equation*} 
(v_1\ast v_2)(x):=\int_{-\infty}^{\infty} v_1(y)v_2(x-y) \, dy 
\quad \text{and} \quad 
\widetilde{v}(x) := \overline{v(-x)}. 
\end{equation*} 
The Riemann Hypothesis (RH) states that 
all nontrivial zeros of the Riemann zeta function $\zeta(s)$ 
lie on the critical line $\Re(s)=1/2$. 
A fundamental result due to Weil \cite{We52} states that 
RH is equivalent to the condition that
\begin{equation*} 
Q_W(v) \geq 0 \quad \text{for all } v \in C_c^\infty(\R),
\end{equation*}
a property known as Weil's positivity criterion for RH 
(although Weil did not formulate the criterion in terms of compactly supported smooth test functions; see \cite{We52,We72} and \cite[Section 3.2]{Su23}). 
In the commentary to his Collected Works,
Weil emphasized the distributional nature of this criterion
and suggested that it deserved further investigation by analysts. 
Despite this invitation, 
the functional $W$ and the quadratic form $Q_W$ have not been extensively studied, 
with notable exceptions such as the works of 
Yoshida \cite{Yo92}, Bombieri \cite{Bo01, Bo03}, 
Connes and Consani \cite{CC23}, and 
Connes, Consani, and Moscovici \cite{CCM25}.

The study of the localization of the Weil quadratic form  $Q_W$ was pioneered by Yoshida \cite{Yo92}, 
who considered the positive definiteness of $Q_W$ on the spaces
\[
C_c^\infty(-a,a):=\{v \in C_c^\infty(\R)~|~{\rm supp}(v)\subset[-a,a]\}
\] 
and observed that RH is equivalent to its positive definiteness for every $a > 0$. 
In \cite[Proposition 1]{Yo92}, 
Yoshida further demonstrated that the condition $Q_W(v) \geq 0$ 
for all non-zero odd functions $v \in C_c^\infty(\R)$ implies RH, 
while a similar condition for all non-zero even functions implies RH except for possible real zeros. 
Crucially, he initiated the variational study of $Q_W$ 
by considering the infimum of the Rayleigh quotient $Q_W(v)/\|v\|_{L^2}^2$ on the localized space 
\[
K(a):=\{v~|~v(x)=f(x)\mathbf{1}_{[-a,a]}(x)~\text{for some $f \in C^\infty(\R)$ which has period $2a$}\}, 
\] 
proving that it is positive for sufficiently small $a > 0$ \cite[Lemma 2]{Yo92} 
or for appropriate subspaces $K_N(a)$ of finite codimension \cite[Lemma 3]{Yo92}. 
The non-degeneracy of $Q_W$ on the ``completion'' $\widehat{K(a)}~(\hookrightarrow L^2(-a,a))$ 
was shown to be equivalent to RH in \cite[Theorem 2]{Yo92}. 
Note that the hat in $\widehat{K(a)}$ does not denote the Fourier transform.

Building on this foundation, 
Bombieri \cite{Bo01, Bo03} addressed the minimization of the Rayleigh quotient 
on the Sobolev space $H_0^1(-a, a)$ \cite[Problem 1]{Bo01} (and \cite[Problem B]{Bo03}) 
and on $L^2(\mathcal{E})$ for a finite union of intervals $\mathcal{E}$ \cite[Problem 2]{Bo01} 
(and \cite[Problem A]{Bo03}).  
While the existence of a minimizer for the lower bound on $L^2(\mathcal{E})$ 
was established in \cite[Theorem 3]{Bo01} (and \cite[Theorem 4.3]{Bo03}), 
the subsequent argument concerning the continuity of this lower bound with respect to $a$ 
remains an analytically delicate issue. Specifically, although it was claimed in \cite[Theorem 5]{Bo01} that 
the lower bound varies continuously when the minimization is restricted to the subspaces of 
either even or odd functions, this assertion calls for a more careful analysis. 
In view of Yoshida's result, such continuity would essentially reproduce the equivalence 
between the failure of RH and the existence of 
a degenerating element $v \in \mathfrak{D}(Q_W)$ 
satisfying $Q_W(v)=0$, offering a simplification of the original proof that relied on explicit formulas. 
See the discussion following Theorem~\ref{thm_3} for further details.

More recently, a rigorous operator-theoretic refinement has been provided by Connes and Consani \cite{CC23} and Connes, Consani, and Moscovici \cite{CCM25}. 
They established that the localized closed symmetric form 
\[
Q_W^{\,a}:=Q_W|_{L^2(-a,a)}
\] 
is lower bounded and lower semicontinuous \cite[Section 2]{CC23}, 
leading to the construction of a canonical, 
densely defined self-adjoint operator $A_a$ on $L^2(-a, a)$ 
such that 
\begin{equation} \label{EQ_101}
Q_W^{\,a}(v)=\langle A_a v, v \rangle_{L^2} 
\end{equation}
for $v \in \mathfrak{D}(A_a) \subset \mathfrak{D}(Q_W^{\,a}) \subset L^2(-a,a)$ \cite[Section 3]{CCM25} 
(here we reformulate their results, originally stated on $[1/\lambda, \lambda]$, 
in terms of the variable $a=\log\lambda$), 
where the bracket is the standard $L^2$ inner product 
\[
\langle v_1,v_2 \rangle_{L^2} =\int_{-\infty}^{\infty}v_1(x)\overline{v_2(x)}dx. 
\]
This operator $A_a$, 
which can be viewed as a refinement of Bombieri's Lagrangian 
(\cite[Lemma 1]{Bo01}, \cite[Sections 5--6]{Bo03}), 
possesses a discrete lower bounded spectrum \cite[Theorem 3.6]{CCM25} and 
a corresponding ground state \cite[Corollary 3.7]{CCM25}. 
Furthermore, it was shown in \cite[Theorem 5.10]{CCM25} that, 
when $Q_W^{\, a}$ is restricted to a certain finite-dimensional subspace of $L^2(-a, a)$, 
the Fourier transform of the eigenfunction corresponding to the lowest eigenvalue 
possesses only real zeros.
\medskip

First, we aim to unify these various results through 
the framework of the screw function associated with $\zeta(s)$, 
which was introduced in \cite{Su23}. 
This approach not only provides transparent re-derivations of previous results, 
but also leads to more refined results presented below. 
A key advantage of this screw function approach is that it enables us to understand Weil's quadratic form through continuous (and well-behaved) functions, 
thereby avoiding the delicate distribution-theoretic aspects of Weil's original formulation.
The connection between the Weil quadratic form and the screw function, 
already emphasized in \cite[Proposition 3.1]{Su23}, 
was expected to provide new insight into the study of the Weil quadratic form. 
Indeed, \cite{Su23} reinterpreted some of the results 
by Yoshida~\cite{Yo92}, Bombieri~\cite{Bo01, Bo03}, and Connes--Consani~\cite{CC23} 
from the viewpoint of screw functions. 
However, these earlier works had not yet been placed within a fully unified framework. 
Since then, the theoretical foundations of the screw-function approach to zeta functions 
have been substantially developed. 

In particular, \cite{Su25} established a completely new picture of the Hilbert space obtained as 
the completion of $C_c^\infty(\R)$ with respect to $Q_W$ under RH. 
This was made possible by the favorable analytic properties of the screw function as an integral kernel, 
such as positivity and smoothness.
Our main objective is to understand what new insights about $Q_W$
can be obtained by localizing the global framework of \cite{Su25}
to the interval $[-a,a]$,
thereby returning to Yoshida's original viewpoint. 
This line of investigation was further motivated by recent results in 
\cite[Theorem 5.10, Sections 7--8]{CCM25}.
Let
\[
\xi(s):=s(s-1)\pi^{-s/2}\Gamma(s/2)\zeta(s)
\]
be the Riemann $\xi$-function, where $\Gamma(s)$ is the usual gamma function. Let $v_a$ be the eigenfunction corresponding to the lowest eigenvalue of $A_a$ (denoted by $\xi_\lambda$ with $a=\log\lambda$ in their notation). Denote by
\[
\widehat{f}(z):=\int_{-\infty}^{\infty}f(x)\,e^{izx}dx
\]
the Fourier transform of $f$. 
Motivated by the Hilbert--P\'olya operator constructed in \cite[Section 6]{Su25}, 
we next formulate an analogue of the conjectural limiting formula 
discussed in \cite[Section 7]{CCM25},
\begin{equation} \label{EQ_102}
\lim_{a \to \infty} c_a \widehat{v_a}(z)=\xi(1/2+iz).
\end{equation}
This line of investigation led to a unified perspective on the earlier works mentioned above
and ultimately to Corollary~\ref{cor_6},
which constitutes the main conjectural statement of the present paper. 

\subsection{Main Results} 

Before presenting the main results, 
we emphasize that none of them depends on RH. 
The first step in the study of the Weil quadratic form via screw functions 
is to obtain an explicit description of the self-adjoint operator $A_a$ in \eqref{EQ_101}. 
While the existence of $A_a$ was established abstractly 
in \cite[Section 3]{CCM25} via the theory of quadratic forms, 
the use of screw functions provides a constructive and explicit description of $A_a$.
\medskip

Formally, $A_a$ may be viewed as an integral operator with difference kernel $k(x-y)$, 
where 
$k(t)=\sum_{\xi(1/2+i\gamma)=0} e^{-i\gamma t}$ 
is understood as a distribution.
Integrating $k$ twice leads to a continuous function. 
Applying Weil's explicit formula to it yields the following continuous function.

Let $g(t)$ be a continuous real-valued even function on $\R$,
defined by
\begin{equation} \label{EQ_103}
\aligned 
g(t) 
& = -4(e^{t/2}+e^{-t/2}-2) + \sum_{n \leq \exp(|t|)} \frac{\Lambda(n)}{\sqrt{n}}(|t|-\log n)  \\
& \quad - \frac{|t|}{2}( \psi(1/4) - \log \pi )
- \frac{1}{4}\left( \Phi(1,2,1/4) - e^{-|t|/2}\Phi(e^{-2|t|},2,1/4) \right),  
\endaligned 
\end{equation}
where $\psi(s)$ is the digamma function 
and $\Phi(z,s,a) = \sum_{n=0}^{\infty} (n+a)^{-s}z^n$ 
is the Hurwitz--Lerch zeta function. 
A function $g$ on $\R$ satisfying $g(t)=\overline{g(-t)}$
 is called a screw function on the real line 
if the kernel $g(t-u)-g(t)-g(-u)+g(0)$ is nonnegative for all $t,u \in \R$. 
As shown in \cite[Theorem 1.2]{Su23}, 
the above function $g$ is a screw function 
in the sense of Krein--Langer~\cite[Section 5]{KrLa14}
if and only if RH holds. 
For this reason, we shall refer to it as the screw function associated with $\zeta(s)$. 
We define the integral operator $G$ by
\begin{equation} \label{EQ_104}
(Gu)(x):=\int_{-\infty}^{\infty}g(x-y)u(y)\,dy. 
\end{equation}
The integral on the right-hand side converges absolutely for $u \in C_c^\infty(\R)$, 
since $g$ is continuous. 
However, as noted immediately after Theorem 1.6 in \cite{Su23}, 
$g(t)$ does not tend to zero as $|t| \to \infty$, 
and hence the integral does not necessarily converge for a general $u \in L^2(\R)$.  
Within the theory of screw functions, 
it would be more natural to consider the integral operator $\widetilde{G}$ 
with the kernel $g(t-u)-g(t)-g(-u)+g(0)$ (cf. Section~\ref{section_8}); 
nevertheless, for applications to $Q_W$, 
the simpler operator $G$ is sufficient. 

Throughout this paper, for each $a>0$, 
we view $L^2(-a,a)$ as a closed subspace of $L^2(\R)$ by extending functions to be zero outside $(-a,a)$. 
Let $L_0^2(-a,a)$ denote the closed subspace of $L^2(-a,a)$ 
consisting of functions $u$ satisfying $\widehat{u}(0)=\int_{-a}^{a} u(x)dx=0$. 
We denote by $P_a$ the orthogonal projection from $L^2(\R)$ onto $L_0^2(-a,a)$  
(see \eqref{EQ_201}), and define
\begin{equation} \label{EQ_105}
G_a:=P_a GP_a : L_0^2(-a,a) \to L_0^2(-a,a). 
\end{equation}
The self-adjointness of the operator $G_a$ 
follows immediately from the fact that $g$ is real-valued and even. 
By \cite[Proposition 3.1]{Su23}, 
the integral operator $G_a$ associated with the screw function 
is directly related to Weil's quadratic form $Q_W$ 
via differentiation of test functions. 
Let $D=i\,d/dx$ denote the differential operator on $L^2(-a,a)$ 
with Dirichlet boundary conditions, and let $D^\ast$ be its adjoint. We then define
\begin{equation} \label{EQ_106}
B_a:=D^\ast G_a D : L^2(-a,a) \to L^2(-a,a), \quad \mathfrak{D}(B_a)=H_0^1(-a,a). 
\end{equation}
This operator is symmetric but fails to be self-adjoint. 
Then, the operator $A_a$ in \eqref{EQ_101} can be characterized as follows.

\begin{theorem} \label{thm_1}
For each \(a>0\), the self-adjoint operator \(A_a\) defined by
\eqref{EQ_101} is the Friedrichs extension of the symmetric
operator \(B_a\) defined by \eqref{EQ_106}.
\end{theorem}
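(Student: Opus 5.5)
The plan is to identify the closed quadratic form of $A_a$ with the closure of the form of $B_a$. Then the claim follows from the standard fact that the Friedrichs extension of a lower bounded symmetric operator is the self-adjoint operator associated with the closure of its form. By \cite[Section 3]{CCM25}, $A_a$ is the unique self-adjoint operator attached to the closed, lower bounded form $Q_W^{\,a}$. We may take $C_c^\infty(-a,a)$ as a form core of $Q_W^{\,a}$, since that form is constructed as the closure of $Q_W$ restricted to such test functions, or by lower semicontinuity. It therefore suffices to prove two things:
\begin{itemize}
\item[(i)] $\langle B_a v, v\rangle_{L^2}=Q_W(v)$ for $v\in H_0^1(-a,a)$;
\item[(ii)] $Q_W^{\,a}$ is the closure of the form $v\mapsto\langle B_av,v\rangle_{L^2}$ on $H_0^1(-a,a)$.
\end{itemize}

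\textbf{Step 1 (form identity on test functions).} For $v\in C_c^\infty(-a,a)$, the function $Dv=iv'$ has integral zero, so $Dv\in L_0^2(-a,a)$ and $P_aDv=Dv$. This gives
\[
\langle B_av,v\rangle_{L^2}=\langle G_aDv,Dv\rangle_{L^2}=\int\!\!\int g(x-y)\,v'(y)\overline{v'(x)}\,dy\,dx .
\]
By \cite[Proposition 3.1]{Su23}, this double integral equals $W(v\ast\widetilde v)=Q_W(v)$. Concretely, it equals $-\int g''(t)(v\ast\widetilde v)(t)\,dt$ in the distributional sense, and $-g''$ reproduces the Weil distribution via the explicit formula. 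The terms of $g$ linear in $|t|$ differentiate twice to multiples of $\delta$, which produce the $f(0)$ terms. The constant $-\tfrac14\Phi(1,2,1/4)$ is killed by the derivatives.

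\textbf{Step 2 (extension to $H_0^1$).} The right-hand side above is continuous in the $H^1$ norm, because $g$ is bounded on $[-2a,2a]$ and $v\mapsto v'$ is bounded from $H_0^1$ to $L^2$. It follows that $H_0^1(-a,a)\subset\mathfrak D(Q_W^{\,a})$ and that (i) holds there. The mechanism is that $C_c^\infty$ is $H^1$-dense in $H_0^1$, $H^1$ convergence implies $L^2$ convergence, and closedness plus lower semicontinuity of $Q_W^{\,a}$ identify the limit. In particular $B_a$ is symmetric and bounded below, with the same lower bound as $Q_W^{\,a}$, so its Friedrichs extension exists.

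\textbf{Step 3 (form cores) — the main obstacle.} We must show that $H_0^1(-a,a)$, or already $C_c^\infty(-a,a)$, is a core for $Q_W^{\,a}$. The issue is that the $Q_W$-norm is weaker than the $H^1$ norm: the form domain should be roughly $H^{1/2}$-type, coming from the $\delta$-part and the $\int (f(x)+f(-x)-2e^{-x/2}f(0))\dots$ term, which is a $\log$-type $|\xi|$ symbol. If \cite{CCM25} defines $Q_W^{\,a}$ as the closure from $C_c^\infty(-a,a)$, this is immediate. Otherwise I would first try the following route.
\begin{itemize}
\item[(a)] Decompose $Q_W^{\,a}=q_0+q_1$, where $q_1$ is $L^2$-bounded; it collects the archimedean smooth parts and the finitely many primes with $\log n\le 2a$, all with bounded kernels on $[-2a,2a]$.
\item[(b)] Take for $q_0$ a positive form comparable to the Fourier form $\int|\widehat v(\xi)|^2\log(2+|\xi|)\,d\xi$.
\item[(c)] Conclude using that $C_c^\infty(-a,a)$ is dense in the corresponding local Sobolev-type space on an interval. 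This holds because mollification after a slight dilation toward the origin converges in that norm.
\end{itemize}
With cores matched, both $Q_W^{\,a}$ and the Friedrichs form are the closure of the same form on $C_c^\infty(-a,a)$, and uniqueness in the representation theorem gives that $A_a$ is the Friedrichs extension of $B_a$.
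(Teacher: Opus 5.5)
Your proof is correct, but both of its ingredients are obtained differently from the paper. The overall skeleton is the same. You show that $Q_W^{\,a}$ extends the form $v\mapsto\langle B_av,v\rangle_{L^2}$ on $H_0^1(-a,a)$. You then show that $C_c^\infty(-a,a)$ is a core for $Q_W^{\,a}$, so that the closure of the $B_a$-form is $Q_W^{\,a}$ and $A_a$ is the Friedrichs extension.

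\textbf{Extension step.} Lemma~\ref{lem_3_1} gets $H_0^1(-a,a)\subset\mathfrak{D}(Q_W^{\,a})$ from the decay $\widehat v(z)=O(|z|^{-1})$ and the explicit formula. It then tests $Q_W^{\,a}(v,w)=\langle B_av,w\rangle_{L^2}$ against the exponentials $e_n(x)=e^{i\pi nx/a}$ and invokes \cite[Proposition~10.5\,(v)]{Sch12}. Your route uses $H^1$-continuity of the continuous-kernel form plus closedness of $Q_W^{\,a}$; it is shorter and equally valid.

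\textbf{Core step.} The paper starts from the known core $E=\mathrm{span}\{e_n\}$ of \cite[Proposition~2.3]{CC23} and truncates each $e_n$ near $\pm a$. It bounds the error by $\epsilon\log(1/\epsilon)$ in the log-weighted Fourier norm, using $|\widehat{w_\epsilon}|\le2\epsilon$ and $|\widehat{w_\epsilon}(z)|\le M/|z|$. This is explicit and quantitative. Your dilation-plus-mollification argument instead proves density of $C_c^\infty(-a,a)$ in all of $H^{\log}(-a,a)$ directly. It therefore does not need the CC23 core, and it only uses that the weight $1+\log^+|z|$ changes by a bounded factor under dilations near $1$. Like the paper, it relies on \eqref{EQ_208}, i.e.\ the equivalence of the form norm with $\int(1+\log^+|z|)|\widehat v(z)|^2\,dz$ on the form domain.

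You should write (c) out, since it carries the argument:
\begin{itemize}
\item For $v_t(x)=v(x/t)$ one has $\widehat{v_t}(z)=t\widehat v(tz)$.
\item These dilations are uniformly bounded on $L^2((1+\log^+|z|)\,dz)$ for $t\in[1/2,1]$, because $\log^+|z/t|\le\log^+|z|+\log(1/t)$.
\item They therefore converge strongly to the identity as $t\to1$, since they do so on $C_c(\mathbb R)$.
\item Mollifying $v_t$ at scale $\delta<(1-t)a$ multiplies $\widehat{v_t}$ by $\widehat\phi(\delta z)$, which is bounded by $1$ and tends to $1$, so dominated convergence finishes.
\end{itemize}

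One correction to your framing. Your opening justification that $C_c^\infty(-a,a)$ is a core ``since that form is constructed as the closure of $Q_W$ restricted to such test functions, or by lower semicontinuity'' is not valid. $Q_W^{\,a}$ is defined on all of $L^2(-a,a)$ through the explicit formula, and lower semicontinuity yields only closedness. The core supplied in the literature is $E$, whose elements satisfy $e_n(\pm a)=(-1)^n\neq0$. So your Step~3 is not a fallback but the actual content of the theorem, just as the truncation estimate is in the paper.

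Also, the form domain is of log-Sobolev type rather than $H^{1/2}$-type. Since only the log weight enters your (b), this does not affect the argument.
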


This result provides a concrete operator-theoretic realization of 
the spectral interpretation of $Q_W^{\,a}$ established in \cite{CCM25} 
through the use of screw functions, 
thereby clarifying the connection between this spectral interpretation 
and the classical variational approach of Yoshida~\cite{Yo92}. 

Moreover, as shown in \cite[Theorem 3.6]{CCM25}, 
for each $a>0$, the spectrum of the self-adjoint operator $A_a$ is bounded from below and discrete, 
with $+\infty$ as its only accumulation point. 
In particular, the largest lower bound of the spectrum of $A_a$, denoted by $\lambda_a$, 
is an eigenvalue. Therefore, 
there exists a nonzero function $v \in \mathfrak{D}(A_a)$ attaining the infimum
\begin{equation} \label{EQ_107}
\lambda_a=\inf_{0\ne v \in \mathfrak{D}(Q_W^{\,a})}\frac{Q_W^{\,a}(v)}{\|v\|_{L^2}^2}.
\end{equation}
Furthermore, the domain $\mathfrak{D}(A_a)$ 
is strictly larger than $\mathfrak{D}(B_a)=H_0^1(-a,a)$ 
and contains functions such as constants. 
As will be shown later (Lemma \ref{lem_3_1}), 
we have
\begin{equation} \label{EQ_108}
Q_W^{\,a}(v)=\langle B_a v, v \rangle_{L^2}\quad \text{for} \quad v \in H_0^1(-a,a).
\end{equation}
Since the form-norm closure of $C_c^\infty(-a,a)~(\subset H_0^1(-a,a))$ with respect to $Q_W^{\,a}$ 
contains the core of the quadratic form $Q_W^{\,a}$ as in the proof of Theorem~\ref{thm_1}, 
we obtain the following result.

\begin{corollary} \label{cor_2} 
For each $a>0$, any function attaining the infimum in \eqref{EQ_107}
belongs to the closure of $C_c^\infty(-a,a)$ with respect to the form norm
associated with $Q_W^{\,a}$.
In particular, $\lambda_a$ is the infimum of the Rayleigh quotient
\begin{equation*} 
\frac{\langle B_a v, v \rangle_{L^2}}{\|v\|_{L^2}^2}
\end{equation*}
over $C_c^\infty(-a,a)$.
\end{corollary}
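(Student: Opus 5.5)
The plan is to combine Theorem~\ref{thm_1} with the standard description of the Friedrichs extension. The Friedrichs extension $A_a$ of the lower-bounded symmetric operator $B_a$ is constructed as follows. Take the form $v\mapsto\langle B_a v,v\rangle_{L^2}$ on $\mathfrak{D}(B_a)=H_0^1(-a,a)$ and close it. The closed form obtained this way is exactly the form associated with $A_a$. By \eqref{EQ_101} and the uniqueness of the self-adjoint operator attached to a closed lower-bounded form, this closed form must be $Q_W^{\,a}$. Hence $\mathfrak{D}(Q_W^{\,a})$ is the closure of $H_0^1(-a,a)$ in the form norm
\[
\|v\|_{Q}^2:=Q_W^{\,a}(v)+(1-\lambda_a)\|v\|_{L^2}^2 ,
\]
which is a genuine norm because $Q_W^{\,a}\ge\lambda_a\|\cdot\|_{L^2}^2$. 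In this description we use \eqref{EQ_108} to identify $\langle B_av,v\rangle$ with $Q_W^{\,a}(v)$ on $H_0^1(-a,a)$.

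Next we reduce from $H_0^1(-a,a)$ to $C_c^\infty(-a,a)$. We show that $C_c^\infty(-a,a)$ is dense in $H_0^1(-a,a)$ for $\|\cdot\|_Q$; combined with the previous step, this makes $C_c^\infty(-a,a)$ a form core for $Q_W^{\,a}$. It suffices to prove the bound $Q_W^{\,a}(v)\le C\|v\|_{H^1}^2$ on $H_0^1(-a,a)$, since $C_c^\infty(-a,a)$ is dense in $H_0^1(-a,a)$ in the $H^1$ norm. From \eqref{EQ_106} and \eqref{EQ_108},
\[
Q_W^{\,a}(v)=\langle G_a Dv, Dv\rangle_{L^2}.
\]
Since $g$ is continuous on $[-2a,2a]$, the restriction of $G$ to $L^2(-a,a)$ has a bounded (indeed Hilbert--Schmidt) kernel, and $P_a$ is a projection. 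Therefore
\[
|Q_W^{\,a}(v)|\le \|G_a\|\,\|Dv\|_{L^2}^2\le C\|v\|_{H^1}^2 .
\]
One should check that $Dv\in L^2_0(-a,a)$, so that $G_a$ really acts on it. This holds because $\int_{-a}^a v'\,dx=0$ for $v\in H_0^1(-a,a)$.

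The two statements now follow. Any minimizer $v$ of \eqref{EQ_107} lies in $\mathfrak{D}(A_a)\subset\mathfrak{D}(Q_W^{\,a})$, and $\mathfrak{D}(Q_W^{\,a})$ is the $\|\cdot\|_Q$-closure of $C_c^\infty(-a,a)$; this gives the first claim. For the second, take $v_n\in C_c^\infty(-a,a)$ with $v_n\to v$ in the form norm. Form-norm convergence implies $\|v_n\|_{L^2}\to\|v\|_{L^2}$ and $Q_W^{\,a}(v_n)\to Q_W^{\,a}(v)$. Hence the Rayleigh quotients $\langle B_av_n,v_n\rangle/\|v_n\|^2$ converge to $\lambda_a$. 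Since the infimum over the smaller set $C_c^\infty(-a,a)$ is at least $\lambda_a$, it equals $\lambda_a$.

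The main point requiring care is the first step: identifying the closure of the form of $B_a$ with all of $Q_W^{\,a}$, and not with some smaller closed form. This is exactly the content of Theorem~\ref{thm_1}, so we rely on it. What remains to check is the small technical point that the form closure of a restriction of $B_a$ to a core, namely $C_c^\infty(-a,a)$, is the same as the form closure of $B_a$ itself; the bound in the second step supplies this.
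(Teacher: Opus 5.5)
Your proof is correct, but it is organized differently from the paper's. The paper does not deduce the corollary from the \emph{statement} of Theorem~\ref{thm_1}; it reads it off from the \emph{proof}. There, the key step \eqref{EQ_302} shows directly that each exponential $e_n(x)=\exp(i\pi nx/a)$ of the form core $E$ is a $\|\cdot\|_Q$-limit of the cutoffs $e_n\eta_\epsilon\in C_c^\infty(-a,a)$, via the Fourier-side estimate $\|e_n(1-\eta_\epsilon)\|_Q^2\ll\epsilon\log(1/\epsilon)$. So in the paper, the density of $C_c^\infty(-a,a)$ in $\mathfrak{D}(Q_W^{\,a})$ is what proves Theorem~\ref{thm_1}, and the corollary is immediate.

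You instead use only the conclusion of Theorem~\ref{thm_1}. Through the bijection between closed lower-bounded forms and lower-bounded self-adjoint operators, it says that the form closure from $H_0^1(-a,a)$ is all of $Q_W^{\,a}$. You then add a separate elementary step, $|Q_W^{\,a}(v)|\le\|G_a\|\,\|v'\|_{L^2}^2$ on $H_0^1(-a,a)$, which comes from the continuity of $g$ on $[-2a,2a]$, to pass from $H_0^1(-a,a)$ to $C_c^\infty(-a,a)$.

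Your version is modular. It shows that the passage from $H_0^1$ to $C_c^\infty$ is harmless because the form is $H^1$-bounded. The only substantive point is that form limits of functions vanishing at $\pm a$ recover functions such as $e_n$ or constants, which do not vanish there, and that point lies entirely inside Theorem~\ref{thm_1}. The paper's version never needs the boundedness of $G_a$, since its approximants are already in $C_c^\infty(-a,a)$, but it relies on the internals of the proof rather than the theorem as stated.

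A minor remark: you need not invoke $\mathfrak{D}(A_a)$, since a function attaining the infimum in \eqref{EQ_107} lies in $\mathfrak{D}(Q_W^{\,a})$ by definition.
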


This is analogous to \cite[Problem 2]{Bo01}
(and \cite[Problem A]{Bo03}) and \cite[Theorem 3.6]{CCM25},
but here the class over which the infimum is taken is made explicit.
For the relation between the Rayleigh quotient in
\cite[Problem 1]{Bo01} (and \cite[Problem B]{Bo03})
and the operator $G_a$, see Section~\ref{Bombieri_Problem_1}.
\medskip

One expects that the eigenvalues of the self-adjoint operator $A_a$
vary continuously in $a$. 
However, because $A_a$ is unbounded and its domain is difficult to describe explicitly,
it is not straightforward to prove such continuity.
Indeed, in \cite[Theorem 5]{Bo01}
(and \cite[Theorem 4.4]{Bo03}),
analogues of the infimum in \eqref{EQ_107}
are considered on the spaces of even and odd functions,
and their continuity with respect to $a$ is asserted,
but the details of the proof are not fully provided there.

In contrast, by exploiting the asymptotic expansion
\eqref{EQ_202} of the screw function near the origin,
which involves a logarithmic singularity,
one can prove the continuity of the lowest eigenvalue $\lambda_a$
without imposing any parity restriction.

\begin{theorem} \label{thm_3}
The lowest eigenvalue $\lambda_a$ is continuous in $a$.
\end{theorem}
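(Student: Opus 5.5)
We prove continuity by establishing upper and lower semicontinuity of $a\mapsto\lambda_a$ separately. The working characterization is Corollary~\ref{cor_2},
\[
\lambda_a=\inf_{0\neq v\in C_c^\infty(-a,a)}\frac{\langle B_a v,v\rangle_{L^2}}{\|v\|_{L^2}^2}
=\inf_{0\neq v\in C_c^\infty(-a,a)}\frac{\langle G v',v'\rangle_{L^2}}{\|v\|_{L^2}^2},
\]
where we used \eqref{EQ_108} and the fact that $Dv=iv'$ already has mean zero, so $P_a$ acts trivially on it. The inclusion $C_c^\infty(-a,a)\subset C_c^\infty(-b,b)$ for $a<b$ shows that $\lambda_a$ is nonincreasing in $a$. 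Consequently it suffices to rule out jumps, i.e. to show $\lambda_{a-}=\lambda_a=\lambda_{a+}$.

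\textbf{Left continuity.} Fix $v\in C_c^\infty(-a,a)$. Its support lies in some $[-a+\epsilon,a-\epsilon]$, so $v\in C_c^\infty(-b,b)$ for every $b$ close to $a$ from below. Hence $\lambda_b\le$ Rayleigh quotient of $v$ for all such $b$, and therefore $\lim_{b\uparrow a}\lambda_b\le\lambda_a$. Monotonicity gives the reverse inequality, so left continuity is immediate.

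\textbf{Right continuity.} Here we need $\liminf_{b\downarrow a}\lambda_b\ge\lambda_a$, and we use dilation. Given $v\in C_c^\infty(-b,b)$, set $w(x)=v(bx/a)$, which lies in $C_c^\infty(-a,a)$. With $s=b/a\to1^+$, a change of variables gives $\|w\|^2=s^{-1}\|v\|^2$ and
\[
\langle Gw',w'\rangle=s^{-1}\iint g\bigl((x-y)/s\bigr)\,v'(x)\overline{v'(y)}\,dx\,dy .
\]
It therefore suffices to show
\[
\Bigl|\iint \bigl[g(t/s)-g(t)\bigr]_{t=x-y}\,v'(x)\overline{v'(y)}\,dx\,dy\Bigr|\le \varepsilon(s)\bigl(\langle Gv',v'\rangle+C\|v\|^2\bigr),
\qquad \varepsilon(s)\to0 .
\]
Granting this, we get $\lambda_a\le(1+\varepsilon(s))\lambda_b+\varepsilon(s)C$ uniformly in $v$, and letting $s\to1^+$ yields the claim.

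\textbf{Main obstacle: the perturbation estimate.} We plan to use the asymptotic expansion \eqref{EQ_202} of $g$ near $0$, which has the form $g(t)=c\,t^2\log|t|+(\text{smooth})$. This is the statement that the distribution $-g''$ has the logarithmic singularity coming from the archimedean factor. Then
\[
g(t/s)-g(t)=c(s^{-2}-1)\,t^2\log|t| - c\,s^{-2}(\log s)\,t^2+(\text{smooth remainder}),
\]
where the smooth remainder is small in $C^2[-2b,2b]$. We move two derivatives onto the kernel by integrating by parts. The smooth parts contribute $o(1)\cdot O(\|v\|^2)$. The term $(s^{-2}-1)\,t^2\log|t|$ is $O(s-1)$ times a multiple of the singular part of $g$ itself, so it is controlled by the form $\langle Gv',v'\rangle$ plus lower-order terms.

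For this we need that the form norm dominates the log-singular part. Concretely, the quadratic form of $\log|x-y|$ (the $H^{-0}$-type log-Sobolev term) is bounded by $Q_W^{\,b}(v)+C\|v\|^2$ uniformly for $b$ in a compact range. This follows from the lower-boundedness of $Q_W$ being proved via the same decomposition: $Q_W$ equals the positive archimedean log-kernel part minus the bounded prime-sum part on $[-2b,2b]$, the latter being a finite sum of point evaluations of $v*\widetilde v$, hence bounded by $C\|v\|^2$. We will need to check carefully that this constant $C$ depends continuously on $b$; note that the prime sum over $n\le e^{2b}$ jumps when $b$ crosses $\tfrac12\log n$, but each term is bounded, so $C$ is still locally bounded. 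That completes the plan.
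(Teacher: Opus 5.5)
Your architecture differs from the paper's and is sound in outline. By Corollary~\ref{cor_2}, the quotient $\iint g(x-y)v'(y)\overline{v'(x)}\,dx\,dy/\|v\|_{L^2}^2$ on $C_c^\infty(-a,a)$ does not depend on $a$. Hence $\lambda_a$ is nonincreasing and $\lambda_a=\inf_{b<a}\lambda_b$, which gives left continuity and upper semicontinuity more cheaply than the paper does. (Minor slip: for $w(x)=v(sx)$ one has $\langle Gw',w'\rangle=\iint g((x-y)/s)\,v'(y)\overline{v'(x)}\,dx\,dy$ with no factor $s^{-1}$; the resulting extra factor $s$ in the quotient is harmless.)

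The trouble is the perturbation estimate for right continuity. First, by \eqref{EQ_202} the singular part of $g$ is $\frac12|t|\log|t|$, not $c\,t^2\log|t|$. So $-g''$ contains $-\frac12{\rm Pf}(1/|t|)$, whose form is the unbounded multiplier $\frac1{2\pi}\int\log|z|\,|\widehat v(z)|^2\,dz$ rather than a bounded $\log|x-y|$ kernel. That part is repairable. Dilation produces $\frac12(s^{-1}-1)|t|\log|t|-\frac{\log s}{2s}|t|$, which by \eqref{EQ_204} contributes $(s^{-1}-1)(\mathcal L_b(v)-\|v\|^2)+O(s-1)\|v\|^2$. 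Then \eqref{EQ_205} gives $\mathcal L_b(v)\le\langle Gv',v'\rangle+C_b\|v\|^2$.

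The step that genuinely fails is your claim that the rest of $g$ is small in $C^2$ and contributes $o(1)\|v\|^2$. Apart from $A|t|$ (harmless, since its contribution is computed exactly), the rest contains the arithmetic part $\sum_n\Lambda(n)n^{-1/2}(|t|-\log n)_+$. Its kinks at $\pm\log n$ move to $\pm s\log n$ under dilation. Its contribution is therefore, up to $O(s-1)\|v\|^2$, a combination of $\langle\tau_{\pm s\log n}v,v\rangle-\langle\tau_{\pm\log n}v,v\rangle=\frac1{2\pi}\int|\widehat v(z)|^2\bigl(e^{\pm isz\log n}-e^{\pm iz\log n}\bigr)\,dz$. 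This is not $o(\|v\|^2)$ uniformly. Take $\log2<2b<\log3$ (so only $n=2$ enters) and $v(x)=\phi(x)e^{-iz_0x}$ with a fixed nonnegative bump $\phi$ and $z_0=\pi/((s-1)\log2)$. The two phases are then opposite, and along a suitable sequence $s\to1^+$ the difference stays of order $\|v\|^2$.

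Your target inequality is nevertheless true, but only because such $v$ have large form norm. Splitting at $|z|=Z=(s-1)^{-1/2}$ bounds the difference by $(s-1)^{1/2}\log n\,\|v\|^2+\frac{C}{\log(1/(s-1))}\int\log^+|z|\,|\widehat v(z)|^2\,dz$. By \eqref{EQ_205} and \eqref{EQ_206}, the last integral is at most $C(\langle Gv',v'\rangle+C'\|v\|^2)$, so $\varepsilon(s)\asymp1/\log\frac1{s-1}$. If $e^{2a}$ is a prime power, there is also an unmatched term whose overlap has length $2(b-a)$. It needs the analogous short-interval bound $\int_I|v|^2\ll|I|^{1/2}\|v\|^2+\frac{1}{\log(1/|I|)}\int(1+\log^+|z|)|\widehat v(z)|^2\,dz$.

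This high-frequency control, which is the quantitative content of Proposition~\ref{prop_260309}, is the missing idea in your plan. The paper avoids uniform estimates altogether. After rescaling to $[-1,1]$, it extracts an $L^2$-convergent subsequence of minimizers by compactness. It then needs only the joint continuity of the translation and $r''$ terms in $(a,w)\in\R\times L^2$, together with the lower semicontinuity of the $a$-independent closed form $\bar q^0$.
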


Since the continuity of $\lambda_a$ can be established without assuming RH,
Theorem~\ref{thm_3} immediately yields, as a corollary,
another proof of Yoshida's result \cite{Yo92} that
RH is equivalent to the nondegeneracy of $Q_W^{\,a}$ for every $a>0$.
Indeed, the failure of RH is equivalent to the existence of some $a>0$
for which $\lambda_a<0$.
Since $\lambda_a>0$ for sufficiently small $a>0$ \cite[Lemma 2]{Yo92},
it follows that if RH is false, then $Q_W^{\,a}$ must be degenerate
for some value of $a$ by continuity of $\lambda_a$. 
\medskip

We now turn to the eigenfunction corresponding to the lowest eigenvalue,
following the approach of \cite{CCM25}.
Roughly speaking,
\cite[Theorem 5.10]{CCM25}
(see also~\cite[Section 7]{CCM25})
shows that if the lowest eigenvalue $\lambda_a$ is simple
and its eigenfunction
$v_a \in \mathfrak{D}(A_a)$ is even,
then all zeros of the Fourier transform $\widehat{v_a}(z)$
(corresponding to $\widehat{\xi_\lambda}(z)$ in~\cite{CCM25})
are real.
More precisely, they study restrictions of $Q_W^{\,a}$
to suitable finite-dimensional subspaces of $L^2(-a,a)$,
obtaining the corresponding reality result at each finite-dimensional level.
In our framework, however,
the simplicity of $\lambda_a$
and the evenness of the corresponding eigenfunction
follow naturally for sufficiently small $a$
by once again exploiting the asymptotic expansion
\eqref{EQ_202} of the screw function.

\begin{theorem} \label{thm_4}
For sufficiently small $a>0$,
the lowest eigenvalue $\lambda_a$ is positive,
simple, and satisfies
\[
\lambda_a  = \log \frac{1}{a} + \mu_1- \log(2\pi)+\psi(2) - 1 + O(a)
\]
as $a\to0+$, for some constant $\mu_1>0$.
Furthermore, the corresponding eigenfunction is even.
\end{theorem}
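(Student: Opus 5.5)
The plan is to rescale the problem to the fixed interval $[-1,1]$ and show that, after subtracting $\log(1/a)$, the operator $A_a$ converges to a fixed limiting operator as $a\to 0+$. Set $(U_a v)(x)=\sqrt{a}\,v(ax)$, a unitary map $L^2(-1,1)\to L^2(-a,a)$, and consider the form $w\mapsto Q_W^{\,a}(U_a w)$ on $L^2(-1,1)$. I would use \eqref{EQ_108} and the representation $Q_W^{\,a}(v)=\langle G_a Dv,Dv\rangle$ for $v\in H_0^1(-a,a)$ (by Corollary~\ref{cor_2}, $C_c^\infty(-a,a)$ is a form core, so working there suffices). Into this I would insert the asymptotic expansion \eqref{EQ_202} of $g$ near the origin. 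I expect that expansion to have the shape
\[
g(t)=\tfrac12 t^2\log|t| + c\,t^2 + O(|t|^3),
\]
with the constant $c$ built from $\log(2\pi)$, $\psi(2)$ and $1$, and the $O(|t|^3)$ term coming with derivative control.

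Differentiating twice turns the kernel into the finite-part distribution $-\tfrac12\,\mathrm{f.p.}\,|t|^{-1}$ plus a multiple of $\delta$. Under the scaling $t\mapsto at$, the logarithm produces exactly the term $\log(1/a)\,\|w\|^2$. This should give
\[
Q_W^{\,a}(U_a w)=\Bigl(\log\tfrac1a-\log(2\pi)+\psi(2)-1\Bigr)\|w\|_{L^2}^2+\mathfrak{t}(w)+a\,\mathfrak{r}_a(w),
\]
with the following ingredients:
\begin{itemize}
\item $\mathfrak{t}(w)=\tfrac14\iint_{[-1,1]^2}|w(x)-w(y)|^2|x-y|^{-1}\,dx\,dy+\int_{-1}^1 V(x)|w(x)|^2\,dx$ is the scale-free logarithmic-Laplacian form on the interval, where $V$ is the killing potential $\tfrac12\log\bigl((1-x^2)\bigr)^{-1}$ up to constants, arising from the finite part and the extension by zero;
\item $\mathfrak{r}_a$ is a remainder that is form-bounded with respect to $\mathfrak{t}$ uniformly in small $a$, with relative bound $O(1)$.
\end{itemize}

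Let $T$ be the self-adjoint operator attached to the closed form $\mathfrak{t}$. It has compact resolvent, since the form domain embeds compactly into $L^2$. I define $\mu_1$ as the bottom of its spectrum. Positivity of $\mu_1$ would come from the Dirichlet-form term together with $V\ge 0$ (after collecting constants); if needed, I would instead compare with the explicitly known spectrum of the logarithmic Laplacian on an interval. Simplicity and evenness then follow from a Beurling--Deny argument. Since $\mathfrak{t}(|w|)\le\mathfrak{t}(w)$, every ground state can be taken nonnegative. The jump kernel $|x-y|^{-1}$ is strictly positive, so the associated semigroup is positivity improving. By Perron--Frobenius the ground state is therefore strictly positive and simple. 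Because $\mathfrak{t}$ is invariant under $x\mapsto -x$, that simple ground state must be even.

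Finally I would transfer these properties to $A_a$ by perturbation theory for forms. From the displayed identity, $U_a^{-1}A_aU_a-(\log\tfrac1a+\text{const})$ converges to $T$ in norm-resolvent sense, with an $O(a)$ error. This gives $\lambda_a=\log\tfrac1a+\mu_1-\log(2\pi)+\psi(2)-1+O(a)$, and in particular $\lambda_a>0$ for small $a$. Since $T$ has a spectral gap above $\mu_1$, the lowest eigenvalue of $A_a$ is simple for small $a$. Because $g$ is even, $A_a$ commutes with the reflection, so the one-dimensional ground eigenspace is either even or odd. Its eigenvector is $O(a)$-close to the even ground state of $T$, so it must be even.

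The main obstacle is the remainder estimate: proving that the $O(|t|^3)$ part of \eqref{EQ_202}, after the two differentiations hidden in $D^\ast G_aD$ and after rescaling, yields a form $a\,\mathfrak{r}_a$ that is relatively form-bounded with respect to $\mathfrak{t}$, uniformly in $a$, and not merely on $H_0^1$. My first attempt would be to write the remainder kernel as $a\,h(a(x-y))$ with $h''$ locally bounded or at worst $O(\log|t|)$. I would then bound the resulting integral operator in Hilbert--Schmidt norm on $L^2(-1,1)$. This gives an $O(a)$ bounded perturbation, which is even stronger than what is needed.
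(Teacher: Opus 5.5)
Your architecture matches the paper's proof, but the expansion you expect for $g$ is wrong, and taken literally it does not give your key identity. The shared steps are: rescale to $[-1,1]$; isolate the scale-free form $\mathcal{L}$ of \eqref{EQ_404} plus the shift $\log(1/a)-(2A+1)$; get a simple ground state for $T$ from the Markov property and irreducibility of $\bar{\mathcal L}$; transfer to $A_a$ by an $O(a)$ perturbation. By \eqref{EQ_202} the correct expansion is
\[
g(t)=\tfrac12|t|\log|t|+A|t|+\sum_{n\le e^{|t|}}\frac{\Lambda(n)}{\sqrt n}\bigl(|t|-\log n\bigr)+r(t),
\]
with $r$ even, $C^2$ and $r(t)=O(t^2)$. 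With your shape $g(t)=\tfrac12t^2\log|t|+ct^2+O(|t|^3)$, one gets $-g''=-\log|t|-\tfrac32-2c+O(|t|)$, which is locally integrable. That gives no $\mathrm{Pf}(1/|t|)$, no $\delta$, and after rescaling only an $O(a\log(1/a))$ bounded kernel instead of the $\log(1/a)\|w\|^2$ shift, so $\lambda_a$ would not blow up. What you assert afterwards (a finite part of $|t|^{-1}$ plus a $\delta$, and the $\log(1/a)$ shift) is exactly what the correct expansion yields. The term $\tfrac12|t|\log|t|$ contributes $-\tfrac12\mathrm{Pf}(1/|t|)-\delta$ to $-g''$, and $A|t|$ contributes $-2A\delta$. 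So the constant $-(2A+1)=-\log(2\pi)+\psi(2)-1$ comes from the $\delta$-mass, not from a $t^2$ coefficient. You should also note that the prime sum vanishes for $|t|<\log 2$, so it does not enter for $a<\tfrac12\log 2$.

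Once you use \eqref{EQ_202}, the step you call the main obstacle disappears. The rescaled remainder is $-a\iint r''(a(x-y))\,w(y)\overline{w(x)}\,dx\,dy$ with $r''$ continuous. That is a bounded operator of norm $O(a)$ on all of $L^2(-1,1)$, so no relative form-boundedness is needed; this is the paper's \eqref{EQ_501}.

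Two smaller points. First, $V\ge0$ only gives $\mu_1\ge0$. Strict positivity needs $\mu_1$ to be attained (compact resolvent) together with $V(x)=-\tfrac12\log(1-x^2)>0$ a.e. Second, your evenness argument improves on the paper's. You use that the ground-state projection of the perturbed operator is $O(a)$-close in norm to that of $T$, while rank-one projections onto an odd and an even vector are at norm distance $1$. This avoids the paper's appeal to a fixed sign for the ground state of $A_a$, which the paper only establishes for $T$.
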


Let us return to the eigenfunction $v_a$
corresponding to the lowest eigenvalue. 
The fact that all zeros of its Fourier transform $\widehat{v_a}(z)$ are real
follows from the property that it serves as the characteristic function
(given by a zeta-regularized product)
of a self-adjoint perturbation of the differential operator $d/dx$
with periodic boundary conditions
\cite[Theorem 5.10]{CCM25}.
However, since such a differential operator cannot be defined directly
as an operator on $L^2(-a,a)$,
the authors of \cite{CCM25} instead consider restrictions of $Q_W^{\,a}$
to finite-dimensional subspaces $V$.
Consequently, what is actually established there is not directly
the reality of the zeros of $\widehat{v_a}(z)$ itself,
but rather the reality of the zeros of the Fourier transform
of a function minimizing the Rayleigh quotient of $Q_W^{\,a}|_V$.

\medskip

Instead of considering perturbations of the differential operator $d/dx$
with periodic boundary conditions,
we construct an analogue of $\widehat{v_a}(z)$
by considering self-adjoint extensions of the minimal operator
$\D_a$
in a Hilbert space different from the usual $L^2$ space.
Choose $\lambda$ such that $\lambda_a>\lambda$, and define
$T_a=T_{a,\lambda}:=A_a-\lambda I$.
Since $T_a$ is positive,
\begin{equation} \label{EQ_109}
\| v \|_{T_a}^2 := \langle T_a v, v \rangle_{L^2}=Q_W^{\,a}(v) - \lambda\|v\|_{L^2}^2
\end{equation}
defines a norm on $C_c^\infty(-a,a)$.
Let $\mathcal{H}(T_a)$ denote the corresponding completion.
Then $C_c^\infty(-a,a)$ embeds injectively into $\mathcal{H}(T_a)$.
We therefore consider the differential operator
$\mathscr{D}_a:=i\,d/dx$ on $\mathcal{H}(T_a)$ with domain
\begin{equation} \label{EQ_110}
\mathfrak{D}(\mathscr{D}_a):=C_c^\infty(-a,a).
\end{equation}
It turns out that $\mathscr{D}_a$ is a symmetric operator
with deficiency indices $(1,1)$
(Lemmas~\ref{lem_7_1} and \ref{lem_6_2}).
Hence $\mathscr{D}_a$ admits a family of self-adjoint extensions
$\overline{\mathscr{D}}_{a,\theta}$ parametrized by $\theta\in[0,2\pi)$.
A function arising from the boundary form of the adjoint operator
$\D_a^\ast$ provides a counterpart to $\widehat{v_a}(z)$ in \cite{CCM25}, as follows. 

\begin{theorem} \label{thm_5}
Let $a>0$ and $\theta\in[0,2\pi)$.
Let $v_\pm(a,x)$ be eigenfunctions of the adjoint operator $\D_a^\ast$
corresponding respectively to the eigenvalues $\pm i$,
normalized so that
$\|v_+(a,\cdot)\|_{T_a}=\|v_-(a,\cdot)\|_{T_a}$.
Then the function
\begin{equation} \label{EQ_111}
W(a, \theta; z):=
(z - i) \int_{-a}^a  v_{+}(a, x) e^{iz x} dx
+ e^{i\theta} (z + i) \int_{-a}^a  v_{-}(a,x) e^{iz x} dx
\end{equation}
is  entire in $z$.
The eigenvalues of the self-adjoint operator
$\overline{\mathscr{D}}_{a,\theta}$
are precisely the zeros of $W(a,\theta;z)$. 
Furthermore, all zeros of $W(a,\theta;z)$ are real.
\end{theorem}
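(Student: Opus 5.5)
The plan is to use von Neumann's theory of self-adjoint extensions for the symmetric operator $\D_a$ on $\mathcal{H}(T_a)$. By Lemmas~\ref{lem_7_1} and \ref{lem_6_2} its deficiency indices are $(1,1)$, so $\ker(\D_a^\ast \mp i)$ are spanned by $v_\pm(a,\cdot)$. The normalization $\|v_+\|_{T_a}=\|v_-\|_{T_a}$ shows that the isometries $v_+\mapsto -e^{i\theta}v_-$ (with sign conventions fixed so as to match \eqref{EQ_111}) parametrize all extensions. Concretely, $\overline{\D}_{a,\theta}$ has domain $\overline{\mathfrak{D}(\D_a)}+\mathrm{span}\{v_+ + e^{i\theta}v_-\}$ and acts by $\D_a^\ast$ there.

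\textbf{Identifying eigenvalues with zeros of $W$.} Let $\phi$ be an eigenvector with $\overline{\D}_{a,\theta}\phi=z\phi$. I would pair the equation $(\D_a^\ast-z)\phi=0$ against the exponential $e_z(x)=e^{i\bar z x}$ in a suitable form, using the representation of $\mathcal{H}(T_a)$ through the screw kernel. By Theorem~\ref{thm_1} and \eqref{EQ_108}, on $C_c^\infty(-a,a)$ we have $\langle u,w\rangle_{T_a}=\langle G_aDu,Dw\rangle_{L^2}-\lambda\langle u,w\rangle_{L^2}$. This suggests that the map $u\mapsto\widehat{u}(z)=\int u(x)e^{izx}\,dx$ extends continuously from $C_c^\infty(-a,a)$ to a functional on $\mathcal{H}(T_a)$, given by a reproducing element $K_z$. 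I would prove this first, using the bound $\lambda_a>\lambda$ (so that $\|\cdot\|_{T_a}$ dominates $\|\cdot\|_{L^2}$ up to a constant) together with the fact that $\widehat{u}$ is entire of exponential type.

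Once this is in place, integration by parts gives $\widehat{(\D_a u)}(z)=z\,\widehat{u}(z)$ for $u\in C_c^\infty(-a,a)$. Hence $\widehat{(\D_a^\ast-z)f}(z)$ vanishes on the closure of $\mathfrak{D}(\D_a)$, and on $v_\pm$ it equals $(\pm i - z)\widehat{v_\pm}(z)$. Consequently, for $f=f_0+c(v_++e^{i\theta}v_-)$ in the domain of the extension,
\[
\widehat{((\overline{\D}_{a,\theta}-z)f)}(z) = -c\,W(a,\theta;z).
\]
If $z$ is an eigenvalue with eigenvector $f$, I would argue that $c\neq0$. Indeed, if $c=0$ then $f$ lies in the closure of $\D_a$, whose eigenvectors would be annihilated by a Fourier-type argument using compact support: $\widehat{f}$ would then be entire with $\widehat{f}(w)(w-z)$ equal to the transform of an element in the range. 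Since $(\D_a^\ast-z)f=0$, this forces $f$ to be an exponential in $\mathcal{H}(T_a)$.

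It is cleaner, though, to run the argument in reverse. The range of $\overline{\D}_{a,\theta}-z$ has codimension at most $1$, and it lies in the kernel of the functional $h\mapsto \widehat{h}$ evaluated at $z$, twisted by $W$. So $z$ is an eigenvalue exactly when the range fails to be the whole space, i.e.\ exactly when the characteristic function vanishes. This is Krein's theory of entire operators: $\D_a$ is entire with gauge given by Fourier evaluation, and $W$ is its characteristic function. Entirety of $W$ in $z$ is immediate, since $v_\pm$ lie in $\mathcal{H}(T_a)$ and $z\mapsto\widehat{v_\pm}(z)$ is entire through the reproducing element $K_z$, which depends antiholomorphically on $z$.

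\textbf{Main obstacle.} Reality of the zeros then follows because eigenvalues of a self-adjoint operator are real, provided every zero of $W$ is an eigenvalue. That direction requires showing that, when $W(z)=0$, there exists $f\neq0$ with $\overline{\D}_{a,\theta}f=zf$. I would build it as $f=(\overline{\D}_{a,\theta}-w)^{-1}$ applied to a resolvent identity involving $K_z$, or more directly show that $\D_a^\ast-z$ has a one-dimensional kernel spanned by $e^{-izx}$ restricted to $(-a,a)$ viewed in $\mathcal{H}(T_a)$, and that the boundary condition reduces to $W(z)=0$.

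The hard part is verifying that $e^{-izx}\mathbf 1_{[-a,a]}$ genuinely defines an element of $\mathcal{H}(T_a)$ lying in $\mathfrak{D}(\D_a^\ast)$. This needs that constants (and bounded smooth functions) belong to the form domain, as noted after Theorem~\ref{thm_1}, and that no boundary contributions survive. I would attack this using the logarithmic singularity of $g$ from \eqref{EQ_202}: it makes the form comparable to the $H^{1/2}$-type norm, for which indicator jumps are borderline. Controlling exactly this step is where I expect the real work to lie.
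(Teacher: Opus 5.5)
Your identity $\widehat{((\overline{\D}_{a,\theta}-z)f)}(z)=-c\,W(a,\theta;z)$ is correct (it is the boundary form $W(f,v_{\bar z})$ in disguise), and it handles ``eigenvalue $\Rightarrow$ zero''. The gap is in the converse direction, ``every zero of $W(a,\theta;\cdot)$ is an eigenvalue'', on which the reality claim rests.

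You flag this direction as the main obstacle and propose to use $e^{-izx}\mathbf 1_{[-a,a]}$ as the $z$-eigenvector of $\D_a^\ast$. That cannot work, because this function is in general not in $\ker(\D_a^\ast-z)$. The adjoint is taken in $\mathcal H(T_a)$, whose inner product is $\langle T_a\cdot,\cdot\rangle_{L^2}$. Integration by parts gives $\langle\D_a u,v\rangle_{T_a}=\langle u,i(T_av)'\rangle_{L^2}$, so $T_a(\D_a^\ast v)=i(T_av)'$, and $\D_a^\ast v=zv$ means $T_av=Ce^{-izx}$. The kernel is therefore spanned by $T_a^{-1}e_z$ with $e_z(x)=e^{-izx}$. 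This is a multiple of $e_z$ only if $e_z$ is an eigenfunction of $A_a$, so the $H^{1/2}$-type difficulties you anticipate are beside the point.

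The same confusion appears in your case $c=0$. The conclusion there is not that $f$ is an exponential. Your vanishing identity holds on $\mathfrak D(\overline{\D}_a)$ at every point $w$, so $(z-w)\widehat f(w)=0$ for all $w\in\C$. Hence $\widehat f\equiv0$, and $f=0$ because $\mathcal H(T_a)\hookrightarrow L^2(-a,a)$ injectively.

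The missing observation is that the eigenvector you need is the reproducing element you already introduced. Let $v_z\in\mathcal H(T_a)$ represent the functional $v\mapsto\widehat v(\bar z)$. For $u\in C_c^\infty(-a,a)$,
$\langle\D_a u,v_z\rangle_{T_a}=\widehat{(iu')}(\bar z)=\bar z\,\widehat u(\bar z)=\langle u,z v_z\rangle_{T_a}$,
so $v_z\in\ker(\D_a^\ast-z)$ for every $z\in\C$; in fact $v_z=T_a^{-1}e_z$.

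The paper then proceeds as follows. It uses $\mathfrak D(\overline{\D}_{a,\theta})=\{v\in\mathfrak D(\D_a^\ast)\mid W(v,w_\theta)=0\}$, where $w_\theta=v_++e^{i\theta}v_-$ and $W(u,v)=\langle\D_a^\ast u,v\rangle_{T_a}-\langle u,\D_a^\ast v\rangle_{T_a}$. Since $\langle v_z,v_\pm\rangle_{T_a}=\overline{\widehat{v_\pm}(\bar z)}$, one gets $W(v_z,w_\theta)=\overline{W(a,\theta;\bar z)}$. So a zero $z_0$ yields the eigenvector $v_{\bar z_0}$ of the self-adjoint operator $\overline{\D}_{a,\theta}$, which forces $z_0\in\R$.

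Alternatively, your own identity already suffices. If $W(a,\theta;z_0)=0$, then $\mathrm{Ran}(\overline{\D}_{a,\theta}-z_0)$ lies in the kernel of the nonzero continuous functional $h\mapsto\widehat h(z_0)$, so it is not dense. Self-adjointness gives $\ker(\overline{\D}_{a,\theta}-\bar z_0)=\mathrm{Ran}(\overline{\D}_{a,\theta}-z_0)^\perp\neq\{0\}$, so $\bar z_0$ is an eigenvalue, hence real, and $z_0=\bar z_0$ is an eigenvalue. Note that the criterion that detects eigenvalues here is ``range not dense'', not ``range not the whole space'' as you wrote.
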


The equation $W(a,\theta;z)=0$ is equivalent to
\[
e^{i\theta}
=
- \left. \left(
 (z-i)\int_{-a}^a v_+(x)e^{izx}\,dx  \right) \right\slash
\left( (z+i)\int_{-a}^a v_-(x)e^{izx}\,dx \right). 
\]
This is analogous to the equation
$e^{i\theta}=\exp(iza)/\exp(-iza)$,
which characterizes the eigenvalues of the self-adjoint extension
$\partial_\theta$ of the minimal operator
$\partial=i\,d/dx$ on $L^2(-a,a)$
through the boundary values of its eigenfunctions.

\medskip

Let us compare Theorem~\ref{thm_5}
with \cite[Theorem 5.10]{CCM25}.
First, the two results are parallel in that both establish
the reality of the zeros of a certain function
through its relation to the self-adjointness of a differential operator.
However, whereas the result in \cite{CCM25} requires strong assumptions,
such as the simplicity of $\lambda_a$
and the evenness of the corresponding eigenfunction,
our Theorem~\ref{thm_5} has the advantage of being proved unconditionally. 
In fact, its proof does not require detailed information on the arithmetic terms 
appearing in the Weil form $Q_W^{\,a}$. 
It relies essentially only on the fact that 
the prime contribution to $Q_W^{\,a}$ involves only finitely many primes for fixed $a>0$.  
Moreover, they obtain a determinant representation
of $\widehat{v_a}(z)$ in terms of a zeta-regularized determinant
(by restricting to finite-dimensional subspaces),
thereby interpreting $\widehat{v_a}(z)$ as a characteristic function.
By contrast, our expression \eqref{EQ_111}
arises from a Lagrangian condition.

A further difference appears in the approach to RH,
as discussed in \cite[Sections 7 and 8]{CCM25}.
While their function $\widehat{v_a}(z)$ is expected
to approximate $\xi(1/2-iz)$ in the limit $a\to\infty$ 
through the conjectural formula \eqref{EQ_102}, 
our function $W(a,\theta;z)$ is expected,
as explained below,
to approximate the reciprocal of one plus its logarithmic derivative.

\begin{corollary} \label{cor_6}
If one can choose $\theta=\theta(a)$ and $\phi(a,z)$
($\ne \infty$ for any $a>0$ and $z\in\C$)
such that
\begin{equation} \label{EQ_112}
\lim_{a \to \infty} e^{\phi(a,z)}\, W(a,\theta;z)
= 
\frac{\xi(1/2-iz)}{\xi(1/2-iz)+\xi'(1/2-iz)}
\end{equation}
holds uniformly on every compact subset $K\subset\C$,
then RH holds. 
\end{corollary}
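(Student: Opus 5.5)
The approach is Hurwitz's theorem combined with Theorem~\ref{thm_5}. By Theorem~\ref{thm_5}, for each $a>0$ and each $\theta$ the function $z\mapsto W(a,\theta;z)$ is entire and has only real zeros. Because $\phi(a,z)\ne\infty$, the factor $e^{\phi(a,z)}$ never vanishes. We may take $\phi(a,\cdot)$ entire in $z$, or at least holomorphic on a neighbourhood of each compact set, which is implicit in requiring uniform convergence of a holomorphic family. Then $F_a(z):=e^{\phi(a,z)}W(a,\theta(a);z)$ is holomorphic and has only real zeros. By hypothesis, $F_a$ converges locally uniformly on $\C$ to
\[
F(z):=\frac{\xi(1/2-iz)}{\xi(1/2-iz)+\xi'(1/2-iz)} .
\]
A locally uniform limit of holomorphic functions is holomorphic, so $F$ extends to an entire function. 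In particular the denominator $\xi+\xi'$ introduces no genuine poles: every zero of $\xi(s)+\xi'(s)$ must be cancelled by a zero of $\xi(s)$ of at least the same order.

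Suppose RH fails, so that $\xi(\rho)=0$ with $\Re\rho\ne 1/2$. Put $z_0=i(\rho-1/2)$, so that $1/2-iz_0=\rho$ and $\Im z_0=\Re\rho-1/2\ne0$. We must check that $F(z_0)=0$. Let $m\ge1$ be the multiplicity of $\rho$ as a zero of $\xi$. Near $\rho$, $\xi+\xi'$ equals $\xi'$ plus a function vanishing to order $m$. Since $\xi'$ vanishes to order exactly $m-1$, the sum $\xi+\xi'$ vanishes to order exactly $m-1$. Hence $F$ has a zero of order exactly $1$ at $z_0$. Incidentally, this confirms that $F$ is not identically zero, since $F(z)\to1$ along suitable directions.

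Now apply Hurwitz's theorem. Choose a small disc $K$ around $z_0$ that avoids the real axis and contains no other zero of $F$. Since $F\not\equiv0$, Hurwitz's theorem says that for all sufficiently large $a$ the function $F_a$ has exactly one zero in $K$. This contradicts the fact that all zeros of $F_a$ are real. Therefore every zero of $\xi$ lies on $\Re s=1/2$.

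One might worry about zeros of $\xi+\xi'$ off the line that are not zeros of $\xi$, since these would be poles of $F$. They are excluded automatically, because uniform convergence on compact sets forces $F$ to be entire. So they cause no difficulty, and we never need to locate the zeros of $\xi+\xi'$.

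The only delicate point is the order computation at $\rho$, which guarantees that $F$ really vanishes at $z_0$ rather than the zero cancelling against the denominator. The logarithmic-derivative form makes this automatic: $F=1/(1+\xi'/\xi)$ and $\xi'/\xi$ has a simple pole at $\rho$, so $F(z_0)=0$. Everything else follows directly from Theorem~\ref{thm_5} and Hurwitz's theorem.
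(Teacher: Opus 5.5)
Your proposal is correct and follows the route the paper intends. The paper gives no separate proof: it only remarks that, since Theorem~\ref{thm_5} makes every zero of $W(a,\theta;z)$ real, \eqref{EQ_112} would imply RH, and your Hurwitz argument at the simple zero $z_0=i(\rho-1/2)$ of the limit is exactly that, with $\phi(a,\cdot)$ holomorphic in $z$, an assumption you rightly make explicit. One caution: $\xi(s)+\xi'(s)$ genuinely has zeros that are not zeros of $\xi(s)$ (under RH they all lie in $\Re s<1/2$), so the right-hand side of \eqref{EQ_112} has poles and, taken literally, uniform convergence on all compacta of $\C$ can never hold; your claim that these poles are ``excluded automatically'' should be replaced by reading the hypothesis on compact sets avoiding the poles, which costs nothing since you apply Hurwitz only on a small disc around $z_0$ where the limit is holomorphic.
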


A direct computation of the eigenfunctions shows that
every eigenvalue of $\overline{\mathscr{D}}_{a,\theta}$
has multiplicity one.
The right-hand side of \eqref{EQ_112}
is consistent with this simplicity of the spectrum.
The exponential factor $\exp(\phi(a,z))$ is included
to allow for a possible normalization in the limit process. 
It is plausible that the form of the right-hand side of \eqref{EQ_112} 
suggests that $\theta=\pi$ may be the most natural choice, 
but we do not pursue these questions here. 
The reason why the limit formula \eqref{EQ_112}
is expected to hold will be discussed in Section~\ref{section_7}.
In brief, it is motivated by the determination of the structure of
$\mathcal{H}(A_\infty)
:=\mathcal{H}(T_{\infty,0})$
under RH in \cite[Theorem 1.1]{Su25},
together with the construction of the Hilbert--P{\'o}lya operator
described in \cite[Section 6]{Su25}.

Under RH, one has $A_a>0$, so that $\lambda=0$
may be chosen in $T_a$ for every $a>0$.
The limit formula \eqref{EQ_112}
is formulated with the situation $T_a=A_a$
(i.e. $\lambda=0$) in mind,
and indeed Section~\ref{section_7} proceeds under the assumption $A_a>0$.
Nevertheless, when $A_a>0$,
both $\mathcal{H}(T_a)$ and $\mathcal{H}(A_a)$ are defined
and are isomorphic as Hilbert spaces.
Hence the zeros of $W(a,\theta;z)$ are expected to be independent
of the choice of $\lambda$ used in the definition of $T_a$.
However, in any attempt to prove the limit formula
\eqref{EQ_112}, control of $\lambda$ $(<\lambda_a)$
will likely play an important role. 
Such control is expected to require a detailed analysis of the arithmetic contribution 
coming from the prime terms in $Q_W^{\,a}$, 
which lies beyond the arguments used in the proof of Theorem~\ref{thm_5}, 
where only the finiteness of the prime contribution for fixed $a$ plays a role.
\medskip

Since the reality of the zeros of $W(a,\theta;z)$
for each $a>0$ has already been established in
Theorem~\ref{thm_5},
it would suffice to prove \eqref{EQ_112}
as an identity of complex functions in order to deduce RH. 
As explained in Section~\ref{section_8},  
the eigenfunctions $v_\pm(a,x)$ are 
solutions of Fredholm integral equations of the first kind
associated with the integral operator $G_a$
whose kernel is the continuous function $g(x-y)$. 
Consequently, the conjectural limit formula
\eqref{EQ_112} in Corollary~\ref{cor_6}
can be formulated as a purely analytic statement,
independently of $\mathcal{H}(T_a)$.
\medskip

The organization of this paper is as follows.
In Section~\ref{section_2},
we study the asymptotic expansion of the screw function $g$
near the origin and derive from it several formulas for the quadratic form
$\langle B_a v,v\rangle$.
The latter formulas will be used repeatedly in the proofs
of the main results.
Using the results of Section~\ref{section_2},
we prove Theorem~\ref{thm_1} in Section~\ref{section_3}
and Theorem~\ref{thm_3} in Section~\ref{section_4}.
In both proofs,
the compactness of the embedding
$\mathfrak{D}(Q_W^{\,a}) \to L^2(-a,a)$
plays an essential role.
Theorem~\ref{thm_4} is proved in Section~\ref{section_5}.
In addition to the results of Section~\ref{section_2},
the proof relies on the theory of Dirichlet forms. 
In Section~\ref{section_6},
we prove Theorem~\ref{thm_5}
using von Neumann's theory of self-adjoint extensions.
Section~\ref{section_7}
explains the motivation for the limit formula in
Corollary~\ref{cor_6}
by relating it to the results of \cite{Su25}.
The main tool there is the theory of de Branges spaces.
Finally, in Section~\ref{section_8},
we relate the theory of the quadratic forms
$Q_W^{\,a}(v)=\langle A_a v,v\rangle_{L^2}$
developed in this paper
to the theory of the quadratic forms
$\langle G_a u,u\rangle_{L^2}$.
While the former belongs to the theory of distributions
and is analytically more delicate,
the latter has the advantage of being treatable within
the theory of integral operators with continuous kernels.

%
\section{Preliminaries} \label{section_2}
%

\subsection{Precise definition of operators} 

Let
\[
L_0^2(-a,a)
=
\{u\in L^2(-a,a)\mid \widehat u(0)=0\},
\]
as in Section~\ref{section_1}.
Then
$D=i\,d/dx$
denotes the differentiation operator on $L^2(-a,a)$
with Dirichlet boundary conditions $v(\pm a)=0$,
whose domain and range are
$\mathfrak D(D)=H_0^1(-a,a)$
and
$\mathfrak R(D)=L_0^2(-a,a)$,
respectively. 
Let $D^\ast$ denote the adjoint of $D$ with respect to 
the standard $L^2$ inner product. 
While $D^\ast$ acts as the same differential operator $i\, d/dx$, 
its domain is $\mathfrak{D}(D^\ast) = H^1(-a, a)$ and 
its range is $\mathfrak{R}(D^\ast) = L^2(-a, a)$. 
Consequently, $D$ is not self-adjoint.
Define the projection $P_a : L^2(\R) \to L_0^2(-a,a)$ by 
\begin{equation} \label{EQ_201}
(P_au)(x):=u(x) - \frac{1}{2a}\widehat{u}(0)
= 
u(x) - \frac{1}{2a} \int_{-a}^{a} u(y) \, dy
\end{equation}
for $x \in (-a,a)$, and $(P_a u)(x)=0$ otherwise.
Then, 
\[
\aligned 
(P_a K u)(x) 
& = (Ku)(x) - \frac{1}{2a} \int_{-a}^{a} (Ku)(t) \, dt, \quad x \in (-a,a). 
\endaligned 
\]
This defines the operator $G_a$ in \eqref{EQ_105}. 
The kernel function $g$ is continuous, 
and its first derivative is piecewise continuous with 
only a discrete set of discontinuities at which finite one-sided limits exist. 
It follows that for any $u \in L_0^2(-a,a)$, we have $G_a u \in \mathfrak{D}(D^\ast)=H^1(-a,a)$. 
Consequently, the operator $B_a$ on $L^2(-a,a)$ 
with domain $\mathfrak{D}(B_a):=H_0^1(-a,a)$ is well-defined by \eqref{EQ_106}. 
Since $G_a$ is self-adjoint, the relation 
$\langle B_a v, w \rangle_{L^2}=\langle v, B_aw \rangle_{L^2}$
holds for all $v, w \in \mathfrak{D}(B_a)$. 
Thus, $B_a$ is a symmetric operator, but it is not self-adjoint.
Indeed, the domain of the adjoint operator, $\mathfrak{D}(B_a^\ast)$, 
consists of all $w$ such that the functional $v \mapsto \langle B_a v, w \rangle$ is $L^2$-continuous, which also includes functions $w \in H^1(-a,a)$ that do not vanish on the boundary.
That is, $\mathfrak{D}(B_a) \subsetneq \mathfrak{D}(B_a^\ast)$. 

\subsection{Expansion near the origin} 

We derive an asymptotic formula for the screw function $g$ in a neighborhood of the origin. 
The resulting asymptotic formula will be used repeatedly throughout the paper. 
\cite[9.554]{GrRy07} with $z=e^{-2t}$, $m=2$, and $v=1/4$ yields
\[
\aligned 
e^{-t/2}\Phi(e^{-2t},2,1/4)
& = 2t(\log(2t)+\psi(1/4)-\psi(2)) \\
& \quad + \zeta(2,1/4) 
+ \sum_{n=2}^{\infty} \zeta(2-n,1/4) \frac{(-2t)^n}{n!}
\endaligned 
\]
for $t \to 0+$, 
where $\zeta(s,a)$ denotes the Hurwitz zeta function. 
By definition \eqref{EQ_103}, 
\[
\aligned 
g(t) 
& =-4(e^{t/2}+e^{-t/2}-2)-(|t|/2)(\psi(1/4)-\log\pi)-(1/4)(F(0)-F(t)) \\
& \quad + \sum_{n \leq \exp(|t|)} \frac{\Lambda(n)}{\sqrt{n}}(|t|-\log n), 
\endaligned 
\]
where 
\[
F(t):=2|t|(\log(2|t|)+\psi(1/4)-\psi(2)) + \zeta(2,1/4)
+ \sum_{n=2}^{\infty} \zeta(2-n,1/4) \frac{(-2|t|)^n}{n!}. 
\] 
We have 
$\psi(2)=1-C_0$ \cite[(8.365), (8.366)]{GrRy07} 
and 
$\psi(1/4)=-(\pi/2)-3\log 2-C_0$ \cite[(8.366)]{GrRy07}, 
where $C_0$ denotes the Euler--Mascheroni constant. 
It follows from the defining series that 
$\Phi(1,2,1/4)=\zeta(2,1/4)$. 
Using also
$\zeta(0,1/4)=1/4$ \cite[(9.523)]{GrRy07}, 
we obtain  
\begin{equation} \label{EQ_202}
\aligned 
g(t) 
& = \frac{1}{2}|t|\log |t|+A|t|+ \sum_{n \leq \exp(|t|)} \frac{\Lambda(n)}{\sqrt{n}}(|t|-\log n)+r(t)
\endaligned 
\end{equation}
in a neighbourhood of $t=0$,  
where $r$ is an even $C^2$-function satisfying $r(t)=O(t^2)$, and 
\[
A=\frac{1}{2}(\log(2\pi)-\psi(2))=\frac{1}{2}(\log(2\pi)+C_0-1)=0.707546\dots.
\]
The sum involving the von Mangoldt function in \eqref{EQ_202} 
is understood to be zero if $|t| < (1/2)\log 2$.  

\subsection{Quadratic form formulas} For the quadratic form
\[
Q_{B_a}(v) := \langle B_a v, v \rangle_{L^2}, \quad v \in \mathfrak{D}(B_a)=H_0^1(-a,a)
\]
we compute the quadratic forms corresponding to each term on the right-hand side of \eqref{EQ_202}.
First, for the first term on the right-hand side, we have
\[
\int_{-a}^{a}\int_{-a}^{a} |x-y|  v'(y) \overline{v'(x)} dx dy = -2\|v\|_{L^2}^2, 
\quad v \in \mathfrak{D}(B_a).
\]
Next, for the second term on the right-hand side, if we define
\begin{equation}\label{EQ_203}
\mathcal{L}_a(v)
:=
\frac{1}{4}
\int_{-a}^a
\int_{-a}^a
\frac{|v(x)-v(y)|^2}{|x-y|}
\,dx\,dy
-
\frac{1}{2}
\int_{-a}^a
\log(a^2-x^2)|v(x)|^2\,dx 
\end{equation}
for $v \in \mathfrak{D}(B_a)$ (to ensure the convergence of the second term), 
then we obtain
\begin{equation}\label{EQ_204}
\int_{-a}^a \int_{-a}^a  |x-y|\log|x-y| v'(y) \overline{v'(x)} dx dy 
 = 2 \mathcal{L}_a(v)- 2 \|v\|_{L^2}^2, 
\quad v \in \mathfrak{D}(B_a).
\end{equation}
We verify this relation. Let $I(v)$ denote the left-hand side.
Applying integration by parts with respect to $y$, we have
\[
\int_{-a}^a |x-y|\log|x-y| v'(y) dy 
= - \int_{-a}^a \text{sgn}(y-x)(\log|x-y|+1) v(y) dy.
\]
Substituting this into $I(v)$, we introduce an $\epsilon$-cutoff to handle the singularity of the kernel rigorously:
\[
I(v) = - \lim_{\epsilon \to 0} \int_{-a}^a \overline{v'(x)} \left( \int_{|y-x|>\epsilon} \text{sgn}(y-x)(\log|x-y|+1) v(y) dy \right) dx. 
\]
Let $F_\epsilon(x)$ denote the inner integral inside the brackets. 
Differentiating it by Leibniz's rule, we obtain
\begin{align*}
F_\epsilon'(x) 
&= -(\log\epsilon+1){v(x-\epsilon)+ v(x+\epsilon)} - \int_{|y-x|>\epsilon} \frac{v(y)}{|x-y|} dy.
\end{align*}
Using this, we integrate by parts with respect to $x$. 
Taking into account the boundary conditions $v(\pm a)=0$ and the fact that $v(x \pm \epsilon) \to v(x)$ as $\epsilon \to 0$, we find
\begin{align*}
I(v) 
& = - \lim_{\epsilon \to 0} \int_{-a}^a \left[ 2(\log\epsilon+1)|v(x)|^2
+
\int_{|y-x|>\epsilon} \frac{\overline{v(x)}v(y)}{|x-y|} dy \right] dx.
\end{align*}
Here, we use the identity $2\text{Re}(\overline{v(x)}v(y)) = |v(x)|^2 +  |v(y)|^2- |v(x)-v(y)|^2$ 
to convert the product into a difference of squares 
(the imaginary part vanishes due to the symmetry of the double integral).
Evaluating the integral concerning $|v(x)|^2$ yields
\[
\int_{y \in (-a,a), |y-x|>\epsilon} \frac{1}{|x-y|} dy = -2\log\epsilon + \log(a^2-x^2).
\]
Substituting this and simplifying the coefficient of $|v(x)|^2$, the contribution from the first term is $2\log\epsilon + 2$, while the contribution from the second term is $-2\log\epsilon + \log(a^2-x^2)$. Consequently, $\log\epsilon$ is cancelled out, and we obtain
\[
I(v) = - \int_{-a}^a |v(x)|^2 \left( 2 + \log(a^2-x^2) \right) dx 
+ \frac{1}{2} \int_{-a}^a \int_{-a}^a \frac{|v(x)-v(y)|^2}{|x-y|} dx dy.
\]
Rearranging the terms yields \eqref{EQ_204}. 

Furthermore, setting 
$g_0(t)=\sum_{n \leq \exp(|t|)} n^{-1/2}\Lambda(n)\bigl(|t|-\log n\bigr)$
and calculating the corresponding integral, 
\begin{align*}
\,&\int_{-a}^{a}
\int_{-a}^{a}
g_0(x-y)\,
v'(y)\overline{v'(x)}
\,dx\,dy \\
&=
-
\sum_{n\le \exp(2a)}
\frac{\Lambda(n)}{\sqrt{n}}
\Biggl(
\int_{-a}^{\,a-\log n}
v(x+\log n)\overline{v(x)}
\,dx
+
\int_{-a+\log n}^{\,a}
v(x-\log n)\overline{v(x)}
\,dx
\Biggr).
\end{align*}
Computing $Q_{B_a}(v)$ by using the asymptotic expansion \eqref{EQ_202}, we get
\begin{equation} \label{EQ_205}
\aligned 
Q_{B_a}(v)
&= \mathcal{L}_a(v) - (2A+1) \|v\|_{L^2}^2 \\
& \quad 
- \sum_{n \leq \exp(2a)} \frac{\Lambda(n)}{\sqrt{n}} 
\Biggl(
\int_{-a}^{\,a-\log n}
v(x+\log n)\overline{v(x)}
\,dx \\
& \qquad \qquad \qquad \qquad \quad 
+
\int_{-a+\log n}^{\,a}
v(x-\log n)\overline{v(x)}
\,dx
\Biggr) \\
& \quad 
- \int_{-a}^{a}\int_{-a}^{a}  r''(x-y) \,v(y) \overline{v(x)} dx dy
\endaligned 
\end{equation}
for $v \in H_0^1(-a,a)$. 
\medskip

\subsection{Formulas in Fourier transforms}~
Expanding the numerator of 
\[
\frac{1}{4}
\iint_{(-a,a)^2,|x-y|>\epsilon}
\frac{|v(x)-v(y)|^2}{|x-y|}
\,dx\,dy .
\]
as
$|v(x)|^2+|v(y)|^2-2\operatorname{Re}(v(x)\overline{v(y)})$
and using symmetry, it equals 
\[
\frac12
\int_{-a}^a
|v(x)|^2
\left(
\int_{(-a,a)\setminus(x-\epsilon,x+\epsilon)}
\frac{1}{|x-y|}
\,dy
\right)
dx
-
\frac{1}{2}
\iint_{(-a,a)^2,\ |x-y|>\epsilon}
\frac{v(x)\overline{v(y)}}{|x-y|}
\,dx\,dy .
\]
Since 
\[
\int_{-a}^{x-\epsilon}\frac{1}{x-y}\,dy
+
\int_{x+\epsilon}^{a}\frac{1}{y-x}\,dy
=
\log(a^2-x^2)-2\log\epsilon, 
\]
we obtain
\[
\mathcal{L}_a(v)
=
\lim_{\epsilon\to0}
\left[
-\log\epsilon
\int_{-a}^a|v(x)|^2\,dx
-
\frac{1}{2}
\iint_{(-a,a)^2,\ |x-y|>\epsilon}
\frac{v(x)\overline{v(y)}}{|x-y|}
\,dx\,dy
\right].
\]
The second term may be rewritten, by Plancherel's theorem, 
as the quadratic form associated with the convolution kernel
$
K_\epsilon(x)
=
|x|^{-1}\mathbf 1_{\{|x|>\epsilon\}}(x)$.
Its Fourier transform is
\[
\widehat{K_\epsilon}(z)
=
\int_{|x|>\epsilon}
\frac{e^{-izx}}{|x|}
\,dx
=
2\int_{\epsilon|z|}^{\infty}
\frac{\cos u}{u}
\,du .
\]
Using the asymptotic expansion of the cosine integral \cite[(8.230)]{GrRy07},
we obtain
\[
\int_{\epsilon|z|}^{\infty}
\frac{\cos u}{u}
\,du
=
-\log\epsilon
-\log|z|
-C_0
+o(1)
\qquad
(\epsilon\to0).
\]
Substituting this into the Fourier representation yields
\begin{equation} \label{EQ_206}
\aligned
\mathcal{L}_a(v)
&=
\lim_{\epsilon\to0}
\frac{1}{4\pi}
\int_{-\infty}^{\infty}
\bigl(
-2\log\epsilon
-\widehat{K_\epsilon}(z)
\bigr)|\widehat v(z)|^2
\,dz \\
& =
\frac{1}{2\pi}
\int_{-\infty}^{\infty}
\bigl(
\log|z|+C_0
\bigr)|\widehat v(z)|^2
\,dz .
\endaligned
\end{equation}
Using the Fourier transform $\widehat{v}$ of $v$ (defined as the zero-extension of $v$ outside $(-a,a)$) 
\[
\int_{-\infty}^\infty v(x \mp \tau) \overline{v(x)} dx 
= \frac{1}{2\pi} \int_{-\infty}^\infty |\widehat{v}(z)|^2 e^{\pm iz\tau} dz, 
\]
which yields
\[
\aligned 
\int_{-a}^{\,a-\log n}  
v(x+\log n)\overline{v(x)}
\,dx
& +
\int_{-a+\log n}^{\,a}
v(x-\log n)\overline{v(x)}
\,dx \\
& = \frac{1}{\pi} \int_{-\infty}^{\infty} |\widehat{v}(z)|^2 \cos(z\log n) dz.
\endaligned 
\]
Furthermore, noting that 
$\|v\|_{L^2}^2= (2\pi)^{-1}\|\widehat{v}\|_{L^2}^2$, 
we obtain from \eqref{EQ_205} that
\begin{equation} \label{EQ_207}
\aligned 
Q_{B_a}(v)
&= \frac{1}{2\pi} \int_{-\infty}^{\infty} 
\left( \log |z| - \log(2\pi) \right) 
|\widehat{v}(z)|^2 
dz \\
& \quad -
\frac{1}{2\pi} 
 \int_{-\infty}^{\infty} |\hat{v}(z)|^2 
\left[\sum_{n \leq \exp(2a)} \frac{\Lambda(n)}{\sqrt{n}} 2\cos(z\log n) \right] dz \\
& \quad  
- \frac{1}{2\pi} \int_{-\infty}^{\infty} 
\widehat{r_a^{\prime\prime}}(z) |\widehat{v}(z)|^2 dz
\endaligned 
\end{equation}
for $v \in H_0^1(-a,a)$. 
To interpret the term $\widehat{r_a^{\prime\prime}}(z)$,
we decompose $r(t)$ as $r(t)=r_0(t)+r_1(t)$, where
\[
r_0(t) = -4(e^{t/2}+e^{-t/2}-2)
\quad\text{and}\quad
r_1(t)
=
\frac{1}{4}\sum_{n=2}^{\infty}
\zeta(2-n,1/4)\frac{(-2|t|)^n}{n!}.
\]
Since $\widehat{r''}(z)$ cannot be defined directly,
we define $\widehat{r_a^{\prime\prime}}(z)$ by treating these two parts separately.
For $r_0$, we take the Fourier transform of
$
r_{0,a}^{\prime\prime}(x)
:=
r_0^{\prime\prime}(x)\mathbf{1}_{(-2a,2a)}(x),
$
whereas for $r_1$ we compute
$\widehat{r_1^{\prime\prime}}(z)$ explicitly.
We then set
$
\widehat{r_a^{\prime\prime}}(z)
:=
\widehat{r_{0,a}^{\prime\prime}}(z)
+
\widehat{r_1^{\prime\prime}}(z).
$
As shown below, all resulting terms except those arising from the polar factor $s(s-1)$ 
coincide with the explicit formula.

Using $\zeta(2-n,a)=-B_{n-1}(a)/(n-1)$ \cite[(25.11.14)]{NIST} 
and the generating series $ye^{ay}/(e^y-1) = \sum_{n=0}^\infty B_n(a)y^n/n!$ \cite[(9.621)]{GrRy07}, 
we have 
\[
r_1^{\prime\prime}(t) = 
\frac{e^{-|t|/2}}{1-e^{-2|t|}} -\frac{1}{2|t|}. 
\]
Hence, the Fourier transform is given by
\[
\aligned 
\widehat{r_1^{\prime\prime}}(z) 
& = 2 \int_0^\infty \left( \frac{e^{-t/2}}{1-e^{-2t}} - \frac{1}{2t} \right) \cos(zt) \, dt 
= \int_0^{\infty} \left( \frac{e^{-u/4}}{1-e^{-u}} - \frac{1}{u} \right) \cos(zu/2) \, du \\
& = \lim_{\substack{\epsilon \to +0 \\ R \to \infty}} \left[
\int_\epsilon^R \left( \frac{e^{-u/4} \cos(zu/2)}{1-e^{-u}} - \frac{e^{-u}}{u} \right) du 
+ \int_\epsilon^R \frac{e^{-u} - \cos(zu/2)}{u} \, du \right].
\endaligned 
\]
Differentiating \cite[(8.341.3)]{GrRy07} with respect to $z$, we obtain
\[
\psi(s) = \int_0^\infty \left( \frac{e^{-u}}{u} - \frac{e^{-su}}{1-e^{-u}} \right)\,du \quad (\mathrm{Re}(s)>0).
\]
By taking the real part for $s=1/4+iz/2$, we find
\[
\int_0^\infty \left( \frac{e^{-u/4} \cos(zu/2)}{1-e^{-u}} - \frac{e^{-u}}{u} \right) du
=  -\operatorname{Re}\left[\psi\left(\frac{1}{4} + \frac{iz}{2}\right)\right] 
\]
for real $z$. On the other hand, we have
\[
\int_\epsilon^\infty \frac{e^{-u}}{u} \, du - \int_\epsilon^\infty \frac{\cos(zu/2)}{u} \, du 
= \mathrm{E}_1(\epsilon) + \mathrm{Ci}(|z/2|\epsilon)
= \log |z| - \log 2+O(\epsilon)
\]
for real $z$
by $\mathrm{E}_1(\epsilon) = -C_0 - \log\epsilon+O(\epsilon)$ \cite[(6.6.2)]{NIST} 
and 
$\mathrm{Ci}(z) = C_0 + \log z+O(|z|)$ \cite[(6.6.6)]{NIST}.
Therefore, 
\[
\widehat{r_1^{\prime\prime}}(z)
=
-\operatorname{Re}\left[\psi\left(\frac{1}{4} + \frac{iz}{2}\right)\right] 
+
\log|z|
- \log 2 = O(|z|^{-2}) 
\quad |z| \to \infty. 
\]
It follows that
\[
\aligned 
Q_{B_a}(v)
&= \frac{1}{2\pi} \int_{-\infty}^{\infty} 
\left( \operatorname{Re}\left[ \psi\left(\frac{1}{4} + \frac{iz}{2}\right) \right]- \log\pi \right) 
|\widehat{v}(z)|^2 
dz \\
& \quad -
\frac{1}{2\pi} 
 \int_{-\infty}^{\infty} |\hat{v}(z)|^2 
\left[\sum_{n \leq \exp(2a)} \frac{\Lambda(n)}{\sqrt{n}} 2\cos(z\log n) \right] dz \\
& \quad 
- \frac{1}{2\pi} \int_{-\infty}^{\infty} 
\widehat{r_{0,a}^{\prime\prime}}(z) |\widehat{v}(z)|^2 dz.
\endaligned 
\]
This formula precisely reflects the structure of the explicit formula. 
From the explicit formula \cite[Lemma 3 and its proof]{Bo01}, we have
\begin{equation*} 
\aligned 
Q_W^{\,a}(v)
& =\frac{1}{2\pi} \int_{-\infty}^{\infty} 
\Re\left[\psi\left(\frac{1}{4}+\frac{iz}{2}\right) \right]
|\widehat{v}(z)|^2 dz 
-\log\pi \int_{-a}^{a}|v(x)|^2 dx \\ 
& \quad + O(a)\cdot \|v\|_{L^2}^2 , 
\endaligned 
\end{equation*}
where $O(a)$ denotes a bounded quantity depending on $a$.
Since
\begin{equation*} 
\text{Re} \left[ \psi(s/2) \right] 
= \log|s| - \log 2 + O(|s|^{-2}) \quad (\Re(s)>0)\quad |s| \to \infty,
\end{equation*}
we obtain
\begin{equation} \label{EQ_208}
Q_W^{\,a}(v)=\frac{1}{2\pi} \int_{-\infty}^{\infty} 
\left(\log|z| - \log (2\pi)   \right)
|\widehat{v}(z)|^2 dz 
+ O(a)\cdot \|v\|_{L^2}^2.
\end{equation}
This perfectly matches the result obtained from \eqref{EQ_207}.

\subsection{Distributional formulas} 

Since $g$ is continuous, the distributional second derivative $-g''$ is a tempered distribution.
By \eqref{EQ_108} and integration by parts,
\begin{equation} \label{EQ_209}
\aligned
Q_W^{\,a}(v)
&=
\int_{-a}^{a}\int_{-a}^{a}
g(x-y)v'(y)\overline{v'(x)}
\,dx\,dy \\
&=
\int_{-a}^{a}\int_{-a}^{a}
(-g''(x-y))v(y)\overline{v(x)}
\,dx\,dy
\endaligned
\end{equation}
for $v\in C_c^\infty(-a,a)$.
This was already observed in \cite[Section 3.5]{Su23}.
Combining this with \eqref{EQ_101}, we obtain
\begin{equation} \label{EQ_210}
(A_a v)(x)
=
\int_{-a}^{a}
(-g''(x-y))v(y)\,dy
\end{equation}
for $v\in C_c^\infty(-a,a)$. 
Since $g''$ is a tempered distribution, the right-hand side is well-defined. 
Formula \eqref{EQ_210} provides an alternative representation of Bombieri's Lagrangian 
(\cite[Lemma 1]{Bo01}, \cite[Sections 5--6]{Bo03}). 
Let us compute the right-hand side of \eqref{EQ_210}.
Since
\[
\frac{d^2}{dt^2}(|t|\log|t|)
=
{\rm Pf}\!\left(\frac{1}{|t|}\right)
+
2\delta(t)
\]
in the sense of distributions,
it follows from \eqref{EQ_202} that
\[
\aligned
-g''(t)
&=
-\frac12\,{\rm Pf}\!\left(\frac{1}{|t|}\right)
-(2A+1)\delta(t) \\
&\quad
-\sum_{n\ge2}
\frac{\Lambda(n)}{\sqrt n}
\bigl(\delta(t-\log n)+\delta(t+\log n)\bigr)
-r''(t).
\endaligned
\]
Hence
\begin{equation} \label{EQ_211}
\aligned
\int_{-a}^{a}
(-g''(x-y))v(y)\,dy
&=
-\frac12\,
{\rm Pf}\int_{-a}^{a}
\frac{v(y)}{|x-y|}
\,dy
-(2A+1)v(x) \\
&\quad
-\sum_{n\le e^{2a}}
\frac{\Lambda(n)}{\sqrt n}
\bigl(v(x-\log n)+v(x+\log n)\bigr) \\
&\quad
-\int_{-a}^{a}
r''(x-y)v(y)\,dy .
\endaligned
\end{equation}
Here we note that
\[
\aligned
\int_{-a}^{a}
&
\left(
{\rm Pf}\int_{-a}^{a}
\frac{v(y)}{|x-y|}
\,dy
\right)
\overline{v(x)}
\,dx \\
&=
-\frac12
\int_{-a}^{a}
\int_{-a}^{a}
\frac{|v(x)-v(y)|^2}{|x-y|}
\,dx\,dy
+
\int_{-a}^{a}
\log(a^2-x^2)|v(x)|^2
\,dx 
\endaligned
\]
which follows by a direct symmetrization argument.
Using this identity, one verifies that
\eqref{EQ_209} follows from \eqref{EQ_205}.

For a general element
$v\in\mathfrak{D}(A_a)\subset\mathfrak{D}(Q_W^{\,a})$,
pointwise values need not be well defined, so an expression such as
\eqref{EQ_211} is not meaningful.
However, by \eqref{EQ_207},
identity \eqref{EQ_209} remains valid for
$v\in\mathfrak{D}(A_a)$
in the sense of limits of values on elements of
$C_c^\infty(-a,a)$
with respect to the form norm of $Q_W^{\,a}$.

%
\section{Proof of Theorem~\ref{thm_1}} \label{section_3}
%

\subsection{The Relation between $A_a$ and $B_a$} 

To compare the operators $A_a$ and $B_a$, we begin by recalling some
basic facts concerning the quadratic form $Q_W^{\,a}$. 
For general background on quadratic forms, we refer to Schm\"udgen~\cite[Chapter 10]{Sch12}.

\medskip

Let $\Gamma$ denote the set of all zeros of the function
$z \mapsto \xi(1/2-iz)$.
For each $\gamma \in \Gamma$, let $m_\gamma$ be the multiplicity of $\gamma$
as a zero of $\xi(1/2-iz)$.
By the functional equations
$\xi(1-s)=\xi(s)$ and $\xi(s)=\overline{\xi(\bar{s})}$,
the set $\Gamma$ is invariant under
$\gamma \mapsto -\gamma$ and $\gamma \mapsto \bar{\gamma}$.
It is customary to write the zeros of $\xi(s)$ as
$\rho=\beta+i\gamma$ ($\beta,\gamma\in\mathbb R$),
but our notation differs from this convention.
In particular, an element $\gamma\in\Gamma$ may be non-real.
RH is equivalent to the inclusion
$\Gamma\subset\mathbb R$.
In this notation, the Weil explicit formula yields 
\begin{equation} \label{EQ_301}
Q_W(v_1,v_2)
=
\sum_{\gamma\in\Gamma}
m_\gamma
\widehat{v_1}(\gamma)
\overline{\widehat{v_2}(\bar{\gamma})}
\end{equation}
(cf. \cite[(1.2), (3.3)]{Su25}).
The form domain of the closed symmetric quadratic form $Q_W^{\,a}$ is defined by
\[
\mathfrak{D}(Q_W^{\,a})
:=
\{\,v\in L^2(-a,a)\mid |Q_W^{\,a}(v)|<\infty\,\},
\]
and becomes a Hilbert space with respect to the form norm
\[
\|v\|_{Q_W^{\,a}}^2
:=
Q_W^{\,a}(v)
+
(1-\lambda_a)\|v\|_{L^2}^2,
\]
where $\lambda_a$ is the constant defined in \eqref{EQ_107}.
By \eqref{EQ_206} and \eqref{EQ_208}, if we define
\[
\aligned
H^{\log}(-a,a)
:&=
\left\{
v\in L^2(-a,a)\ \middle|\
\int_{-\infty}^{\infty}
(1+\log^+|z|)
|\widehat v(z)|^2\,dz
<\infty
\right\}
\\
&
\supset
\{\,v\in L^2(-a,a)\mid \mathcal L_a(v)<\infty\,\},
\endaligned
\]
then
\[
\mathfrak{D}(Q_W^{\,a})
\subset
H^{\log}(-a,a).
\]

In \cite[Theorem 5.1]{Bo03}, Bombieri appears to state implicitly that
$\mathfrak{D}(Q_W^{\,a})$ coincides with the set of all
$v\in L^2(-a,a)$ such that
\[
\int_{-a}^{a}\int_{-a}^{a}
\frac{|v(x)-v(y)|^2}{|x-y|}
\,dx\,dy
+
\int_{-a}^{a}|v(x)|^2\,dx
\]
is finite.
However, taking into account the equivalence between
\eqref{EQ_203} and \eqref{EQ_206},
one finds that this space is in fact slightly larger than
$\mathfrak{D}(Q_W^{\,a})$.

\medskip

The fact that $Q_W^{\,a}(v)$ is lower bounded and lower semicontinuous on
$L^2(-a,a)$ was proved in \cite[Proposition 2.1]{CC23}.
It is also mentioned in the proof of \cite[Lemma 3]{Bo01} and is implicitly contained in \cite[Section 2]{Yo92}.
All of these arguments are based on the formula \eqref{EQ_208}.
The lower semicontinuity follows immediately from an application of Fatou's lemma.

We now recall the self-adjoint operator $A_a$ associated with
$Q_W^{\,a}$ through \eqref{EQ_101}.
Such a self-adjoint operator is uniquely determined.
Moreover,
$v\in\mathfrak{D}(A_a)\subset\mathfrak{D}(Q_W^{\,a})$
if and only if the map
$
w
\longmapsto
Q_W^{\,a}(v,w)
$
is continuous on $\mathfrak{D}(Q_W^{\,a})$ with respect to the $L^2$-norm. 
By the Riesz representation theorem, this is equivalent to
\cite[Definition 10.4]{Sch12}.
In this case, $A_av$ is uniquely characterized by
$
Q_W^{\,a}(v,w)
=
\langle A_av,w\rangle_{L^2}
$ ($w\in\mathfrak{D}(Q_W^{\,a})$).

\begin{lemma} \label{lem_3_1}
$B_a \subset A_a$.
In particular, $A_a$ is a self-adjoint extension of $B_a$.
\end{lemma}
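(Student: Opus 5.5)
To prove $B_a\subset A_a$ we use the characterization of $\mathfrak{D}(A_a)$ recalled just before the lemma. A vector $v\in\mathfrak{D}(Q_W^{\,a})$ lies in $\mathfrak{D}(A_a)$, with $A_av=h$, as soon as $Q_W^{\,a}(v,w)=\langle h,w\rangle_{L^2}$ for all $w$ in $\mathfrak{D}(Q_W^{\,a})$. It is enough to verify this on a core of the closed form, which we take to be $C_c^\infty(-a,a)$ or $H_0^1(-a,a)$, and then pass to the limit in the form norm. So for fixed $v\in H_0^1(-a,a)$ we have three tasks: show $v\in\mathfrak{D}(Q_W^{\,a})$, set $h:=B_av=D^\ast G_aDv\in L^2(-a,a)$, and prove $Q_W^{\,a}(v,w)=\langle B_av,w\rangle_{L^2}$ for $w$ in the core.

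The first step is the identity \eqref{EQ_108} and its polarized version on smooth functions. For $v,w\in C_c^\infty(-a,a)$, write $v_1\ast\widetilde{w}$ and apply the explicit formula \eqref{EQ_301} (or \cite[Proposition 3.1]{Su23}). Two integrations by parts move the two derivatives from the distribution $-g''$ onto the test functions, as in \eqref{EQ_209}. This gives
\[
Q_W(v,w)=\int_{-a}^a\int_{-a}^a g(x-y)v'(y)\overline{w'(x)}\,dx\,dy .
\]
Since $v'$ and $w'$ have mean zero, we may insert $P_a$ on both sides, and the right-hand side becomes $\langle G_aDv,Dw\rangle_{L^2}$. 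The boundary terms vanish because $w\in H_0^1$. Moving $D$ across as its adjoint then gives $\langle D^\ast G_aDv,w\rangle=\langle B_av,w\rangle$; this uses $G_aDv\in H^1(-a,a)=\mathfrak{D}(D^\ast)$, which was noted in Section~\ref{section_2}.

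The second step extends the identity from $C_c^\infty$ to $v,w\in H_0^1(-a,a)$. Take $v_n\to v$ in $H^1$. Then $\langle G_aDv_n,Dw_n\rangle$ converges, because $G_a$ is bounded on $L^2$ (its kernel is continuous on a compact set). The form values also converge. To see this, use \eqref{EQ_205}/\eqref{EQ_206}: these show that $Q_W^{\,a}$ is dominated by $\int(1+\log^+|z|)|\widehat v|^2$ plus $L^2$ terms, and this quantity is controlled by the $H^1$ norm. Hence $H^1$-convergence implies form-norm convergence, so $H_0^1\subset\mathfrak{D}(Q_W^{\,a})$ and the polarized identity holds there.

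The third step extends in $w$ from $H_0^1$ (or $C_c^\infty$) to all of $\mathfrak{D}(Q_W^{\,a})$. For fixed $v\in H_0^1$, both sides of $Q_W^{\,a}(v,w)=\langle B_av,w\rangle$ are continuous in $w$ with respect to the form norm. The left side is continuous by Cauchy--Schwarz for the shifted positive form $Q_W^{\,a}+(1-\lambda_a)\|\cdot\|^2$. The right side is continuous because it is $L^2$-continuous and the form norm dominates the $L^2$ norm. It therefore suffices that $C_c^\infty(-a,a)$ is a core for $Q_W^{\,a}$, i.e.\ dense in $\mathfrak{D}(Q_W^{\,a})$ in form norm.

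I expect this density to be the main obstacle. I would prove it on the Fourier side via \eqref{EQ_208}. Elements of $\mathfrak{D}(Q_W^{\,a})$ have finite $\int\log|z|\,|\widehat v|^2$, and we may approximate them by dilation $v(x)\mapsto v(x/s)$ with $s<1$, which keeps the support inside $[-sa,sa]\subset(-a,a)$, followed by mollification. Both operations converge in the weighted norm $\int(1+\log^+|z|)|\widehat v|^2$, by dominated convergence. The finitely many prime terms and the $r''$ term are $L^2$-bounded perturbations, so they are harmless. Granting the density, $v\in\mathfrak{D}(A_a)$ with $A_av=B_av$, which gives $B_a\subset A_a$. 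Since $A_a$ is self-adjoint, it is therefore a self-adjoint extension of $B_a$.
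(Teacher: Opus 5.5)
Your argument is correct, but it follows a different route from the paper's.

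\textbf{The paper's route.} The paper never tests against $C_c^\infty(-a,a)$ in this lemma. It uses the trigonometric span $E=\mathrm{span}\{e^{i\pi nx/a}\}$, which is a form core for $Q_W^{\,a}$ by \cite[Proposition 2.3]{CC23}. It then verifies $Q_W^{\,a}(v,w)=\langle B_av,w\rangle_{L^2}$ for $v\in H_0^1(-a,a)$ and $w\in E$ on the Fourier side, via the explicit formula \eqref{EQ_301}. This uses only $\widehat v,\widehat w=O(|z|^{-1})$ and $\widehat v=z^{-1}\widehat{Dv}$. It concludes with \cite[Proposition 10.5(v)]{Sch12}. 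Because elements of $E$ do not vanish at $\pm a$, a plain integration by parts is not available there, which is why the paper works with Fourier transforms.

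\textbf{Your route.} You test against $C_c^\infty(-a,a)$, where the identity is just two integrations by parts with no boundary terms. The price is that you must show $C_c^\infty(-a,a)$ is itself a form core. That density statement is exactly \eqref{EQ_302}, which the paper proves only afterwards, in the proof of Theorem~\ref{thm_1}, by cutting off each $e_n$ near the endpoints.

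\textbf{What each buys.} Your proof is self-contained, since it does not need \cite{CC23}. It also essentially delivers Theorem~\ref{thm_1} and Corollary~\ref{cor_2} together with the lemma. Your dilation--mollification approximation handles every element of $H^{\log}(-a,a)$ directly, not only the $e_n$. It correctly uses \eqref{EQ_208} to make the form norm equivalent to $\int(1+\log^+|z|)|\widehat v|^2\,dz$. The paper's split instead keeps the lemma short by outsourcing the core.

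\textbf{One technical point.} In the dilation step, plain dominated convergence is not directly available. An $s$-independent integrable majorant for $s^2|\widehat v(sz)|^2(1+\log^+|z|)$ need not exist, because the supremum over $s$ samples $|\widehat v|$ along intervals of length comparable to $|z|$. Argue instead as follows.
\begin{itemize}
\item The maps $f\mapsto sf(s\,\cdot)$ are uniformly bounded on $L^2(\R,(1+\log^+|z|)\,dz)$ for $s\in[1/2,1]$, since $\log^+|y/s|\le\log^+|y|+\log(1/s)$.
\item They converge strongly to the identity on the dense subspace $C_c(\R)$, hence strongly on the whole space.
\end{itemize}
The mollification step is fine as written: the Fourier transform of a nonnegative unit-mass mollifier is bounded by $1$ and tends to $1$ pointwise, so ordinary dominated convergence applies there.
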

\begin{proof}
Let $E \subset L^2(-a,a)$ be the linear subspace generated by
\[
e_n(x)=\exp(i\pi n x/a),
\qquad n\in\mathbb Z.
\]
The space $E$ is a core for the quadratic form $Q_W^{\,a}$
\cite[Proposition 2.3]{CC23}.
On the other hand, if $v\in \mathfrak{D}(B_a)=H_0^1(-a,a)$, then
$\widehat v(z)=O(|z|^{-1})$ as $|z|\to\infty$.
Hence, by \eqref{EQ_301},
$
\mathfrak D(B_a)
\subset
\mathfrak D(Q_W^{\,a}). 
$ 
Moreover,
\[
\widehat{e_n}(z)
=
\frac{2\sin(az+n\pi)}{z+n\pi/a},
\]
so that $\widehat w(z)=O(|z|^{-1})$ for every $w\in E$.
If $u=Dv$, then
$\widehat v(z)=z^{-1}\widehat u(z)$.
Therefore, by \eqref{EQ_301} and \cite[(3.1)]{Su23},
$
Q_W^{\,a}(v,w)
=
\langle B_a v,w\rangle_{L^2}
$ 
for all
$v\in H_0^1(-a,a)=\mathfrak D(B_a)$
and
$w\in E$. 
It follows from \cite[Proposition 10.5\,(v)]{Sch12}
that $B_a\subset A_a$.
\end{proof}

\subsection{Completion of the proof}

For simplicity, we write
$A=A_a$, $B=B_a$, $Q_B(v)=\langle Bv, v \rangle_{L^2}$,
$Q=Q_W^{\,a}$, and
$\|v\|_{Q}^2 = Q(v) + (1-\lambda_a)\|v\|_{L^2}^2$. 
Note that $Q=Q_B$ on $\mathfrak{D}(B_a)$. 
The Friedrichs extension of the operator $B$
is the uniquely determined self-adjoint extension associated with
the closure $\overline{Q}_B$ of the quadratic form $Q_B$.
The domain $\mathfrak{D}(\overline{Q}_{B})$ of $\overline{Q}_B$
is defined as the completion of the form domain
$\mathfrak{D}(Q_{B}):=\mathfrak{D}(B)=H_0^1(-a,a)$
with respect to the form norm $\|\cdot\|_Q$.
Since $B \subset A$ (Lemma~\ref{lem_3_1}),
we have
$\mathfrak{D}(\overline{Q}_{B}) \subset \mathfrak{D}(Q_{A})=\mathfrak{D}(Q)$.
Hence, $\mathfrak{D}(\overline{Q}_{B})$ is a closed subspace of
$\mathfrak{D}(Q)$ with respect to the form norm.
On the other hand, the space $E$
(in the proof of Lemma~\ref{lem_3_1})
is a core for the quadratic form $Q$
(i.e. $\overline{E}^{\,\|\cdot\|_Q} = \mathfrak{D}(Q)$).
Since $C_c^\infty(-a,a) \subset H_0^1(-a,a)$,
if
\begin{equation} \label{EQ_302}
E \subset \overline{C_c^\infty(-a,a)}^{\,\|\cdot\|_Q},
\end{equation}
then
$\mathfrak{D}(\overline{Q}_{B})=\mathfrak{D}(Q)$.
It follows that the Friedrichs extension of $B$ coincides with $A$.
Indeed, by \cite[Corollary 10.8]{Sch12},
on a given Hilbert space, lower bounded self-adjoint operators
are in one-to-one correspondence with lower bounded closed symmetric
quadratic forms having dense domains.
\medskip

To prove the inclusion \eqref{EQ_302}, it suffices to show that each function
$e_n \in E$ belongs to the closure of $C_c^\infty(-a,a)$ 
with respect to the form norm $\|\cdot\|_Q$.
In other words, we must construct a sequence
$v_\epsilon \in C_c^\infty(-a,a)$ 
such that $\|e_n-v_\epsilon\|_Q \to 0$ as $\epsilon \to 0$. 
For a sufficiently small parameter $\epsilon>0$, we define a cutoff function
$\eta_\epsilon(x)$ which decays to $0$ near the boundary of the interval
$[-a,a]$. More precisely, $\eta_\epsilon(x)=1$ on
$[-a+\epsilon,a-\epsilon]$, and $\eta_\epsilon(\pm a)=0$. 
On the boundary intervals $[-a,-a+\epsilon]$ and $[a-\epsilon,a]$, 
$\eta_\epsilon$ is chosen as a smooth monotone cutoff interpolating between these values,
with $|\eta_\epsilon'(x)|\ll \epsilon^{-1}$.
Define $v_\epsilon(x):=e_n(x)\eta_\epsilon(x)$.
Then $v_\epsilon$ belongs to $C_c^\infty(-a,a)$.
It therefore remains to show that the form norm of the error function
$w_\epsilon(x) = e_n(x)(1 - \eta_\epsilon(x))$ 
tends to zero as $\epsilon\to0$.

The function $w_\epsilon$ is supported only on the two intervals
$[-a,-a+\epsilon]$ and $[a-\epsilon,a]$, each of length $\epsilon$.
Moreover, $|w_\epsilon(x)|\le 1$.
Therefore,
\[
\|w_\epsilon\|_{L^2}^2
\le
\int_{-a}^{-a+\epsilon}1\,dx
+
\int_{a-\epsilon}^{a}1\,dx
=
2\epsilon.
\]
Hence $\|w_\epsilon\|_{L^2}\to0$ as $\epsilon\to0$. 

By \eqref{EQ_207} and Plancherel's theorem, we have
\[
\|w_\epsilon\|_Q^2
=
\int_{-\infty}^{\infty}
(1+\log^+|z|)\,
|\widehat{w_\epsilon}(z)|^2\,dz
+
O(a)\cdot\|w_\epsilon\|_{L^2}^2.
\]
Therefore, the principal term in $\|w_\epsilon\|_Q^2$ is
\[
I_\epsilon
=
\int_{-\infty}^{\infty}
\log(3+|z|)
|\widehat{w_\epsilon}(z)|^2\,dz.
\]
To estimate it, we examine the behavior of $\widehat{w_\epsilon}$.
First, it follows immediately from the definition of the Fourier transform that
\begin{equation} \label{EQ_303}
|\widehat{w_\epsilon}(z)|
\le
\int_{\mathbb R}|w_\epsilon(x)|\,dx
\le
2\epsilon.
\end{equation}
On the other hand,
$|\widehat{w_\epsilon}(z)|=O(|z|^{-1})$ as $|z|\to\infty$, 
uniformly in $\epsilon$. 
Indeed, integrating by parts in
$\widehat{w_\epsilon}(z)=\int_{-a}^{a}w_\epsilon(x)e^{izx}\,dx$,
we obtain
\[
\widehat{w_\epsilon}(z)
=
\left[
w_\epsilon(x)\frac{e^{izx}}{iz}
\right]_{-a}^{a}
-
\int_{-a}^{a}
w_\epsilon'(x)\frac{e^{izx}}{iz}\,dx.
\]
Here,
$w_\epsilon(a)=e_n(a)=(-1)^n$
and
$w_\epsilon(-a)=e_n(-a)=(-1)^n$.
Moreover,
$
w_\epsilon'(x)
=
e_n'(x)(1-\eta_\epsilon(x))
-
e_n(x)\eta_\epsilon'(x).
$
Since $|\eta_\epsilon'(x)|\ll \epsilon^{-1}$ on intervals of total length $2\epsilon$,
we have
$\int |\eta_\epsilon'(x)|\,dx=O(1)$.
Hence
\[
\|w_\epsilon'\|_{L^1}
\le
C_1\epsilon+O(1)
\le
C_2,
\]
where $C_2$ is independent of $\epsilon$.
Consequently, there exists a constant $M>0$ independent of $\epsilon$ such that
\begin{equation} \label{EQ_304}
|\widehat{w_\epsilon}(z)|
\le
\frac{
|w_\epsilon(a)|
+
|w_\epsilon(-a)|
+
\|w_\epsilon'\|_{L^1}
}{|z|}
\le
\frac{M}{|z|}.
\end{equation}

Finally, we show that $I_\epsilon\to0$.
We split the domain of integration into two parts at
$|z|=1/\epsilon$. 
For $|z|\le 1/\epsilon$, \eqref{EQ_303} gives
\[
\int_{|z|\le 1/\epsilon}
\log(3+|z|)
|\widehat{w_\epsilon}(z)|^2\,dz
\le
(2\epsilon)^2
\int_{-1/\epsilon}^{1/\epsilon}
\log(3+|z|)\,dz
\ll
\epsilon\log(1/\epsilon).
\]
For $|z|>1/\epsilon$, \eqref{EQ_304} gives
\[
\int_{|z|>1/\epsilon}
\log(3+|z|)
|\widehat{w_\epsilon}(z)|^2\,dz
\le
2
\int_{1/\epsilon}^{\infty}
\log(3+z)\frac{M^2}{z^2}\,dz
\ll
\epsilon\log(1/\epsilon).
\]
Combining these estimates, we obtain
\[
\|e_n-v_\epsilon\|_Q^2
=
I_\epsilon
+
\|w_\epsilon\|_{L^2}^2
\ll
\epsilon\log(1/\epsilon)
\to0
\qquad
(\epsilon\to0).
\]
Therefore,
$v_\epsilon\in C_c^\infty(-a,a)$ converges to $e_n$
with respect to the form norm $\|\cdot\|_Q$.
Hence every 
$e_n\in E$ belongs to
$\overline{C_c^\infty(-a,a)}^{\,\|\cdot\|_Q}
\,(\subset \mathfrak D(\overline Q_B))$,
and the inclusion \eqref{EQ_302} follows.
\hfill $\Box$

%
\section{Proof of Theorem~\ref{thm_3}} \label{section_4}
%

\subsection{Compact embedding result} 

The following proposition is the main analytic ingredient in the proof of Theorem~\ref{thm_3}.

\begin{proposition} \label{prop_260309}
For a bounded interval $I$, the embedding
$
H^{\log}(I)\hookrightarrow L^2(I)
$ 
is compact.
In other words, 
every bounded sequence in $H^{\log}(I)$ admits a subsequence converging in $L^2(I)$.
\end{proposition}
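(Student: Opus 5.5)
The plan is a Fourier-analytic version of the Rellich--Kondrachov argument, splitting frequencies into a low band and a high band. Let $(v_k)$ be a bounded sequence in $H^{\log}(I)$. After translating, assume $I\subset(-a,a)$, so that each $v_k$ is supported in $[-a,a]$. Set
\[
M:=\sup_k\int_{-\infty}^{\infty}(1+\log^+|z|)\,|\widehat{v_k}(z)|^2\,dz<\infty .
\]
By Plancherel, $\|v\|_{L^2}^2=(2\pi)^{-1}\|\widehat v\|_{L^2}^2$. It therefore suffices to extract a subsequence whose Fourier transforms are Cauchy in $L^2(\mathbb R)$.

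First, control the high frequencies uniformly. For every $R>1$ and every $k$,
\[
\int_{|z|>R}|\widehat{v_k}(z)|^2\,dz\le \frac{M}{\log R}.
\]
This tail tends to $0$ as $R\to\infty$, uniformly in $k$. The decay is only logarithmic, but uniform smallness is all the argument needs.

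Second, handle the low frequencies using the compact support. Since $\operatorname{supp}v_k\subset[-a,a]$, each $\widehat{v_k}$ is entire, and by Cauchy--Schwarz
\[
|\widehat{v_k}(z)|\le \sqrt{2a}\,e^{a|\operatorname{Im}z|}\,\|v_k\|_{L^2}\le C\sqrt M\, e^{a|\operatorname{Im} z|}.
\]
The same bound holds for $\widehat{v_k}'(z)=\int ix\,v_k(x)e^{izx}\,dx$, with an extra factor $a$. On each compact set $[-R,R]$ the family $(\widehat{v_k})$ is therefore uniformly bounded and equicontinuous. By Arzelà--Ascoli (or Montel's theorem for the entire functions), it has a subsequence converging uniformly on $[-R,R]$. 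A diagonal argument over $R=1,2,3,\dots$ then gives a single subsequence, still denoted $v_k$, with $\widehat{v_k}$ converging uniformly on every compact interval.

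Finally, combine the two estimates. For $\varepsilon>0$, choose $R$ with $4M/\log R<\varepsilon$. Then
\[
\int|\widehat{v_k}-\widehat{v_l}|^2\,dz\le 2R\sup_{[-R,R]}|\widehat{v_k}-\widehat{v_l}|^2+\varepsilon ,
\]
because the tail contributes at most $2\bigl(M/\log R+M/\log R\bigr)$. The first term tends to $0$ as $k,l\to\infty$. Hence $(v_k)$ is Cauchy in $L^2(I)$ and converges there.

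I expect no real obstacle. The only point needing care is that the weight $1+\log^+|z|$ gives merely logarithmic tail decay, much weaker than Sobolev decay. The argument works because compactness only requires the tail to vanish uniformly in $k$, which any weight tending to infinity provides. The compact support (Paley--Wiener) is what supplies the local equicontinuity of the transforms.
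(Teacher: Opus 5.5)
Your proof is correct and complete: the uniform tail bound $\int_{|z|>R}|\widehat{v_k}(z)|^2\,dz\le M/\log R$ together with local uniform compactness of the entire transforms $\widehat{v_k}$ (from the bounded support) gives the required $L^2$-Cauchy subsequence. The paper gives no argument of its own and only says the proof is essentially that of \cite[Theorem 3.6]{CCM25}; your proposal is a self-contained version of the standard compactness argument behind that citation, combining bounded support with a Fourier weight that tends to infinity.
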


\begin{proof}
The proof is essentially the same as that of \cite[Theorem 3.6]{CCM25}.
\end{proof}

This compact embedding is also the key ingredient 
in the proof of the discreteness of the spectrum of
$A_a$ in \cite[Theorem 3.6]{CCM25}. 
By \cite[Proposition 10.6]{Sch12}, 
it suffices to prove that the embedding
$ 
(\mathfrak{D}(Q_W^{\,a}),\|\cdot\|_{Q_W^{\,a}})
\hookrightarrow
(L^2(-1,1),\|\cdot\|_{L^2})
$ 
is compact.
This follows readily from
Proposition~\ref{prop_260309}. 

\subsection{Scaling transformation of the Rayleigh quotient}

Using \eqref{EQ_108}, 
we transfer the Rayleigh quotient in \eqref{EQ_107} 
to the fixed interval $[-1,1]$ by a scaling transformation.
From definition \eqref{EQ_106}, 
the Rayleigh quotient associated with $B_a$ for $v \in H_0^1(-a, a)$ is given by
\[
\frac{\langle B_a v, v \rangle_{L^2}}{\|v\|_{L^2}^2} 
=
\frac{\langle G_a Dv, Dv \rangle_{L^2}}{\|v\|_{L^2}^2} 
=
\frac{\displaystyle \int_{-a}^a \int_{-a}^a g(x-y) v'(y) \overline{v'(x)} \, dx dy}{\displaystyle \int_{-a}^a |v(x)|^2 dx}.
\]
Setting $w(t)=v(at)$ on the fixed interval $[-1,1]$, we define
\begin{equation} \label{EQ_401}
q_a(w):=
 \int_{-1}^1 \int_{-1}^1 \frac{1}{a} g(a(x-y))
 w'(x)\overline{w'(y)} \,dx\,dy .
\end{equation}
Then a change of variables gives
\begin{equation} \label{EQ_402}
\frac{\langle B_a v, v \rangle_{L^2}}{\|v\|_{L^2}^2}
=
\frac{q_a(w)}{\|w\|_{L^2}^2},
\qquad
w(t)=v(at)\in H_0^1(-1,1).
\end{equation}
We write
\begin{equation} \label{EQ_403}
R(a,w):=\frac{q_a(w)}{\|w\|_{L^2}^2}.
\end{equation}

For the second term on the right-hand side of \eqref{EQ_203}, we have
\[
\int_{-a}^{a} |v(x)|^2 \log(a^2-x^2) \,dx
=  2\|v\|_{L^2}^2\log a + \int_{-1}^{1} |v(ax)|^2 \log(1-x^2) \,dx. 
\]
Thus, defining
\begin{equation} \label{EQ_404} 
\begin{aligned}
\mathcal{L}(w)
:=
\frac{1}{4}
\int_{-1}^1
\int_{-1}^1
\frac{|w(x)-w(y)|^2}{|x-y|}
\,dx\,dy
-
\frac{1}{2}
\int_{-1}^1
|w(x)|^2\log(1-x^2)\,dx 
\end{aligned}
\end{equation}
for $v \in H_0^1(-1,1)$ 
(which should not be confused with the case $a=1$ of \eqref{EQ_203}), 
we deduce from \eqref{EQ_205} that
\begin{equation} \label{EQ_405}
\aligned 
R(a,w)
& = - \log a -(2A+1) + \frac{\mathcal{L}(w)}{\|w\|_{L^2}^2} \\
& \quad-\frac{1}{\|w\|_{L^2}^{2}}
\sum_{n\le e^{2a}}
\frac{\Lambda(n)}{\sqrt{n}}
\Biggl(
\int_{-1}^{\,1-\frac{\log n}{a}}
w\left(t+\frac{\log n}{a}\right)
\overline{w(x)}
\,dx
\\
&\qquad\qquad\qquad\qquad\qquad\qquad
+
\int_{-1+\frac{\log n}{a}}^{\,1}
w\left(x-\frac{\log n}{a}\right)
\overline{w(x)}
\,dx
\Biggr) \\
& \quad - \frac{a}{\|w\|_{L^2}^2} 
 \int_{-1}^1 \int_{-1}^1 r''(a(x-y)) w(y) \overline{w(x)} \, dx dy
\endaligned 
\end{equation}
for $w \in H_0^1(-1,1)$.

For the quadratic form $\mathcal{L}$ defined by \eqref{EQ_404} 
for $w \in \mathfrak{D}(\mathcal{L}):=H_0^1(-1,1)$, 
one can show in the same way as in the proof of \eqref{EQ_206} that
\begin{equation} \label{EQ_406}
\mathcal L(w)
=
\frac{1}{2\pi}
\int_{-\infty}^{\infty}
\bigl(
\log|z|+C_0
\bigr)|\widehat w(z)|^2
\,dz .
\end{equation}
Hence the form norm $\|w\|_{\mathcal L}^2:=\mathcal L(w)+\|w\|_{L^2}^2$ 
is equivalent to $\int_{-\infty}^{\infty}(1+\log^+|z|)|\widehat{w}(z)|^2 \,dz$. 
Therefore, the completion of $\mathfrak{D}(\mathcal{L})$ with respect to $\|\cdot\|_{\mathcal L}$ 
can be identified with a subspace of $H^{\log}(-1,1)$.
In particular, formula \eqref{EQ_406} is valid not only 
for $w \in \mathfrak{D}(\mathcal{L})$, but also for $w \in \mathfrak{D}(\overline{\mathcal{L}})$.
\medskip

To prove Theorem~\ref{thm_3}, 
it remains to establish the upper semicontinuity
\begin{equation} \label{EQ_407}
\limsup_{a \to a_0} \lambda_a 
\leq \lambda_{a_0}. 
\end{equation}
and lower semicontinuity
\begin{equation} \label{EQ_408}
\liminf_{n \to \infty} \lambda_{a_n} \ge \lambda_{a_0}. 
\end{equation}
From \eqref{EQ_402} and Theorem~\ref{thm_1}, 
we obtain for the Rayleigh quotient associated with the closure $\bar{q}_a$ of $q_a$ that
\begin{equation} \label{EQ_409}
\frac{Q_W^{a}(v)}{\|v\|_{L^2}^2} 
=\frac{\bar{q}_a(w)}{\|w\|_{L^2}^2},  \quad v(at)=w(t) \in \mathfrak{D}(\bar{q}_a).
\end{equation}
By \eqref{EQ_405}, \eqref{EQ_406}, and the definition of $H^{\log}(-1,1)$, 
we obtain 
$\mathfrak{D}(\bar{q}_a) \subset H^{\log}(-1,1)$ for every $a>0$. 
Hence, by \eqref{EQ_107} and \eqref{EQ_409}, 
it suffices to study the infimum of
\[
\bar{R}(a,w) := \frac{\bar{q}_a(w)}{\|w\|_{L^2}^2}.
\]

\subsection{Proof of upper semicontinuity}

Let $q_a$ be the quadratic form defined in \eqref{EQ_401}, 
and let $R(a,w)$ be the Rayleigh quotient defined in \eqref{EQ_403}.
We choose $w_0 \in \mathfrak{D}(\bar{q}_{a_0}) \subset H^{\log}(-1,1)$ 
such that $\lambda_{a_0} = \bar{R}(a_0, w_0)$, 
and normalize it by $\|w_0\|_{L^2}=1$.
To avoid referring to pointwise values of $w_0$ and $\widehat{w_0}$, 
we choose a sequence $w_n \in C_c^\infty(-1,1)$ 
such that
$w_n \to w_0$ 
with respect to the form norm $\|\cdot\|_0$ 
(the form norm associated with $q_{a_0}$), and $\|w_n\|_{L^2}=1$.
Such a sequence exists by the proof of Theorem~\ref{thm_1}.
Since $C_c^\infty(-1,1) \subset \mathfrak{D}(q_a)$, 
each $w_n$ belongs to $\mathfrak{D}(q_a)$ for every $a$, 
and $R(a,w_n)$ can be expressed in terms of $w_n$ and $\widehat{w_n}$.

Fix $n$. 
By the definition of the infimum 
$\lambda_a \leq R(a,w_n)$ for every $a$.
Moreover, 
\eqref{EQ_405} implies that 
$R(a,w_n) \to R(a_0,w_n)$ as $a \to a_0$. 
Since $\|w_n\|_{L^2}=1$, we obtain
\[
\limsup_{a \to a_0} \lambda_a \leq R(a_0, w_n) = q_{a_0}(w_n).
\]
Passing to the limit as $n\to\infty$, we obtain
\[
\limsup_{a \to a_0} \lambda_a 
\leq 
\lim_{n \to \infty} q_{a_0}(w_n)
= \bar{q}_{a_0}(w_0)
= \bar{R}(a_0,w_0)
= \lambda_{a_0}.
\]
Thus, \eqref{EQ_407} holds. \hfill $\Box$

\subsection{Proof of lower semicontinuity}

By \eqref{EQ_405}, we decompose 
$\bar{q}_a=\bar{q}^0+\bar{q}_a^1$, 
where
$
\bar{q}^0(w)
=
\overline{\mathcal L}(w)
-(2A+1)\|w\|_{L^2}^2
$
and $\bar{q}_a^1$ denotes the remaining terms in
\eqref{EQ_405}.
Let $(a_n)$ be a sequence with $a_n \to a_0$, 
and let $w_n \in \mathfrak{D}(\bar{q}_{a_n}) \subset H^{\log}(-1,1)$ 
be a sequence such that $\lambda_{a_n} = \bar{R}(a_n, w_n)$ and $\|w_n\|_{L^2}=1$.
We first show that there exists a subsequence of $(w_n)$ 
which converges in $L^2$ to some limit $w_\ast$.
Fix an arbitrary $w \in H_0^1(-1,1) \subset H^{\log}(-1,1)$. 
Then $\lambda_{a_n} \leq R(a_n,w)$, 
and since $R(a,w)$ is continuous in $a$ by the formula \eqref{EQ_405}, 
the sequence $(\lambda_{a_n})$ is  bounded above.

Since 
$\lambda_{a_n} = \bar{q}_{a_n}(w_n)$ by $\lambda_{a_n} = \bar{R}(a_n,w_n)$ and $\|w_n\|_{L^2}=1$,
the sequence
$\bar q_{a_n}(w_n)$
is bounded above. 
As $(a_n)$ remains in a compact neighborhood of $a_0$,
the estimate
$\bar q_a^1(v)=O(\|v\|_{L^2}^2)$
holds uniformly in $a$.
It follows that the terms other than
$\overline{\mathcal L}(w_n)$
in \eqref{EQ_405}
are uniformly bounded.
Consequently,
$\overline{\mathcal L}(w_n)$ is bounded above.
Since $\overline{\mathcal L}$ is bounded below by definition,
the sequence
$\overline{\mathcal L}(w_n)$
is bounded.
Together with \eqref{EQ_406},
this shows that $(w_n)$ is bounded in $H^{\log}(-1,1)$.
Proposition~\ref{prop_260309} therefore implies that
$(w_n)$ is relatively compact in $L^2$,
and we may extract a subsequence converging in $L^2$. 
We relabel it again as $(w_n)$ and denote its limit by $w_\ast$.

The explicit expression for $\bar q_a^1$ shows that it is a finite sum
of quadratic forms associated with translation operators and integral
operators with continuous kernels. 
Thus $\bar q_a^1(w)$ depends continuously on $(a,w)$ near $(a_0,w_\ast)$, 
and therefore
\[
\lim_{n\to\infty}\bar q_{a_n}^1(w_n)
=
\bar q_{a_0}^1(w_\ast).
\]
On the other hand, $\bar{q}^0$ is an $a$-independent 
lower bounded closed quadratic form on $L^2(-1,1)$. 
Hence, by \cite[Proposition 10.1]{Sch12}, it is lower semicontinuous with respect to $L^2$-convergence:
\[
\liminf_{n \to \infty} \bar{q}^0(w_n) \ge \bar{q}^0(w_\ast).
\]
Combining these estimates, we obtain
\[
\liminf_{n \to \infty} \lambda_{a_n}
= \liminf_{n \to \infty} \left( \bar{q}^0(w_n) + \bar{q}_{a_n}^1(w_n) \right)
\ge \bar{q}^0(w_\ast) + \bar{q}_{a_0}^1(w_\ast)
= \bar{q}_{a_0}(w_\ast).
\]
Since $w_n \to w_\ast$ in $L^2(-1,1)$ and
$\|w_n\|_{L^2}=1$ for all $n$,
we have $\|w_\ast\|_{L^2}=1$. 
Therefore,
$
\bar R(a_0,w_\ast)
=
\bar q_{a_0}(w_\ast),
$
and by the definition of $\lambda_{a_0}$ as the infimum of
$\bar R(a_0,\cdot)$,
$
\bar q_{a_0}(w_\ast)
=
\bar R(a_0,w_\ast)
\ge \lambda_{a_0}.
$
Consequently,
\[
\liminf_{n \to \infty} \lambda_{a_n}
\ge
\bar q_{a_0}(w_\ast)
\ge
\lambda_{a_0},
\]
which proves \eqref{EQ_408}.
\hfill $\Box$

\subsection{A remark on the continuity of $\lambda_a$ via parity decomposition} 
\label{section_0602}

Since the kernel $g(x-y)$ is even, 
$G_a$ commutes with the parity operator $(Ju)(x)=u(-x)$, i.e. $G_a J = J G_a$. 
It follows that both
$B_a=D^\ast G_aD$
and its Friedrichs extension $A_a$
commute with $J$.
In particular, each eigenspace $E(\lambda)$ of $A_a$ admits a decomposition into even and odd subspaces:
$E(\lambda)=E^+(\lambda)\oplus E^-(\lambda)$.
For the lowest eigenvalue space
$E(\lambda_a)=E^+(\lambda_a)\oplus E^-(\lambda_a)$, 
the assumption in \cite[Theorem 5.10]{CCM25}
amounts to requiring that
${\rm dim}\,E^+(\lambda_a)=1$ and $E^-(\lambda_a)=\{0\}$. 

In view of the parity decomposition and Corollary~\ref{cor_2}, 
we define
\[
\lambda_a^\bullet :=
\inf_{0 \ne v \in H_\bullet^1(-a,a)}
\frac{\langle B_a v, v \rangle_{L^2}}{\|v\|_{L^2}^2},
\qquad
\bullet \in \{+,-\}.
\]
Then the global infimum $\lambda_a$ in \eqref{EQ_107} can be expressed as
\begin{equation} \label{EQ_410}
\lambda_a = \min(\lambda_a^+, \lambda_a^-).
\end{equation}
Thus the continuity of $\lambda_a$
follows from that of $\lambda_a^\pm$,
since the minimum of two continuous functions is continuous.
The continuity of $\lambda_a^\pm$
is asserted in \cite[Theorem 5]{Bo01},
although the details of the proof are not fully provided there.

We now verify that \eqref{EQ_410} holds. 
Every $v \in H_0^1(-a,a)$ admits a unique decomposition into even and odd components: 
$v = v^+ + v^-$. Since differentiation reverses parity and $G_a$ preserves it,
$Dv^+$ and $G_a(Dv^+)$ are odd,
whereas $Dv^-$ and $G_a(Dv^-)$ are even.
Expanding the quadratic form in the numerator, 
\[
\langle G_aDv, Dv \rangle
=
\langle G_a(Dv^+), Dv^+ \rangle
+
\langle G_a(Dv^-), Dv^- \rangle, 
\]
since the cross terms vanish due to the orthogonality. 
Similarly,
$
\|v\|_{L^2}^2 = \|v^+\|_{L^2}^2 + \|v^-\|_{L^2}^2 
$ for the denominator. 
Hence the Rayleigh quotient on $H_0^1(-a,a)$ can be written as
\[
\frac{\langle G_aDv, Dv \rangle}{\|v\|_{L^2}^2}
=
\frac{
\langle G_a(Dv^+), Dv^+ \rangle
+
\langle G_a(Dv^-), Dv^- \rangle
}{
\|v^+\|^2 + \|v^-\|^2
}.
\]
For each component we have
$
\langle G_a(Dv^\bullet), Dv^\bullet \rangle \geq \lambda_a^\bullet \|v^\bullet\|^2
$ 
for 
$\bullet \in \{+,-\}$ by definition.
Therefore,
\[
\frac{\langle G_aDv, Dv \rangle}{\|v\|_{L^2}^2}
\geq
\frac{\lambda_a^+ \|v^+\|^2 + \lambda_a^- \|v^-\|^2}{\|v^+\|^2 + \|v^-\|^2}
\geq \min(\lambda_a^+, \lambda_a^-),
\]
which yields $\lambda_a \geq \min(\lambda_a^+, \lambda_a^-)$.
Conversely,
restricting the Rayleigh quotient to
$H_+^1(-a,a)$
and
$H_-^1(-a,a)$
immediately yields
$\lambda_a \leq \min(\lambda_a^+, \lambda_a^-)$. 
Thus, \eqref{EQ_410} is established.

%
\section{Proof of Theorem~\ref{thm_4}} \label{section_5}
%

\subsection{Positivity of the lower bound}

By \eqref{EQ_409}, the behavior of $\lambda_a$ for small $a>0$
is determined by the asymptotics of the Rayleigh quotient
$R(a,w)$ in \eqref{EQ_403} as $a \to 0+$. 
From \eqref{EQ_405}, we have
\begin{equation} \label{EQ_501}
R(a,v)  = \log \frac{1}{a} - (2A + 1) + \frac{\mathcal{L}(v)}{\|v\|^2} + O(a)
\end{equation}
for $0<a<(1/2)\log 2$ and $v \in H_0^1(-1,1)$, so it suffices to study $\mathcal{L}(v)/\|v\|^2$.
Note that this quantity is independent of $a$. 
In general, the infimum of $\mathcal{L}(v)/\|v\|^2$ need not coincide with that of $R(a,v)$, 
and hence their minimizers need not agree, but this distinction will not be relevant in what follows.
By \eqref{EQ_406}, the domain $\mathfrak{D}(\bar{\mathcal{L}})$ of the closure $\bar{\mathcal{L}}$
is contained in $H^{\log}(-1,1)$. 
Furthermore, 
the self-adjoint operator $T$ associated with 
$\bar{\mathcal{L}}$ has discrete spectrum. 
This can be proved by the same argument as in the proof of
\cite[Theorem 3.6]{CCM25}, using Proposition~\ref{prop_260309}.

The term $\log(1/a)$ on the right-hand side of \eqref{EQ_501} diverges to $+\infty$ as $a \to 0+$. 
Since the closure $\bar{\mathcal{L}}$ is a lower bounded closed quadratic form,
it is lower semicontinuous \cite[Proposition 10.1]{Sch12}.
We claim that
$
\inf_{\|v\|=1}\bar{\mathcal{L}}(v)>0.
$
Indeed, it follows directly from the definition that $\mathcal{L}(v)\ge0$. 
If there existed a sequence $(v_n)$ such that $\mathcal{L}(v_n) \to 0$, then
the first term in \eqref{EQ_404} would also tend to $0$, 
hence a subsequence of $(v_n)$ converges in $L^2(-1,1)$ to a constant function 
by \eqref{EQ_406} and Proposition~\ref{prop_260309}. 
However, the second term in \eqref{EQ_404} is strictly positive for constant functions.
Therefore, for sufficiently small $a>0$, we obtain $\lambda_a = \inf_w R(a,w) > 0$.

\subsection{Positivity improving property}

The positivity improving property of the semigroup associated with
$\bar{\mathcal{L}}$ established in this subsection
will play a key role in proving the simplicity of the lowest eigenvalue. 
From \eqref{EQ_404}, it is easy to see that 
$\mathcal{L}(|v|) \le \mathcal{L}(v)$ holds for all
$v \in \mathfrak{D}(\mathcal{L})$.
Hence a minimizer $v_0$ of the Rayleigh quotient
$\bar{\mathcal{L}}(v)/\|v\|^2$
may be chosen to be nonnegative.

To show that $v_0$ is positive, rather than merely nonnegative,
we apply the theory of Dirichlet forms~\cite{FOT11}.
A symmetric closed quadratic form $\mathcal E$ with domain $\mathcal F$
is called a Dirichlet form if it satisfies the Markov property.
That is, for every $v\in\mathcal F$,
the function
$
v^\sharp:=\phi\circ v=(0\vee v)\wedge1
$, 
$
\phi(t):=\min(1,\max(0,t)),
$
belongs to $\mathcal F$ and satisfies
$
\mathcal E(v^\sharp)\le\mathcal E(v).
$
Note that $\phi$ is Lipschitz continuous.

We claim that $v^\sharp\in \mathfrak{D}(\bar{\mathcal{L}})$ and
$\bar{\mathcal{L}}(v^\sharp)\le \bar{\mathcal{L}}(v)$ for every
$v\in \mathfrak{D}(\bar{\mathcal{L}})$.
As the domain $\mathfrak{D}(\mathcal{L})$ is dense in $\mathfrak{D}(\bar{\mathcal{L}})$,
it suffices to verify these properties on $\mathfrak{D}(\mathcal{L})$ 
using the Beurling--Deny representation
\eqref{EQ_404}
of $\bar{\mathcal{L}}$ \cite[Theorem 3.2.1]{FOT11}.
For the jumping part, the Lipschitz continuity of $\phi$ yields
\[
|v^\sharp(x)-v^\sharp(y)|^2
=
|\phi(v(x))-\phi(v(y))|^2
\le
|v(x)-v(y)|^2.
\]
Multiplying by the kernel $|x-y|^{-1}$ and integrating, we obtain
\[
\frac12
\int_{-1}^1\int_{-1}^1
\frac{|v^\sharp(x)-v^\sharp(y)|^2}{|x-y|}
\,dx\,dy
\le
\frac12
\int_{-1}^1\int_{-1}^1
\frac{|v(x)-v(y)|^2}{|x-y|}
\,dx\,dy.
\]
For the killing part, $|v^\sharp(x)|^2\le |v(x)|^2$ by $|\phi(a)|\le |a|$, and
$\kappa(x):=-\log(1-x^2)\ge0$. Hence
\[
\int_{-1}^1 \kappa(x)|v^\sharp(x)|^2\,dx
\le
\int_{-1}^1 \kappa(x)|v(x)|^2\,dx.
\]
Combining these estimates, we obtain
$
v^\sharp\in\mathfrak{D}(\bar{\mathcal{L}})
$ and $
\bar{\mathcal{L}}(v^\sharp)\le\bar{\mathcal{L}}(v)$.
Therefore $(\bar{\mathcal{L}},\mathfrak{D}(\bar{\mathcal{L}}))$ satisfies the Markov property and hence defines a Dirichlet form.

We next show that $(\bar{\mathcal{L}},\mathfrak{D}(\bar{\mathcal{L}}))$ is irreducible. 
Recall that a Dirichlet form is called irreducible if every invariant set is trivial, 
that is, either it or its complement has measure zero.
Let $A\subset(-1,1)$ be an $\bar{\mathcal{L}}$-invariant set.
By the characterization of invariant sets in \cite[p.~173]{LSV09},
$\bar{\mathcal{L}}(1_Au,1_{A^c}u)=0$ for all $u\in \mathfrak{D}(\bar{\mathcal{L}})$,
where $A^c=(-1,1)\setminus A$. 
Since $\mathfrak{D}(\mathcal{L})$ is a core of $\mathfrak{D}(\bar{\mathcal{L}})$, 
it suffices to test the condition on $\mathfrak{D}(\mathcal{L})$. 
We have 
\[
\mathcal{L}(1_Au,1_{A^c}u)
= -
\frac{1}{2}
\int_A\int_{A^c}
\frac{u(x)u(y)}{|x-y|}
\,dx\,dy
=0
\]
for all $u\in \mathfrak{D}(\mathcal{L})$, since 
\[
\begin{aligned}
\mathcal{L}(u,v)
&=
\frac{1}{4}
\int_{-1}^1
\int_{-1}^1
\frac{(u(x)-u(y))\overline{(v(x)-v(y))}}{|x-y|}
\,dx\,dy -
\frac{1}{2}
\int_{-1}^1
u(x)\overline{v(x)}\log(1-x^2)\,dx .
\end{aligned}
\]
Because the jumping kernel $|x-y|^{-1}$ is  positive  on $(-1,1)^2$, 
the above identity cannot hold when both $A$ and $A^c$ have positive measure. 
Thus either $A$ or $A^c$ has measure zero, and $\bar{\mathcal{L}}$ is irreducible.

By \cite[Theorem 1.4]{LSV09}, the irreducibility implies that 
the associated semigroup $\{e^{-tT}\mid t>0\}$ is positivity improving. 
In other words, for every nonnegative function $f\in L^2(-1,1)$ 
with $f\not\equiv0$, one has $(e^{-tT}f)(x)>0$ for a.e. $x\in(-1,1)$. 
In particular, the nonnegative eigenfunction $v_0$ of $T$ is positive almost everywhere. 
\medskip

\subsection{Simplicity of the lowest eigenvalue and parity of eigenfunctions}~
Suppose that the lowest eigenvalue of $T$ admits two orthogonal eigenfunctions.
Then, by the result of the previous subsection,
eigenfunctions corresponding to the lowest eigenvalue
may be chosen to be positive almost everywhere. 
However, two functions that are positive almost everywhere
cannot be orthogonal to each other, 
which leads to a contradiction. 
Hence the eigenspace corresponding to the lowest
eigenvalue of $T$ is one-dimensional, and the lowest eigenvalue is simple.

Next, decompose the quadratic form $\bar{q}_a(v)$ according to \eqref{EQ_501}.
The term $(\log(1/a) -(2A+1))\|v\|_{L^2}^2$ merely shifts the spectrum,
so the asymptotic behavior of $\lambda_a$ is governed by the operator associated with
$\bar{\mathcal{L}}(v)+O(a)\|v\|_{L^2}^2$.
By \cite[Chapter VIII, Section 3, Theorems 3.6 and 3.15]{Ka95},
as $a \to 0+$, this operator has the same multiplicity for its lowest eigenvalue as $T$. 
Therefore, the lowest eigenvalue of the perturbed operator,
and hence $\lambda_a$, is simple for sufficiently small $a>0$. 
\medskip

As discussed in Section~\ref{section_0602}, the eigenspaces of the self-adjoint operator
associated with $\bar{q}_a$ decompose into even and odd parts. 
In particular, since the lowest eigenvalue is simple, 
its eigenfunction must be either even or odd.
However, since the eigenfunction corresponding to the lowest eigenvalue has a fixed sign,
it must be even.

%
\section{Proof of Theorem~\ref{thm_5}} \label{section_6}
%

\subsection{Symmetry of the minimal operator} 

Choose $\lambda<\lambda_a$, and let $\mathcal{H}(T_a)$ denote the Hilbert space obtained as the completion of $C_c^\infty(-a,a)$ with respect to the norm $\|\cdot\|_{T_a}$ defined by \eqref{EQ_109}. Define
\[
\langle v_1,v_2\rangle_{T_a}
:=\langle T_a v_1,v_2\rangle_{L^2}
\]
so that $\|v\|_{T_a}^2=\langle v,v\rangle_{T_a}$. 
Let $\D_a$ be the operator on $\mathcal{H}(T_a)$ 
with domain \eqref{EQ_110} acting as $\D_a=i\,d/dx$. 
Since the canonical map 
$C_c^\infty(-a,a)\to \mathcal{H}(T_a)$
is injective and the image is dense, the operator $\D_a$ is densely defined.

\begin{lemma} \label{lem_7_1}
The operator $\D_a$ is symmetric on $\mathcal{H}(T_a)$.
\end{lemma}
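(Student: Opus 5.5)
We need to show $\langle \D_a v_1, v_2\rangle_{T_a} = \langle v_1, \D_a v_2\rangle_{T_a}$ for $v_1,v_2\in C_c^\infty(-a,a)$. The plan is to reduce this to the statement that $A_a$ commutes with differentiation on compactly supported functions, which holds because $A_a$ acts on such functions as a convolution.

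For $v\in C_c^\infty(-a,a)$ we have $\langle v_1,v_2\rangle_{T_a}=Q_W^{\,a}(v_1,v_2)-\lambda\langle v_1,v_2\rangle_{L^2}$. The $L^2$ part is handled by the symmetry of $i\,d/dx$ on $C_c^\infty(-a,a)$ in $L^2$, which follows from integration by parts with no boundary terms. It therefore suffices to prove
\[
Q_W^{\,a}(iv_1',v_2)=Q_W^{\,a}(v_1,iv_2'),
\]
that is, $Q_W(v_1',v_2)=-Q_W(v_1,v_2')$.

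I will use \eqref{EQ_209} in its polarized form:
\[
Q_W^{\,a}(v_1,v_2)=\iint (-g'')(x-y)\,v_1(y)\overline{v_2(x)}\,dx\,dy
=\langle (-g'')\ast v_1, v_2\rangle,
\]
where $-g''$ is a tempered distribution and the pairing is the distributional one against the test functions. Since convolution with a distribution commutes with differentiation, we get $(-g'')\ast v_1'=((-g'')\ast v_1)'$. Pairing this with $\overline{v_2}$ and moving the derivative onto $\overline{v_2}$ (legitimate because $v_2$ has compact support inside $(-a,a)$) gives
\[
\langle (-g'')\ast v_1', v_2\rangle=-\langle (-g'')\ast v_1, v_2'\rangle .
\]
Multiplying by $i$ and using $\overline{i}=-i$ in the second slot yields the identity above.

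Alternatively, one can argue directly from \eqref{EQ_301}. Since $\widehat{v'}(z)=-iz\widehat v(z)$, we have $\widehat{iv'}(z)=z\widehat v(z)$. Then
\[
Q_W(iv_1',v_2)=\sum_\gamma m_\gamma\,\gamma\,\widehat{v_1}(\gamma)\overline{\widehat{v_2}(\bar\gamma)}
\quad\text{and}\quad
Q_W(v_1,iv_2')=\sum_\gamma m_\gamma\,\widehat{v_1}(\gamma)\overline{\bar\gamma\,\widehat{v_2}(\bar\gamma)}.
\]
These agree because $\overline{\bar\gamma}=\gamma$. Absolute convergence of these sums is not an issue, since $\widehat{v_i}$ decay rapidly and $\sum_\gamma m_\gamma|\gamma|^{-2}<\infty$. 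This route is cleaner and does not require RH, since $\gamma$ may be complex.

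The main obstacle is justifying that the pairing is genuinely given by these formulas for the shifted functions $iv_j'$. This is fine because $iv_j'\in C_c^\infty(-a,a)$ is again a test function, so both \eqref{EQ_301} and \eqref{EQ_209} apply verbatim. Density of $C_c^\infty(-a,a)$ in $\mathcal H(T_a)$, which makes $\D_a$ densely defined, was already noted above.
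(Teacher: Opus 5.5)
Your first route is essentially the paper's proof. The paper moves $A_a$ across by self-adjointness, integrates by parts, and then uses $(A_av)'=A_a(v')$ from the convolution representation \eqref{EQ_210}; your version does the same computation in a different order and avoids invoking self-adjointness. Your alternative via \eqref{EQ_301} is also correct and unconditional, since $\widehat{iv'}(z)=z\widehat v(z)$ and the sum converges absolutely for compactly supported smooth test functions.
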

\begin{proof} 
We show that
$\langle \D_a u, v \rangle_{T_a}
=
\langle u, \D_a v \rangle_{T_a}$
for all $u,v \in \mathfrak{D}(\D_a)$.
By the definition of the inner product on $\mathcal{H}(T_a)$ and the self-adjointness of $A_a$,
we have
\[
\langle \D_a u, v \rangle_{T_a}
=
\langle A_a(iu'), v \rangle_{L^2}
-\lambda \langle iu', v \rangle_{L^2}
=
\langle iu', A_a v \rangle_{L^2}
-\lambda \langle iu', v \rangle_{L^2}.
\]
According to the distribution kernel representation \eqref{EQ_210},
the function $A_a v$ is the convolution of $v$ with the tempered distribution
$k:=-g''$.
Hence, by the general theory of tempered distributions,
$A_a v \in C^\infty(\mathbb{R})$ whenever $v \in \mathfrak{D}(\D_a)$.
Using this fact together with the boundary conditions
$u(a)=u(-a)=0$,
integration by parts gives
$
\langle iu', A_a v \rangle_{L^2}
=
\langle u, i(A_a v)' \rangle_{L^2},
$
and
$
\langle iu', v \rangle_{L^2}
=
\langle u, iv' \rangle_{L^2}. 
$
Furthermore, using \eqref{EQ_210}
and the boundary conditions $v(a)=v(-a)=0$,
we obtain by integration by parts that
\[
(A_a v)'(x)
=
\frac{d}{dx}\int_{-a}^{a} k(x-y)v(y)\,dy
=
-\int_{-a}^{a}\frac{d}{dy}k(x-y)v(y)\,dy
=
\int_{-a}^{a}k(x-y)v'(y)\,dy.
\]
Therefore,
$\langle u, i(A_a v)' \rangle_{L^2}
=
\langle u, A_a(iv') \rangle_{L^2}$,
and hence
\[
\langle \D_a u, v \rangle_{T_a}
=
\langle u, A_a(iv') \rangle_{L^2}
-\lambda \langle u, iv' \rangle_{L^2}
=
\langle A_a u, iv' \rangle_{L^2}
-\lambda \langle u, iv' \rangle_{L^2}
=
\langle u, \D_a v \rangle_{T_a}.
\]
This proves the desired identity.
\end{proof}

\subsection{Adjoint of the minimal operator} 

We now derive the adjoint operator $\D_a^\ast$ of $\D_a$ on $\mathcal{H}(T_a)$.
For a vector $v \in \mathcal{H}(T_a)$ to belong to the domain
$\mathfrak{D}(\D_a^\ast)$ of $\D_a^\ast$,
the linear functional
$u \mapsto \langle \D_a u,v\rangle_{T_a}$
must be continuous on $\mathfrak{D}(\D_a)$ with respect to the norm of
$\mathcal{H}(T_a)$. 
Since $\D_a$ is symmetric,
$\mathfrak{D}(\D_a)\subset \mathfrak{D}(\D_a^\ast)$.
Moreover, by the calculation in the previous section, 
we have
$
\langle \D_a u,v\rangle_{T_a}
=
\langle u,i(A_a v)'\rangle_{L^2}
-\lambda \langle u,iv'\rangle_{L^2}$ for $u\in\mathfrak{D}(\D_a)$. 
If this functional is continuous with respect to $\|\cdot\|_{T_a}$,
then it is in particular continuous with respect to the $L^2$-norm.
Hence it is necessary that
$v'\in L^2(-a,a)$ and $(A_a v)'\in L^2(-a,a)$.
Therefore,
\[
\mathfrak{D}(\D_a^\ast)
~\subset~
\{\,v\in\mathcal{H}(T_a)\mid
v\in H^1(-a,a),~
A_a v\in H^1(-a,a)\,\}.
\]
The condition that $v\in\mathfrak{D}(\D_a^\ast)$ and
$\D_a^\ast v=g\in\mathcal{H}(T_a)$ means that 
$
\langle \D_a u,v\rangle_{T_a}
=
\langle u,g\rangle_{T_a}
$
holds for every $u\in\mathfrak{D}(\D_a)$. 
By the calculation in the previous section,
we have
\begin{equation*} 
\langle \D_a u,v\rangle_{T_a}
=
\langle u,i(T_a v)'\rangle_{L^2}
\end{equation*}
for $u\in\mathfrak{D}(\D_a)$. Since $\mathfrak{D}(\D_a)$ is dense in $\mathcal{H}(T_a)$,
it follows that
\[
T_a(\D_a^\ast v)=i(T_a v)'.
\]
If $v\in\mathfrak{D}(\D_a)$, then this identity becomes
\[
T_a(\D_a^\ast v)
=
i(T_a v)'
=
T_a(iv')
=
T_a(\D_a v),
\]
and hence $\D_a^\ast$ agrees with $\D_a$. 
The eigenvalue equations
$\D_a^\ast v=\pm i\,v$
are equivalent to
$i(T_a v)'=\pm i(T_a v)$.
Since $\lambda<\lambda_a$, 
the operator $T_a$ is invertible on $L^2(-a,a)$. 
Hence the equations
\[
(T_a v)(x)=C_+e^x,
\qquad
(T_a v)(x)=C_-e^{-x},
\]
admit unique solutions, 
and these functions span the eigenspaces corresponding to the eigenvalues
$\pm i$.
Therefore we obtain the following result.

\begin{lemma}  \label{lem_6_2}
$\D_a$ has the deficiency indices $(1,1)$.
\end{lemma}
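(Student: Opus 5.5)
The deficiency subspaces are $\ker(\D_a^\ast \mp i)$, and I will show each is exactly one-dimensional. The computation preceding the statement already shows that $v\in\mathfrak{D}(\D_a^\ast)$ with $\D_a^\ast v=\pm i v$ forces $(T_a v)'=\pm T_a v$ in the distributional sense on $(-a,a)$. Hence $T_a v = C_\pm e^{\pm x}$. Since $T_a=A_a-\lambda I$ with $\lambda<\lambda_a$ is injective, each eigenspace has dimension at most one. It therefore remains to prove that, for $C_\pm\neq 0$, the solution $v_\pm:=T_a^{-1}(C_\pm e^{\pm x})$ is a genuine nonzero element of $\mathfrak{D}(\D_a^\ast)$ with $\D_a^\ast v_\pm=\pm i v_\pm$.

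\textbf{Existence and membership in $\mathcal{H}(T_a)$.} Since $T_a\ge \lambda_a-\lambda>0$ is self-adjoint, $T_a^{-1}$ is bounded on $L^2(-a,a)$. Because $e^{\pm x}\in L^2(-a,a)$, we get $v_\pm\in\mathfrak{D}(A_a)\subset\mathfrak{D}(Q_W^{\,a})$, and $v_\pm\neq 0$. By Theorem~\ref{thm_1} and its proof, $C_c^\infty(-a,a)$ is dense in $\mathfrak{D}(Q_W^{\,a})$ in the form norm. The form norm is equivalent to $\|\cdot\|_{T_a}$, since $\|v\|_{T_a}^2=Q_W^{\,a}(v)-\lambda\|v\|^2$ and $\lambda<\lambda_a$. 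Therefore $\mathcal{H}(T_a)$ is identified with $\mathfrak{D}(Q_W^{\,a})$, and $v_\pm\in\mathcal{H}(T_a)$. On this space the inner product is $\langle u,v\rangle_{T_a}=Q_W^{\,a}(u,v)-\lambda\langle u,v\rangle_{L^2}$. For $v\in\mathfrak{D}(A_a)$ this equals $\langle u,T_a v\rangle_{L^2}$.

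\textbf{Verifying the adjoint relation.} Take $u\in C_c^\infty(-a,a)$. Then
\[
\langle \D_a u, v_\pm\rangle_{T_a}=\langle iu', T_a v_\pm\rangle_{L^2}=\overline{C_\pm}\int_{-a}^a iu'(x)e^{\pm x}\,dx .
\]
Integrating by parts, using that $u$ has compact support, gives
\[
\overline{C_\pm}\int_{-a}^a u(x)\,\overline{\,i(\pm e^{\pm x})\,}\,dx=\langle u, T_a(\pm i v_\pm)\rangle_{L^2}=\langle u,\pm i v_\pm\rangle_{T_a}.
\]
This functional is continuous in $u$ for $\|\cdot\|_{T_a}$, so $v_\pm\in\mathfrak{D}(\D_a^\ast)$ and $\D_a^\ast v_\pm=\pm i v_\pm$.

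\textbf{Uniqueness and the main obstacle.} Uniqueness needs care, because elements of $\mathcal{H}(T_a)$ are a priori only form-domain elements, so "$T_a v$" must be interpreted correctly. The cleanest route is as follows. For $v\in\mathcal{H}(T_a)=\mathfrak{D}(Q_W^{\,a})$, define the distribution $\Lambda_v:=(A_a-\lambda)v$ through the Friedrichs/form duality, i.e.\ $u\mapsto \overline{\langle u,v\rangle_{T_a}}$ on $C_c^\infty(-a,a)$. The eigen-equation says $i\Lambda_v'=\pm i\Lambda_v$ as distributions on $(-a,a)$. A distribution on an interval satisfying $\Lambda'=\pm\Lambda$ is a multiple of $e^{\pm x}$. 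Hence $\Lambda_v=C e^{\pm x}$, which lies in $L^2$. It follows that $u\mapsto\langle u,v\rangle_{T_a}$ is $L^2$-continuous, so $v\in\mathfrak{D}(A_a)$ by the characterization of $\mathfrak{D}(A_a)$ recalled before Lemma~\ref{lem_3_1}. Then $v=T_a^{-1}(Ce^{\pm x})$ is a multiple of $v_\pm$. This gives $\dim\ker(\D_a^\ast\mp i)=1$, i.e.\ deficiency indices $(1,1)$.
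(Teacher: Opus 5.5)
Your proposal is correct and follows the same route as the paper: an eigenvector of $\D_a^\ast$ at $\pm i$ is characterized by $T_a v=C_\pm e^{\pm x}$, and invertibility of $T_a$ makes each deficiency space exactly one-dimensional. You also supply details the paper treats loosely: the explicit check that $T_a^{-1}(e^{\pm x})$ lies in $\mathfrak{D}(\D_a^\ast)$, and the distributional reading of $T_a v$ for a general $v\in\mathcal{H}(T_a)$ (best written as the linear functional $\phi\mapsto\overline{\langle\bar\phi,v\rangle_{T_a}}$); in particular, the paper passes from $\|\cdot\|_{T_a}$-continuity to $L^2$-continuity, which runs the wrong way, whereas your ODE argument yields $T_a v\in L^2$ legitimately.
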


\subsection{Eigenfunctions of $\D_a^\ast$} 

Let $f_z$ be an eigenfunction of $\D_a^\ast$ corresponding to an eigenvalue $z \in \C$.
The equation $\D_a^\ast f_z = z f_z$ implies that $(T_a f_z)(x) = C \exp(-izx)$, 
which uniquely determines $f_z$ for each fixed $C\ne 0$, since $T_a$ is invertible. 
Hence the eigenspace corresponding to the eigenvalue $z$ is at most
one-dimensional.
We now give a concrete characterization of this eigenspace.

Let $z \in \C$. Since the Fourier transform is continuous on $L^2(-a,a)$ 
and $\| v \|_{T_a} \gg \|v \|_{L^2}$, the map $v \mapsto \widehat{v}(z)$ 
extends continuously to $\mathcal{H}(T_a)$. 
Hence, by the Riesz representation theorem, 
there exists a unique element $v_z \in \mathcal{H}(T_a)$ such that 
$\langle v, v_z \rangle_{T_a}=\widehat{v}(\bar{z})$ 
for all $v \in \mathcal{H}(T_a)$. 
For every $v \in \mathfrak{D}(\D_a)$, 
\[
\langle v, \D_a^\ast v_z \rangle_{T_a}
= \langle \D_av, v_z \rangle_{T_a}
=\langle iv', v_z \rangle_{T_a}
=\bar{z}\widehat{v}(\bar{z})
= \bar{z} \langle v, v_z \rangle_{T_a}
= \langle v, z v_z \rangle_{T_a}.
\]
Since $\mathfrak D(\D_a)$ is dense in $\mathcal H(T_a)$,
it follows that $v_z \in \mathfrak{D}(\D_a^\ast)$ and $\D_a^\ast v_z=zv_z$.
As $v_z\neq0$, we conclude that the eigenspace of $\D_a^\ast$
corresponding to the eigenvalue $z$ is one-dimensional and is spanned by $v_z$. 

Let $e_z(x):=\exp(-izx)$.
Since $e_z\in L^2(-a,a)$ and $T_a$ is invertible, the equation $T_a w_z = e_z$ 
has a unique solution $w_z\in\mathfrak D(T_a)$.
Moreover, for every $v\in\mathfrak D(\D_a)$, 
$\widehat{v}(\bar{z})=\langle v , e_z \rangle_{L^2}=\langle v, T_a w_z \rangle_{L^2}
=\langle v,  w_z \rangle_{T_a}$ . 
By the uniqueness in the Riesz representation theorem, we conclude that
$w_z=v_z$. Hence $v_z  \in \mathfrak D(T_a)$. 

\subsection{Self-adjoint extensions of the minimal operator} 

We compute the self-adjoint extensions of $\D_a$ 
using von Neumann's theory of deficiency indices (\cite[Section X.1]{RS75}, \cite[Section 3.2]{Sch12}). 
By the results of the previous subsection, the deficiency spaces
${\rm Ker}\,(\D_a^\ast \mp i)$
are one-dimensional and are spanned by vectors $v_\pm=C_\pm v_{\pm i}$ 
with the normalization $\|v_+\|_{T_a}=\|v_-\|_{T_a}$. 
Thus, every self-adjoint extension
$\overline{\mathscr{D}}_{a,\theta}$ parametrized by $\theta \in [0,2\pi)$ 
has domain
\[
\aligned 
\mathfrak{D}(\overline{\mathscr{D}}_{a,\theta}) 
&=
\{ v_0 + c(v_+ + e^{i\theta} v_-) \mid v_0 \in \mathfrak{D}(\overline{\D}_a), c \in \C \} \\
\subset & \,\,
\mathfrak{D}(\D_a^\ast)
=
\{ v_0 + a v_+ + b v_- \mid v_0 \in \mathfrak{D}(\overline{\D}_a), a,b \in \C \}.
\endaligned 
\]
These domains are characterized by the boundary form
\begin{equation} \label{EQ_601}
W(u,v) = \langle \D_a^\ast u, v \rangle_{T_a} - \langle u, \D_a^\ast v \rangle_{T_a}, 
\end{equation}
namely, $v \in \mathfrak{D}(\mathscr{D}_{a}^\ast)$ 
belongs to $\mathfrak{D}(\overline{\mathscr{D}}_{a,\theta})$ if and only if 
$W(v, w_\theta) = 0$ for $w_\theta = v_+ + e^{i\theta} v_-$.
The action of $\overline{\mathscr{D}}_{a,\theta}$ is given by
\[
\overline{\mathscr{D}}_{a,\theta}\Bigl(v_0 + c(v_+ + e^{i\theta} v_-)\Bigr)
:= \overline{\D}_a v_0 + ic (v_+ - e^{i\theta} v_-),
\]
where $\overline{\D}_a$ is the closure of $\D_a$. 

\subsection{Proof of Theorem~\ref{thm_5}}

We choose the basis vectors of the deficiency spaces  so that 
$\D_a^\ast v_\pm = \pm i v_\pm$ and $\|v_+\|_{T_a}=\|v_-\|_{T_a}$. 
On the other hand, applying $\D_a^\ast$ to
$w_\theta = v_+ + e^{i\theta} v_-$ yields
\[
\D_a^\ast w_\theta = i v_+ - i e^{i\theta} v_- .
\]
Substituting these equalities into the boundary form $W$, we obtain
\[
W(v_z,w_\theta)
= \langle z v_z, v_+ + e^{i\theta} v_- \rangle_{T_a}
- \langle v_z, i v_+ - i e^{i\theta} v_- \rangle_{T_a} .
\]
Expanding the right-hand side by linearity of the inner product and collecting the terms involving the inner products with $v_+$ and $v_-$, we obtain
\begin{equation} \label{EQ_602}
W(v_z, w_\theta)
= (z + i) \langle v_z, v_+ \rangle_{T_a}
+ (z - i) e^{-i\theta} \langle v_z, v_- \rangle_{T_a} = 0 .
\end{equation}
Using the definition of the inner product on $\mathcal{H}(T_a)$ 
and the identity $\langle v, v_z \rangle_{T_a}=\widehat{v}(\bar{z})$, we obtain
\begin{equation} \label{EQ_603}
\langle v_z, v_\pm \rangle_{T_a}
= \overline{\langle v_\pm, v_z  \rangle_{T_a}}
=\overline{\widehat{v_\pm}(\bar{z})}
= \int_{-a}^a \overline{v_\pm(x)}e^{-izx}\,dx .
\end{equation}
Because $v_\pm \in \mathcal{H}(T_a) \subset L^2(-a,a) \subset L^1(-a,a)$,
the above Fourier integrals define entire functions of $z$.
Consequently, upon rewriting the orthogonality condition \eqref{EQ_602},
we find that the eigenvalue $z$ must satisfy the equation
$W(a,\theta;z)=0$, 
where $W(a,\theta;z)$ is defined in \eqref{EQ_111}. 
This proves the first assertion of the theorem.
\medskip

Next, we show that all zeros of $W(a,\theta;z)$ are real. 
Suppose that a complex number $\lambda_0 \in \mathbb{C}\setminus\R$ satisfies
$W(a,\theta;\lambda_0)=0$.
By the characterization established above, this means that the corresponding
$v_{\lambda_0}$ is orthogonal to the deficiency vector $w_\theta$
with respect to the boundary form, that is,
$W(v_{\lambda_0},w_\theta)=0$.
By definition, every element satisfying this condition belongs to
$\mathfrak{D}(\overline{\mathscr{D}}_{a,\theta})$.
Consequently, $v_{\lambda_0}$ is a nonzero element of the domain of the
self-adjoint extension $\overline{\mathscr{D}}_{a,\theta}$ and satisfies 
$\overline{\mathscr{D}}_{a,\theta}v_{\lambda_0} =\lambda_0 v_{\lambda_0}$. 
Hence $v_{\lambda_0}$ is a genuine eigenvector of the self-adjoint operator
$\overline{\mathscr{D}}_{a,\theta}$ with eigenvalue $\lambda_0$. 
However, $\lambda_0$ must be real, 
since every eigenvalue of a self-adjoint operator is real. 
This contradiction shows that any complex number $\lambda$
satisfying $W(a,\theta;\lambda)=0$ must lie on the real axis.
\hfill $\Box$

%
\section{Heuristic justification of the formula in Corollary~\ref{cor_6}} \label{section_7}
%

In this section, we assume RH. 
By Weil's positivity criterion, we have $Q_W \geq 0$ and hence $A_a>0$ for all $a>0$. 
Under this assumption, for each $a>0$ we may take $\lambda=0$ in the definition \eqref{EQ_109} of $T_a=T_{a,\lambda}$.
Hence we work with $A_a$ in place of $T_a$, and consider the Hilbert spaces
$\mathcal{H}(A_a):=\mathcal{H}(T_{a,0})$.

\subsection{The Hilbert Space $\mathcal{H}(A_\infty)$}

In order to study the behavior of the differential operator $\D_a$ on the Hilbert space $\mathcal{H}(A_a)$ as $a \to \infty$, we first introduce the Hilbert space $\mathcal{H}(A_\infty)$ and the corresponding differential operator $\D_\infty$. 
Here $A_\infty$ is only a formal symbol and does not denote an actual operator.

Let $\mathfrak{D}(Q_W)$ denote the completion of $C_c^\infty(\mathbb R)$ 
with respect to the form norm
\[
\|v\|_{Q_W}^2 := Q_W(v) + \|v\|_{L^2}^2.
\]
This space is a Hilbert space with respect to the form norm. 
$
\mathcal{N} := \{ v \in \mathfrak{D}(Q_W) \mid Q_W(v) = 0 \}
$
is a closed subspace of $\mathfrak{D}(Q_W)$ with respect to the form norm.
We then consider the quotient space
\[
\mathcal{H}(A_\infty) := \mathfrak{D}(Q_W) / \mathcal{N}.
\]
For $v \in \mathfrak{D}(Q_W)$, we denote its equivalence class by $[v] \in \mathcal{H}(A_\infty)$.
We define the norm of $[v]$ by
\[
\|[v]\|_{A_\infty}^2 := Q_W(v).
\]
The Hilbert space $\mathcal{H}(A_\infty)$ can be identified with the completion $\mathcal{H}_W$ of $C_c^\infty(\mathbb{R})$ with respect to $Q_W(v)$ studied in \cite{Su25}.

Since $Q_W(v) \neq 0$ for any nonzero $v \in C_c^\infty(\mathbb{R})$, the map $v \mapsto [v]$ embeds $C_c^\infty(\mathbb{R})$ injectively into $\mathcal{H}(A_\infty)$, and its image is dense.
Hence, by fixing representatives on $C_c^\infty(\mathbb R)$,
we define a densely defined operator $\D_\infty$ on
$\mathcal H(A_\infty)$ by
$
\D_\infty [v] := [\D_\infty v] = [iv'].
$
This is well-defined since $[u]=[v]$ for $u,v \in C_c^\infty(\mathbb{R})$ implies $u=v$.

We now verify that $\D_\infty$ is symmetric on $\mathcal H(A_\infty)$ and hence closable. 
If $Q_W(w)=0$, it follows that $Q_W(w,v)=0$ for all $v$ 
by the assumption $Q_W \geq 0$
and 
the Cauchy-Schwarz inequality $|Q_W(u,v)|^2 \le Q_W(u) Q_W(v)$. 
This implies $
Q_W(u+w,v)=Q_W(u,v)$ and $Q_W(u,v+w)=Q_W(u,v)$. 
Hence the sesquilinear form
\begin{equation} \label{EQ_701}
Q_W([u],[v])
= Q_W(u,v)
= \int_{-\infty}^{\infty} \int_{-\infty}^{\infty} (-g''(x-y))\, u(y)\, \overline{v(x)} \, dx dy
\end{equation}
is well-defined (cf. \eqref{EQ_209}).
Moreover, the translation operator $\tau_t$ ($t \in \mathbb{R}$) defined by
\[
\tau_t [u] := [\tau_t u], \quad (\tau_t u)(x) = u(x - t)
\]
is well-defined, since 
$
\tau_t [u + w] = [\tau_t (u + w)] = [\tau_t u] + [\tau_t w]
$
and $Q_W([\tau_t w]) = Q_W(\tau_t w) = Q_W(w) = 0$ if $Q_W(w) = 0$.
Furthermore, by \eqref{EQ_701}, the norm in $\mathcal{H}(A_\infty)$ 
is invariant under translations:
\begin{equation} \label{EQ_702}
\|[v]\|_{A_\infty}
=
\|\tau_t [v]\|_{A_\infty}.
\end{equation}
Since $\D_\infty$ commutes with convolution, it follows from \eqref{EQ_701} that
\[
Q_W(\D_\infty [u], [v]) = Q_W([u], \D_\infty [v])
\]
for $u,v \in C_c^\infty(\mathbb{R})$. 
Hence $\D_\infty$ is symmetric with respect to
$\langle\cdot,\cdot\rangle_{A_\infty}$.
Since every symmetric operator is closable,
its closure will be denoted by $\bar{\D}_\infty$.
Thus, for $[v]\in\mathfrak D(\bar{\D}_\infty)$,
one can choose a sequence $v_n\in C_c^\infty(\R)$ such that
$[v_n]\to [v]$ and
$\D_\infty[v_n]=[iv_n']\to \bar{\D}_\infty[v]$
in $\mathcal H(A_\infty)$.

\subsection{Isomorphism with a de Branges space}

We now recall the isometric isomorphism between $\mathcal{H}(A_\infty)$ 
(denoted by $\mathcal{H}_W$ in \cite{Su25}) 
and a certain de Branges space $\mathcal{B}$ established in \cite{Su25}.
Here it is sufficient to understand $\mathcal{B}$ 
as a reproducing kernel Hilbert space consisting of entire functions.

Let $M$ be the operator of multiplication by an independent variable on $\mathcal{B}$ 
defined by $\mathfrak{D}(M)=\{ F \in \mathcal{B} \mid zF(z) \in \mathcal{B}\}$ 
and $(MF)(z)=zF(z)$. 
The operator $M$ is a densely defined closed symmetric operator with deficiency indices $(1,1)$, 
and it admits a one-parameter family of self-adjoint extensions parametrized 
by $\psi \in [0,\pi)$. In particular, the self-adjoint extension $M_{\pi/2}$ has purely discrete spectrum 
$\Gamma$, the set of zeros of $\xi(1/2-iz)$ in Section~\ref{section_3}, 
and one can choose an orthonormal basis of $\mathcal{B}$ 
consisting of the corresponding eigenfunctions 
$F_\gamma$ for $\gamma \in \Gamma$ \cite[Section 6]{Su25}.
In this setting, defining $U: \mathcal{H}(A_\infty) \to \mathcal{B}$ by
\begin{equation} \label{EQ_703}
U([v])
:= \sum_{\gamma \in \Gamma} \sqrt{m_\gamma}\, \widehat{v}(\gamma)  F_\gamma 
\end{equation}
yields an isometric isomorphism of Hilbert spaces \cite[Theorems  1.1 and 1.4]{Su25}:
\begin{equation} \label{EQ_704}
(Q_W(v)=)~\| [v] \|_{A_\infty}^2 = \| U([v]) \|_{\mathcal{B}}^2.
\end{equation}
This $U$ is precisely the operator denoted 
by $\pi^{-1/2}\widehat{\mathcal{P}_D} = \pi^{-1/2}\widehat{\mathcal{P}} D$ in \cite{Su25}. 

\subsection{Correspondence between $\D_\infty$ and $M$}  

Via the isomorphism $U: \mathcal{H}(A_\infty) \to \mathcal{B}$,
the operator $M$ induces a self-adjoint operator on
$\mathcal{H}(A_\infty)$.
However, its relation to the differential operator
$\bar{\D}_\infty$ is not immediately clear. 
On the other hand, the relation between the differential operator $\bar{\D}_\infty$ and 
the self-adjoint extension $M_{\pi/2}$ of $M$ can be described in a simple way as follows.

If $v \in C_c^{\infty}(\R)$, then $\widehat{v}$ is rapidly decreasing. 
Hence $[v]$ belongs to the domain of $M_{\pi/2}$.
For each $\gamma \in \Gamma$, $F_\gamma$ is an eigenfunction of
$M_{\pi/2}$ with eigenvalue $\gamma$, and therefore
\[
\aligned 
M_{\pi/2} U([v])
& = \sum_{\gamma} \gamma \widehat{v}(\gamma) F_\gamma 
   = \sum_{\gamma} \widehat{(iv')}(\gamma) F_\gamma = U([iv'])= U(\D_\infty[v]). 
\endaligned 
\]
Let $\mathsf{D}_{\pi/2}:=U^{-1}M_{\pi/2} U$.  
Then the above computation shows that
$\mathsf{D}_{\pi/2}[v] = [iv'] = \D_\infty[v]$ 
for $v \in C_c^\infty(\R)$. 
Since $\mathsf D_{\pi/2}$ is closed and extends
$\D_\infty$, it follows that
\[
\mathsf D_{\pi/2}[v]
=
\bar{\D}_\infty[v]
\]
for all $[v]\in\mathfrak D(\bar{\D}_\infty)$. 
Thus, $\mathsf D_{\pi/2}$ is a self-adjoint extension of $\bar{\D}_\infty$.
\medskip

\noindent
{\bf Remark.}~
More generally, for any self-adjoint extension $M_\psi$ of $M$,
one may define $\mathsf D_\psi:=U^{-1}M_\psi U$. 
Since $M_\psi$ is unitarily equivalent to $\mathsf D_\psi$,
the operator $\mathsf D_\psi$ is self-adjoint.
It is natural to ask whether $\mathsf D_\psi$
can be characterized directly as a self-adjoint extension of
$\bar{\D}_\infty$.
We do not pursue this question here.

\subsection{An operator unitarily equivalent to $M$}

We now consider the operator on $\mathcal H(A_\infty)$ induced by $M$ through $U$. 
More precisely, we define
\[
\mathsf{D} := U^{-1} M U.
\]
By construction, $\mathsf D$ and $M$ are unitarily equivalent via $U$, 
and have the same deficiency indices. 
In particular, the deficiency indices of $\mathsf{D}$ are $(1,1)$.
\medskip

Writing the Fourier transform as $\mathsf{F}v=\widehat{v}$, 
there exists a subspace $V(0) \subset L^2(0,\infty)$ such that the maps 
$\mathcal{B} \ni F \mapsto v_F:=\mathscr{F}^{-1}F \in V(0)$ and 
$V(0) \ni v \mapsto [v] \in \mathcal{H}(A_\infty)$ are isomorphisms \cite[Theorem 5.5]{Su25}. 
In this setting, the operator on $V(0)$ unitarily equivalent to $M$ 
via $\mathscr{F}^{-1}:\mathcal{B} \to V(0)$ is denoted by $D=\mathscr{F}^{-1}M\mathscr{F}$. 
Then, via the isomorphism $\iota: V(0) \to \mathcal{H}(A_\infty)$, 
the operator corresponding to $D$ is given by $\mathsf{D}= \iota D\iota^{-1}$. 
In particular, we have $U = \mathscr{F}\iota^{-1}$. 
If $F \in \mathfrak{D}(M)$ and $F= \mathscr{F}v$, then
\[
Dv=\mathscr{F}^{-1}(MF)=\mathscr{F}^{-1}(zF)=iv'.
\]
Moreover, for $v \in V(0)$,
\[
\mathsf{D}[v] = \iota (Dv) = \iota (iv')=[iv'].
\]
We claim that for a given function $u$, 
the decomposition $u=v+w$ 
with $v \in V(0)$ and $[w]=0$ is uniquely determined. 
Once this is established, 
we may define $\mathsf{D}[u]:=\mathsf{D}[v]$ 
by writing $u=v+w$ with $v \in V(0)$ 
for $[u] \in \mathfrak{D}(\mathsf{D})$ 
(defined via the unitary correspondence with $\mathfrak{D}(M)$). 

\medskip

The uniqueness of $v$ in the above decomposition is as follows. 
If $u= v+w$ and $u=\tilde{v}+\tilde{w}$ are two such decompositions, 
then $v-\tilde{v}=\tilde{w}-w$. 
Hence, if $v_0:=v-\tilde{v}\ne 0$, 
then $0 \ne v_0 \in V(0)$ and $[v_0]=0$. 
It follows from
\eqref{EQ_703} and \eqref{EQ_704}
that $\widehat{v_0}(\gamma)=0$ for all $\gamma$. 
Since $\mathscr F:V(0)\to\mathcal B$
is an isomorphism, we obtain $v_0=0$,
a contradiction. 
\medskip

With this definition, the  relation between
\[
C_c^\infty(\R) \quad \text{and} \quad \mathfrak{D}(\mathsf{D})
\]
is not clear. Indeed, for $u \in C_c^\infty(\R)$ 
decomposed as $u=v+w$ with $v \in V(0)$ and $[w]=0$, 
it is not clear whether $\mathscr{F}v \in \mathfrak{D}(M)$ holds.

\subsection{Infinitesimal generator of translations ($a=\infty$)} \label{section_0531}

The previously defined family $(\tau_t)_{t \in \R}$ forms a strongly continuous one-parameter unitary group. 
Indeed, $\tau_{t}\tau_{s}=\tau_{t+s}$ is clear from the definition, 
and each $\tau_t$ is a unitary operator by \eqref{EQ_702}. 
It remains to verify strong continuity, namely 
$\lim_{t \to 0} \|\tau_t [v] - [v]\|_{A_\infty} = 0$. 
To this end, using the isometric property of $U$, we estimate
\[
\| U(\tau_t [v] - [v]) \|_{\mathcal{B}}^2 
= \sum_{\gamma \in \Gamma} m_\gamma |e^{-it\gamma}-1|^2 |\widehat{v}(\gamma)|^2.
\]
Since $A_\infty>0$ implies RH,
all elements of $\Gamma$ are real. 
Since $|e^{-it\gamma}-1|^2\le4$,
the dominated convergence theorem implies that the right-hand side tends to $0$ as $t \to 0$. 
Hence $\tau_t$ is strongly continuous with respect to the norm of $\mathcal{H}(A_\infty)$.

Therefore, 
there exists a self-adjoint operator $\overline{\mathscr{D}}$ on $\mathcal{H}(A_\infty)$ such that
\[
\tau_t = e^{it\overline{\mathscr{D}}} \quad (t \in \R)
\]
by Stone's theorem, where 
\[
\mathfrak D(\overline{\mathscr D})
=
\left\{
[v]\in\mathcal H_\infty \left|~
\lim_{\epsilon\to0}
\frac{\tau_\epsilon[v]-[v]}{\epsilon}
\text{ exists}
\right.
\right\},
\quad 
\overline{\mathscr{D}}[v]=-i\lim_{\epsilon \to 0} \frac{\tau_\epsilon [v]-[v]}{\epsilon}.
\]
For $[v] \in \mathfrak{D}(\mathsf{D})$, 
we have 
\[
\overline{\mathscr{D}}[v] = [iv'] = \mathsf{D}[v]
\]
by considering the action on the $\mathcal{B}$ side. 
Thus, the self-adjoint operator
$\overline{\mathscr D}$
extends the closed symmetric operator
$\mathsf D$,
whose deficiency indices are $(1,1)$.
Moreover,
\[
U(\overline{\mathscr{D}}[v]) = M_{\pi/2} U([v])
\]
holds, that is,
$
\overline{\D} =
\mathsf{D}_{\pi/2}.
$
Indeed,
\[
\aligned 
(U\overline{\mathscr{D}}[v])(z) 
&=-i\lim_{\epsilon \to 0} 
\frac{(U\tau_\epsilon [v])(z)-(U[v])(z)}{\epsilon} \\
&=-i\lim_{\epsilon \to 0} 
\sum_{\gamma \in \Gamma} \sqrt{m_\gamma}\, 
\frac{e^{i\gamma \epsilon}-1}{\epsilon} \, \widehat{v}(\gamma)  F_\gamma \\
& = \sum_{\gamma \in \Gamma} \sqrt{m_\gamma}\, \gamma \widehat{v}(\gamma)  F_\gamma 
= (M_\psi U[v])(z). 
\endaligned 
\]
Viewed on $\mathcal H(A_\infty)$, 
$\overline{\mathscr{D}}$ is characterized as 
the unique self-adjoint extension of $\mathsf{D}$ that commutes with the (usual) translation. 
Moreover, the equality $\overline{\D} = \mathsf{D}_{\pi/2}$ asserts that 
a Hilbert--{P{\'o}lya operator is realized as the infinitesimal generator of the translation group 
under the assumption $A_\infty>0$ (RH).

\medskip

Since $\overline{\mathscr{D}}$ acts as $v \mapsto iv'$ on $C_c^\infty(-a,a)$, 
to approximate the behavior as $a \to \infty$, 
it is natural to consider the minimal operator $\D_a$ for each $a>0$. 
Considering the self-adjoint extensions $\mathscr{D}_{a, \theta}$ of $\mathscr{D}_a$, it is expected that by choosing $\theta=\theta(a)$ appropriately,
\[
\overline{\D}_{a, \theta}~\to~\overline{\mathscr{D}} \quad (a \to \infty)
\]
in the sense of strong resolvent convergence.

\subsection{Operator picture via embedding}

We denote by $\mathcal{H}(A_a)$ the completion of $C_c^\infty(-a,a)$ with respect to 
$(Q_W|_{L^2(-a,a)})(v)
=Q_W^{\,a}(v)=\langle A_a v, v \rangle_{L^2}$. 
Since $U(v)\neq0$ for every nonzero $v\in C_c^\infty(-a,a)$, 
the space $C_c^\infty(-a,a)$ embeds injectively into $\mathcal{H}(A_\infty)$ via $v \mapsto [v]$ 
as in the case of $C_c^\infty(\mathbb{R})$. 
This embedding extends to an injective map
\[
\mathcal{H}(A_a) ~\hookrightarrow~\mathcal{H}(A_\infty).
\]
Through this embedding, we may regard $\D_a$ and $\overline{\D}_{a,\theta}$ as operators on $\mathcal{H}(A_\infty)$, 
although they are no longer densely defined in $\mathcal H(A_\infty)$. 
However, we still have
\[
\D_a = \left. \D_\infty\right|_{\mathcal{H}(A_a)}
\]
\[
\bar{\D}_\infty~\subset~\mathsf{D}_{\pi/2}=\overline{\D}
\]
and thus, as $a \to \infty$, it is natural to expect that $\overline{\D}_{a,\theta}$ approximates the self-adjoint extension $\mathsf{D}_{\pi/2}=\overline{\D}$ of $\bar{\D}_\infty$. 
Since the spectrum of $\mathsf{D}_{\pi/2}$ is given by $\Gamma$, one further expects that 
the eigenvalues of $\overline{\D}_{a,\theta}$ approximate $\Gamma$ as $a \to \infty$. 

\subsection{Factorization of $A_\infty$}

By \eqref{EQ_704},
\[
Q_W(v)=
\langle A_\infty v, v \rangle_{L^2}
=
\langle Uv, Uv \rangle_{L^2}
=
\langle U^\ast Uv, v \rangle_{L^2}.
\]
Using the notation of \cite{Su25} and the factorization 
$U = \pi^{-1/2}\widehat{\mathcal{P}} D$, we obtain  
\[
A_\infty = U^\ast U
=
\pi^{-1}D^\ast\widehat{\mathcal{P}}^\ast\widehat{\mathcal{P}}D,
\quad
G = \pi^{-1}\widehat{\mathcal{P}}^\ast\widehat{\mathcal{P}}
\]
under RH. 
Conversely, if one could show unconditionally that the kernel of $U^\ast U$
coincides with $-g''(x-y)$, or equivalently that the kernel of
$\pi^{-1}\widehat{\mathcal{P}}^\ast\widehat{\mathcal{P}}$
coincides with the kernel
$g(x-y)-g(x)-g(-y)+g(0)$ of $G$,
then it would follow that $A_\infty>0$, and hence RH would follow immediately.
More precisely, using the notation of \cite{Su25}, one has unconditionally
\[
(Uv)(z)
=
\pi^{-1/2}\int_{-\infty}^{\infty}
\overline{\mathfrak{S}_x(\bar{z})}\,v'(x)\,dx,
\]
and therefore RH would follow if one could establish unconditionally that
\[
\int_{-\infty}^{\infty}
\mathfrak{S}_x(z)\overline{\mathfrak{S}_y(z)}\,dz
=
g(x-y)-g(x)-g(-y)+g(0).
\]
We note that, as in \cite[(1.5), (1.6)]{Su25},
the function $\mathfrak{S}_x(z)$ can also be represented without reference
to $\Gamma$, just as $g$ can.

\subsection{Heuristics for formula \eqref{EQ_112}}

Formally, we derive \eqref{EQ_112} by following the argument in
Section~\ref{section_6} for $\mathcal{H}(A_\infty)$ under the assumption of RH.

Let $K(w,z)$ denote the reproducing kernel of $\mathcal{B}$, namely,
$\langle F, K(w,\cdot) \rangle_{\mathcal{B}}=F(w)$.
Then
\[
K(w,z)= \frac{E(z)\overline{E(w)}-E^\sharp(z)\overline{E^\sharp(w)}}{2\pi i(\bar{w}-z)},
\quad
E(z)=\xi(1/2-iz)+\xi'(1/2-iz),
\]
since $\mathcal{B}$ is the de Branges space generated by $E$ (\cite[Theorem 1.1]{Su25}).
The function $K(w,\cdot)$ is an eigenfunction of $M^\ast$
corresponding to the eigenvalue $\bar{w}$.
Indeed, for every $F\in\mathfrak D(M)$, we have
\[
\langle MF,K(w,\cdot)\rangle_{\mathcal{B}}
=
wF(w)
=
w\langle F,K(w,\cdot)\rangle_{\mathcal{B}}
=
\langle F,\bar{w}K(w,\cdot)\rangle_{\mathcal{B}}.
\]
Since $\mathfrak D(M)$ is dense in $\mathcal B$
(\cite[Section~6]{Su25}),
it follows that
$K(w,\cdot)\in\mathfrak D(M^\ast)$ and
$M^\ast K(w,\cdot)=\bar wK(w,\cdot)$.
In particular, $K(\mp i,\cdot)$ belong to the corresponding deficiency spaces ${\rm Ker}(M^\ast\mp i)$.
Furthermore,
\[
\|K(\mp i,\cdot)\|_{\mathcal{B}}^2
=
\langle K(\mp i,\cdot),K(\mp i,\cdot)\rangle_{\mathcal{B}}
=
K(\mp i,\mp i)
=
\xi(3/2)\xi'(3/2)/\pi.
\]
Set
$W_\theta(t):=K(-i,t)+e^{i\theta}K(i,t)$. 
For the boundary form
$W(F,G)=\langle M^\ast F,G\rangle_{\mathcal{B}}
-\langle F,M^\ast G\rangle_{\mathcal{B}}$, we obtain 
\[
\aligned
W(K(\bar{z},\cdot),W_\theta)
&=(z+i)\langle K(\bar{z},\cdot),K(-i,\cdot)\rangle_{\mathcal{B}}
+e^{-i\theta}(z-i)\langle K(\bar{z},\cdot),K(i,\cdot)\rangle_{\mathcal{B}}\\
&=(z+i)K(\bar{z},-i)
+e^{-i\theta}(z-i)K(\bar{z},i)
=
-\frac{e^{-i\theta/2}}{\pi i}
\bigl(
C_\theta E(z)-\overline{C_\theta}E^\sharp(z)
\bigr),
\endaligned
\]
where
$C_\theta=\xi(3/2)\cos(\theta/2)
+i\xi'(3/2)\sin(\theta/2)$.
In particular, when $\theta=\pi$, the right-hand side becomes 
\(
(2i/\pi)\,
\xi'(3/2)\xi(1/2-iz)
\). 

Let $f_z$ be the function satisfying
$\widehat{f_z}(t)=K(\bar z,t)/E(t)$.
Then $f_z$ belongs to a certain subspace $V(0)$ of
$L^2(0,\infty)$ (\cite[Lemma~5.1]{Su25}), and
\[
\aligned 
K(\bar{z},\mp i) 
 = \langle K(\bar{z},\cdot), K(\mp i,\cdot) \rangle_{\mathcal{B}} 
= \overline{\langle K(\mp i,\cdot), K(\bar{z},\cdot)  \rangle_{\mathcal{B}}}  
 = \overline{K(\mp i,\bar{z})}=E^\sharp(z)\overline{\widehat{f_{\pm i}}(\bar{z})}.
\endaligned 
\]
On the other hand,
\[
\aligned 
\langle K(\bar{z},\cdot), K(\mp i,\cdot) \rangle_{\mathcal{B}} 
 = \langle K(\bar{z},\cdot)/E, K(\mp i,\cdot)/E \rangle_{L^2} 
 = 2\pi \langle f_{z}, f_{\pm i} \rangle_{L^2}
= \pi \langle f_{z}, f_{\pm i} \rangle_{A_\infty},
\endaligned 
\]
where the last equality follows from
\cite[Theorem~5.5]{Su25}.
Therefore,
\[
\aligned 
-\frac{e^{-i\theta/2}}{\pi i}(C_\theta E(z)-\overline{C_\theta} E^\sharp(z))
& = W(K(\bar{z},\cdot),W_\theta) \\
& = \pi (z + i) \langle f_{z}, f_{+i} \rangle_{A_\infty}
+\pi e^{-i\theta}  (z - i)  \langle f_{z}, f_{-i} \rangle_{A_\infty}  \\
& = \pi E^\sharp(z) (z + i) \overline{\widehat{f_{+i}}(\bar{z})}
+\pi E^\sharp(z) e^{-i\theta}  (z - i)  \overline{\widehat{f_{-i}}(\bar{z})} .
\endaligned 
\]
Hence, 
\[
(z - i) \widehat{f_{+i}}(z)
+ e^{i\theta}  (z + i) \widehat{f_{-i}}(z)
=
\frac{ie^{i\theta/2}}{\pi^2}(C_\theta-\overline{C_\theta} \frac{E^\sharp (z)}{E(z)}). 
\]
In particular, when $\theta=\pi$,
\[
(z - i) \widehat{f_{+i}}(z)
- (z + i) \widehat{f_{-i}}(z)
=
\frac{2\xi'(3/2)}{\pi^2 i}\frac{\xi(1/2-iz)}{E(z)}.
\]
Since
$\|f_{+i}\|_{A_\infty}=\|f_{-i}\|_{A_\infty}$
by definition,
this identity strongly suggests \eqref{EQ_112}.

Furthermore, if $v\in V(0)$, then 
\[
\langle v, f_{z} \rangle_{A_\infty}
= 2 \langle v, f_{z} \rangle_{L^2}
= \frac{1}{\pi} \langle \widehat{v}, \widehat{f_{z}} \rangle_{L^2}
= \frac{1}{\pi} \langle E \widehat{v}, K(\bar{z}, \cdot) \rangle_{\mathcal{B}}
=\frac{1}{\pi} E(\bar{z})\widehat{v}(\bar{z}).
\]
Hence, the analogue in $\mathcal{H}(A_\infty)$ of the vector $v_z$ in Section~\ref{section_6} 
is given by
\[
v_z:=\frac{\pi f_z}{E(\bar z)}.
\]

%
\section{From distribution kernels to continuous kernels} \label{section_8}
%

\subsection{Passing from $A_a$ to $G_a$}

We have developed the theory of $Q_W^{\,a}$ and $A_a$
using the screw function associated with $\zeta(s)$.
However, in the theory of screw functions, as presented 
for example in \cite[Section 5]{KrLa14},
it is more natural to consider the integral operator
$\widetilde{G}$ defined by
\begin{equation} \label{EQ_801}
(\widetilde{G}u)(x):=\int_{-\infty}^{\infty}(g(x-y)-g(x)-g(-y)+g(0))u(y)\,dy
\end{equation}
rather than the integral operator $G$ in \eqref{EQ_104}.
As shown in \cite[Theorem 1.3]{Su23}, if
$Q_{\widetilde{G}}(v):= \langle \widetilde{G}v, v \rangle_{L^2}$
denotes the quadratic form associated with $\widetilde{G}$,
then the condition $Q_{\widetilde{G}}(u) \geq 0$
for all $u \in C_c^\infty(\R)\cap L_0^2(-a,a)$
is equivalent to RH.
This follows from the relation
$Q_W(v)=Q_{\widetilde{G}}(Dv)$, $v \in C_c^\infty(\R)$ 
\cite[Proposition 3.1]{Su23}.
Note that if $u=Dv$ with $v \in C_c^\infty(\R)$,
then $u \in L_0^2(-a,a)$.
If $Q_G(v):= \langle Gv, v \rangle_{L^2}$
denotes the quadratic form associated with the integral operator $G$
in \eqref{EQ_104},
then $Q_{\widetilde{G}}(u)=Q_G(u)$ for every
$u \in L_0^2(-a,a)$,
and hence
$Q_W(v)=Q_{\widetilde{G}}(Dv)=Q_G(Dv)$.
Motivated by this relation, we chose to study $Q_W$
through the quadratic form $Q_G$,
whose kernel is considerably simpler.

\medskip

To represent $Q_W$ directly on the finite interval $(-a,a)$,
one must use the operator $A_a$ as in \eqref{EQ_101}.
However, if one is interested only in the positivity of $Q_W$,
there is no essential difference in working with $G_a$,
since $Q_W(v)=Q_G(Dv)$ and the restriction of $Q_G$ to $(-a,a)$
is represented by the operator $G_a$ through
$(Q_G|_{L^2(-a,a)})(u)=\langle G_a u, u\rangle_{L^2}$.
Moreover, $G_a$ is an integral operator with a continuous kernel
and is therefore much easier to handle than the unbounded operator $A_a$.
As we shall see below,
the Hilbert space obtained by completing with respect to
$\langle A_a u,u\rangle_{L^2}$
and the Hilbert space obtained by completing with respect to
$\langle G_a u,u\rangle_{L^2}$
are related through the identity
$Q_W(v)=Q_G(Dv)$,
and Theorem~\ref{thm_5}
can be interpreted within the framework of $G_a$.

Although $A_a$ and $G_a$ are very different as operators,
the former being an unbounded operator with discrete spectrum
accumulating at $+\infty$
and the latter being a compact operator with discrete spectrum
accumulating at $0$,
the situation is analogous from the viewpoint of positivity of quadratic forms,
since in both cases the relevant quantity is the bottom of the spectrum.
(There is, of course, the distinction that the infimum may or may not be attained.)
Furthermore, as we shall see below,
this difference does not affect the spectrum of the corresponding
self-adjoint extensions of the differential operator.

\medskip

To discuss these matters, we first review the relation between
the norms arising through the differential operator $D$.

\subsection{The Inverse Neumann Laplacian} \label{section_8_2}

The operator
$D:H_0^1(-a,a) \to L_0^2(-a,a)$
is bijective.
Therefore, if $u=Dv$ with $v \in H_0^1(-a,a)$, then $v=D^{-1}u$, and hence
\begin{equation*}
\|v\|_{L^2}^2
=
\langle D^{-1}u, D^{-1}u \rangle_{L^2}
=
\langle {(DD^{\ast})}^{-1}u, u \rangle_{L^2}.
\end{equation*}
Note that $-\Delta_N=DD^\ast$ coincides with the Neumann Laplacian, 
which is the operator acting as
$-\Delta_N=-d^2/dx^2$ with domain
$\mathfrak{D}(-\Delta_N)=\{ u \in H^2(-a,a) \mid u'(a)=u'(-a)=0\}$,
and its range is $L_0^2(-a,a)$.
Its inverse
$(-\Delta_N)^{-1}:L_0^2(-a,a) \to L^2(-a,a)$
is a compact, self-adjoint, and positive integral operator
with kernel
\[
N(x,y)=\frac{x^2 + y^2}{4a} - \frac{|x - y|}{2} + \frac{a}{6}.
\]
On the other hand, if $u=Dv$ with $v \in H_0^1(-a,a)$, then
$
v(x) = \int_{-a}^{x}u(t)dt,
$
and therefore
\[
\aligned
\|v\|_{L^2}^2
&= \int_{-a}^a |v(x)|^2 dx
 = \int_{-a}^a
\left( \int_{-a}^{a} u(t)\mathbf{1}_{t \leq x} dt \right)
\left( \int_{-a}^{a} \overline{u(s)} \mathbf{1}_{s \leq x}ds \right) dt \\
&= \int_{-a}^a \int_{-a}^a \left( \int_{\max(t, s)}^a dx \right) u(t) \overline{u(s)} dt ds
= \int_{-a}^a \int_{-a}^a (a- \max(t, s)) u(t) \overline{u(s)} dt ds.
\endaligned
\]
Accordingly, defining
\[
(\widetilde{K}_a u)(x) : = \int_{-a}^a (a- \max(x, y)) u(y) dy,
\]
we obtain
\[
\|v\|_{L^2}^2 = \langle \widetilde{K}_a u, u \rangle_{L^2}.
\]
The operator $\widetilde{K}_a$ is compact, self-adjoint, and positive.
Moreover, the identity
\begin{equation} \label{EQ_802}
\|v\|_{L^2}^2
=
\langle \widetilde{K}_a u, u \rangle_{L^2}
=
\langle (-\Delta_N)^{-1}u,u \rangle_{L^2}
\end{equation}
holds as an equality of quadratic forms on $L_0^2(-a,a)$.
Since the operator-theoretic properties of $(-\Delta_N)^{-1}$
are more transparent, we shall henceforth write
\[
K_a:=(-\Delta_N)^{-1},
\quad \text{with integral kernel }
N(x,y)=\frac{x^2 + y^2}{4a} - \frac{|x - y|}{2} + \frac{a}{6},
\]
and work with $K_a$ rather than $\widetilde{K}_a$.

\subsection{The Hilbert Space $\mathcal H(S_a)$}

Let $\lambda <\lambda_a$.
Motivated by \eqref{EQ_802} and by the operator
$T_a=A_a-\lambda I$ considered earlier, we define
\[
S_a:= G_a - \lambda (-\Delta_N)^{-1} 
\]
on $C_c^\infty(-a,a)\cap L_0^2(-a,a)$. 
Since both $G_a$ and $(-\Delta_N)^{-1}$ are self-adjoint with respect to the
$L^2$ inner product, so is $S_a$, and
\[
\| u \|_{S_a}^2
= \langle S_a u, u\rangle_{L^2}
= \langle T_a v, v\rangle_{L^2}
= \| v \|_{T_a}^2,
\quad u=Dv \in C_c^\infty(-a,a)\cap L_0^2(-a,a).
\]
Furthermore, 
\[
\mathcal{H}(S_a)
:=
\Bigl[\text{the completion of $C_c^\infty(-a,a)\cap L_0^2(-a,a)$
with respect to $\|u\|_{S_a}$}\Bigr]
\]
is isometrically isomorphic to
$\mathcal{H}(T_a)$, 
since
$D: C_c^\infty(-a,a) \to C_c^\infty(-a,a)\cap L_0^2(-a,a)$
is bijective. 
The operator $D$ therefore extends to an isometric isomorphism
\[
\bar{D}:\mathcal{H}(T_a)~\overset{\sim}{\to}~\mathcal{H}(S_a).
\] 
Note that
$\mathcal{H}(T_a) \hookrightarrow L^2(-a,a)$,
whereas
$\mathcal{H}(S_a) \not\subset L^2(-a,a)$. 
It should also be noted that
$\mathbf{1}_{[-a,a]} \in \mathcal{H}(T_a) \setminus \mathfrak{D}(\D_a)$,
while
$\bar{D}(\mathbf{1}_{[-a,a]})\not=0$.  
Through the isomorphism $\bar{D}$,
operators on $\mathcal{H}(T_a)$ can be transferred to operators on
$\mathcal{H}(S_a)$. 
Accordingly, we define an operator $\widetilde{\D}_a$ on
$\mathcal{H}(S_a)$ by
\[
\widetilde{\D}_a := D \D_a D^{-1}
\]
with domain
$C_c^\infty(-a,a)\cap L_0^2(-a,a)$. 
For $u \in C_c^\infty(-a,a)\cap L_0^2(-a,a)$,
\[
(D^{-1}u)(x) = -i \int_{-a}^{x} u(t) \, dt,
\quad
D \D_a(D^{-1}u)(x) = iu'(x),
\]
and hence $\widetilde{\D}_a$ may be regarded as the operator
with domain
$C_c^\infty(-a,a)\cap L_0^2(-a,a)$
acting by
$\widetilde{\D}_a=i\,d/dx$.
By construction,
$\widetilde{\D}_a$ on $\mathcal{H}(S_a)$ is unitarily equivalent to
$\D_a$ on $\mathcal{H}(T_a)$.
Therefore, $\widetilde{\D}_a$ has deficiency indices $(1,1)$,
and its self-adjoint extensions have the same spectra as those of
$\D_a$.

We extend the operator $S_a$ to $\mathcal{H}(S_a)$ 
via the unitary isomorphism $\bar{D}$, namely,
\[
S_a = \bar{D} \,T_a (\bar{D})^{-1}, 
\]
where $S_a$ denotes the self-adjoint operator associated with 
the closed symmetric quadratic form induced by
$G_a-\lambda(-\Delta_N)^{-1}$. 
Then, $\D_a^\ast v_z=zv_z$
is equivalent to
$\widetilde{\D}_a^\ast u_z=zu_z$
for
$u_z:=\bar Dv_z$,
since $\D_a^\ast=\bar D^{-1}\widetilde{\D}_a^\ast\bar D$. 
We define the corresponding boundary form
\[
\widetilde{W}(u, v):=
\langle \widetilde{\D}_a^\ast u, v \rangle_{S_a}
-\langle  u, \widetilde{\D}_a^\ast v \rangle_{S_a}
\]
in analogy with \eqref{EQ_601}. Setting
$u_\pm:=\bar Dv_\pm$
and
$\widetilde w_\theta:=u_+ + e^{i\theta}u_-$,
we obtain
\begin{equation} \label{eq_260711_3}
\widetilde{W}(u_z, \widetilde{w}_\theta)
= (z + i) \langle u_z, u_+ \rangle_{S_a}
+ e^{-i\theta}(z - i)  \langle u_z, u_- \rangle_{S_a}.
\end{equation}
Since $\bar D$ is unitary, it follows from \eqref{EQ_603} that
\[
\langle u_z, u_\pm \rangle_{S_a}
=\langle \bar{D}v_z, \bar{D}v_\pm \rangle_{S_a}
=\langle v_z, v_\pm \rangle_{T_a}=\overline{\widehat{v_\pm}(\bar{z})}.
\]
Hence, by \eqref{EQ_111}, 
$W(a,\theta;z)
= \overline{W(v_{\bar{z}}, w_\theta)}
= \overline{\widetilde{W}(u_{\bar{z}}, \widetilde{w}_\theta)}$. 
Thus the boundary forms determining the eigenvalues 
coincide for $\widetilde{\D}_a^\ast$ and $\D_a^\ast$. 
For $e_z(x):=\exp(-izx)$,
the equation $T_av_z=e_z$ becomes $S_au_z=\bar De_z$. 
\medskip

Let $k(x,y)=g(x-y)-\lambda N(x,y)$.
Then the equations $T_av_\pm=C_\pm e_{\pm i}$ may be written as
\begin{equation} \label{eq_260725_1}
\int_{-a}^{a}(-k_{xx}(x,y))v_\pm(y)\,dy
=
C_\pm e^{\pm x},
\qquad
x\in(-a,a).
\end{equation}
The equations \eqref{eq_260725_1} are formally equivalent to
\begin{equation} \label{eq_260725_2}
\int_{-a}^{a}k(x,y)(-v_\pm(y))\,dy
=
C_\pm e^{\pm x}+A_\pm x+B_\pm,
\qquad
x\in(-a,a)
\end{equation}
in the sense that differentiating both sides twice with respect to $x$ yields \eqref{eq_260725_1}, 
where
\[
A_\pm
=
\int_{-a}^{a}k_x(0,y)(-v_\pm(y))\,dy
\mp C_\pm,
\qquad
B_\pm
=
\int_{-a}^{a}k(0,y)(-v_\pm(y))\,dy
-C_\pm.
\]
Here we avoid expressing these equations in terms of the operators $T_a$ or $S_a$, 
since the above argument ignores domain issues. 
It should also be noted that the equation \eqref{eq_260725_2} 
is different from $S_au_\pm=C_\pm\bar De_{\pm i}$.

Since the kernel in \eqref{eq_260725_2} is continuous, 
it is an ordinary Fredholm integral equation of the first kind. 
This observation suggests a numerical approach.
One may compute $v_\pm(a,x)$ by solving this Fredholm equation numerically and 
then test experimentally whether \eqref{EQ_112} holds. 
Our numerical experiments provide evidence supporting \eqref{EQ_112} 
for the choice $\theta=\pi$.

\subsection{Relation to Bombieri's Problem 1} \label{Bombieri_Problem_1}

Using $\widetilde{G}$ in \eqref{EQ_801}
and $P_a$ in \eqref{EQ_201}, define
\[
\widetilde{G}_a:=P_a \widetilde{G}P_a : L_0^2(-a,a) \to L_0^2(-a,a).
\]
Recall that $G_a=P_aG P_a$ in \eqref{EQ_105},
and $B_a=D^\ast G_a D$ in \eqref{EQ_106}.
Then
\begin{equation} \label{EQ_803}
\langle \widetilde{G}_a Dv, Dv \rangle_{L^2}
=
\langle G_a Dv, Dv \rangle_{L^2}
=
\langle B_a v, v \rangle_{L^2}
=
Q_W(v),
\quad v \in C_c^\infty(-a,a),
\end{equation}
see \cite[Proposition 3.1]{Su23}.
For finite $a$, the norm on $H_0^1(-a,a)$ defined by
$\|v\|_{L^2}+\|v'\|_{L^2}$ is equivalent to that defined by
$\|v'\|_{L^2}$.
Adopting the latter, the linear map
$H_0^1(-a,a)\to L_0^2(-a,a)$ given by $v\mapsto Dv$
becomes an isometric isomorphism.
Hence the Rayleigh quotient
$Q_W(v)/\|v\|_{H^1}^2$
on $H_0^1(-a,a)$ appearing in
\cite[Problem 1]{Bo01}
(and \cite[Problem B]{Bo03})
can be rewritten as the Rayleigh quotient on $L_0^2(-a,a)$
\[
\frac{Q_W(v)}{\|v\|_{H^1}^2}
=
\frac{Q_G(u)}{\|u\|_{L^2}^2},
\qquad
u=Dv,
\]
by \cite[Proposition 3.1]{Su23}.
In other words,
\cite[Problem 1]{Bo01}
(and \cite[Problem B]{Bo03})
is nothing but the eigenvalue problem for
$G_a=P_aGP_a=P_a\widetilde{G}P_a$
on $L_0^2(-a,a)$.
\cite[Theorem 4]{Bo01} asserts that if this infimum is negative,
then it is attained in $L^2(-a,a)$. 
From the viewpoint of the eigenvalue problem for $G_a$,
this is immediate.
\medskip

Since $Q_W$ is bounded with respect to the $H^1$-norm,
the Riesz representation theorem implies that it is represented
by a bounded operator.
That operator is precisely $\widetilde{G}_a=G_a$
(they coincide on $L_0^2(-a,a)$).
Problem~1 in \cite{Bo01} corresponds to this formulation.

\subsection{Reformulation as a generalized eigenvalue problem}

By \eqref{EQ_802},
\[
\|v\|_{L^2}^2
=
\langle K_a u,u \rangle_{L^2}
\]
with $u=Dv$ and $K_a=(-\Delta_N)^{-1}$.
Combining this with \eqref{EQ_803},
the Rayleigh quotient in \eqref{EQ_107} can be rewritten as
\begin{equation} \label{EQ_804}
\frac{Q_W^{\,a}(v)}{\|v\|_{L^2}^2}
=
\frac{\langle G_a u, u \rangle_{L^2}}
{\langle K_a u, u \rangle_{L^2}}
\end{equation}
for $v \in H_0^1(-a,a)$ and $u=Dv \in L_0^2(-a,a)$.
The right-hand side of \eqref{EQ_804}
leads to the generalized eigenvalue problem
\[
G_a u=\lambda K_a u,
\qquad
u \in \mathcal{H}(S_a).
\]
The spectrum of this generalized eigenvalue problem
coincides with that of $A_a$.
Hence it is discrete, bounded from below,
and accumulates only at $+\infty$.
(The case $\lambda=0$ is exceptional:
then the contribution of $K_a$ disappears,
and one is simply looking at the kernel
(the $0$-eigenspace) of $G_a$.)

Although $A_a$ is unbounded,
both $G_a$ and $K_a$ are compact operators
with continuous integral kernels.
In the generalized eigenvalue problem,
one must analyze not only the spectra of $G_a$ and $K_a$
individually but also the interaction between them.
Thus it cannot be said a priori that the spectral analysis of $A_a$
becomes simpler.
Nevertheless, the availability of compact-operator techniques
may provide certain advantages.

\bigskip

\noindent
{\bf Acknowledgment} 
\medskip

\noindent
This work was supported by JSPS KAKENHI Grant Number 
JP23K03050. 
This work was also supported by the Research Institute for Mathematical Sciences, 
an International Joint Usage/Research Center located in Kyoto University.

%

%
\bigskip 

\noindent
Department of Mathematics, \\
School of Science, \\ 
Institute of Science Tokyo \\
2-12-1 Ookayama, Meguro-ku, \\
Tokyo 152-8551, Japan  \\[2pt]
Email: {\tt msuzuki@math.sci.isct.ac.jp}


\begin{thebibliography}{99}
%
\bibitem{Bo01}
E. Bombieri,
\newblock{Remarks on Weil's quadratic functional in the theory of prime numbers. I}, 
\newblock{\it Atti Accad. Naz. Lincei Cl. Sci. Fis. Mat. Natur. Rend. Lincei (9) Mat. Appl.} 
\newblock{{\bf 11} (2000), no. 3, 183--233 (2001)}. 
%
\bibitem{Bo03}
E. Bombieri,
\newblock{A variational approach to the explicit formula}, 
\newblock{\it Comm. Pure Appl. Math. } 
\newblock{{\bf 56} (2003), no. 8, 1151--1164}. 
%
\bibitem{CC23}
A. Connes, C. Consani, 
\newblock{Spectral triples and $\zeta$-cycles}, 
\newblock{\it Enseign. Math.} 
\newblock{{\bf 69} (2023), no. 1-2, 93--148}. 
%
\bibitem{CCM25}
A. Connes, C. Consani, H. Moscovici, 
\newblock{Zeta spectral triples}, 
\newblock{\url{https://arxiv.org/abs/2511.22755}} 
%
\bibitem{FOT11}
M. Fukushima, Y. Oshima, M. Takeda, 
\newblock{Dirichlet forms and symmetric Markov processes}, 
\newblock{Second revised and extended edition, De Gruyter Studies in Mathematics, {\bf 19}}, 
\newblock{\it Walter de Gruyter \& Co., Berlin}, 
\newblock{2011}. 
%
\bibitem{GrRy07}
I. S. Gradshteyn, I. M. Ryzhik, 
\newblock{Table of integrals, series, and products. Seventh edition}, 
\newblock{\it Elsevier/Academic Press, Amsterdam}, 
\newblock{2007}. 
%
\bibitem{Ka95}
T. Kato, 
\newblock{Perturbation theory for linear operators. Reprint of the 1980 edition. Classics in Mathematics}, 
\newblock{\it Springer-Verlag, Berlin}, 
\newblock{1995}. 
%
\bibitem{KrLa14}
M. G. Kre\u{\i}n, H. Langer, 
\newblock{Continuation of hermitian positive definite functions and related questions}, 
\newblock{\it Integral Equations Operator Theory}  
\newblock{{\bf 78} (2014), no. 1, 1--69}. 
%
\bibitem{LSV09}
D. Lenz, P. Stollmann, I. Veseli\'{c}, 
\newblock{The Allegretto-Piepenbrink theorem for strongly local Dirichlet forms}, 
\newblock{\it Doc. Math.}  
\newblock{{\bf 14} (2009), 167--189}. 
%
\bibitem{NIST}
F. Olver, D. Lozier, R. Boisvert, C. Clark (eds), 
\newblock{NIST handbook of mathematical functions}, 
\newblock{\it Cambridge University Press, Cambridge}, 
\newblock{2010}. 
%
\bibitem{RS75}
M. Reed, B. Simon, 
\newblock{Methods of modern mathematical physics. II. Fourier analysis, self-adjointness}, 
\newblock{\it Academic Press [Harcourt Brace Jovanovich, Publishers], New York-London}, 
\newblock{1975}. 
%
\bibitem{Sch12}
K. Schm\"{u}dgen,
\newblock{Unbounded self-adjoint operators on Hilbert space}, 
\newblock{Graduate Texts in Mathematics {\bf 265}}, 
\newblock{\it Springer, Dordrecht}, 
\newblock{2012}. 
%
\bibitem{Su23} 
M. Suzuki,
\newblock{Aspects of the screw function corresponding to the Riemann zeta function}, 
\newblock{\it J. Lond. Math. Soc.}  
\newblock{{\bf 108} (2023), no.4, 1448-1487}. 
%
\bibitem{Su25}
M. Suzuki,
\newblock{On the Hilbert space derived from the Weil distribution}, 
\newblock{\it Canadian Journal of Mathematics}  
\newblock{(FirstView)}. 
%
\bibitem{We52}
A. Weil,
\newblock{Sur les ``formules explicites'' de la th\'{e}orie des nombres
              premiers}, 
\newblock{\it Comm. S\'{e}m. Math. Univ. Lund [Medd. Lunds Univ. Mat. Sem.]},  
\newblock{{\bf 1952} (1952), Tome Suppl\'{e}mentaire, 252--265}. 
%
\bibitem{We72}
A. Weil,
\newblock{Sur les formules explicites de la th\'{e}orie des nombres}, 
\newblock{\it Izv. Akad. Nauk SSSR Ser. Mat.},  
\newblock{{\bf 36} (1972), 3--18.}. 
%
\bibitem{Yo92}
H. Yoshida,
\newblock{On Hermitian forms attached to zeta functions}, 
\newblock{\it Zeta functions in geometry (Tokyo, 1990)}, 
\newblock{281--325, Adv. Stud. Pure Math., 21}, 
\newblock{\it Kinokuniya, Tokyo}, 
\newblock{1992}.
%
\end{thebibliography}
\end{document}